\documentclass[a4paper,12pt]{article}
\usepackage{amsmath}
\usepackage{amssymb}
\usepackage{setspace}
\usepackage{fullpage}
\usepackage{bbm}
\usepackage{amsthm}
\usepackage[latin1]{inputenc}
\usepackage{xcolor}
\usepackage{hyperref}

\newtheorem{theorem}{Theorem}[section]
\newtheorem{lemma}[theorem]{Lemma}

\newtheorem{corollary}[theorem]{Corollary}

\theoremstyle{definition}
\newtheorem{construction}[theorem]{Construction}
\newtheorem{definition}[theorem]{Definition}

\newtheorem{remark}[theorem]{Remark}

\usepackage{pdfsync}
\def\PG{\mathrm{PG}}

\def\PGammaL{\mathrm{P}\Gamma\mathrm{L}}
\def\PGL{\mathrm{PGL}}

\def\A{\mathcal{A}} \def\B{\mathcal{B}} \def\C{\mathcal{C}}
\def\D{\mathcal{D}}  
 \def\K{\mathcal{K}}
\def\L{\mathcal{L}}

 \def\S{\mathcal{S}}
\def\T{\mathrm{Tr}}

\def\F{\mathbb{F}}

\def\a{\alpha}

\newcommand{\comments}[1]{}

%\hyphenation{mdeboeck gvdvoorde}

\title{A linear set view on KM-arcs}
%Two infinite families of $(0,2,q/4)$-arcs in $\PG(2,q)$
%iets met $i$-clubs?
\date{}
\author{Maarten De Boeck \thanks{This author is supported by the BOF-UGent (Special Research Fund of Ghent University).\newline UGent, Department of Mathematics, Krijgslaan 281 -- S22, 9000 Gent, Flanders, Belgium.\newline Email: mdeboeck@cage.ugent.be} \and Geertrui Van de Voorde \thanks{This author is a postdoctoral fellow of the Research Foundation Flanders (FWO -- Vlaanderen).\newline UGent, Department of Mathematics, Krijgslaan 281 -- S22, 9000 Gent, Flanders, Belgium.\newline Email: gvdvoorde@cage.ugent.be}}
%\\
%Department of Mathematics, Krijgslaan 281 -- S22, 9000 Gent, Flanders, Belgium. 

\begin{document}
\maketitle
\begin{abstract} In this paper, we study KM-arcs of type $t$, i.e. point sets of size $q+t$ in $\PG(2,q)$ such that every line contains $0$, $2$ or $t$ of its points. We use field reduction to give a different point of view on the class of {\em translation} arcs. Starting from a particular $\F_2$-linear set, called an {\em $i$-club}, we reconstruct the {\em projective triads}, the {\em translation hyperovals} as well as the translation arcs constructed by Korchm\'aros-Mazzocca, G\'acs-Weiner and Limbupasiriporn. We show the KM-arcs of type $q/4$ recently constructed by Vandendriessche are translation arcs and fit in this family.

Finally, we construct a family of KM-arcs of type $q/4$. We show that this family, apart from new examples that are not translation KM-arcs, contains all translation KM-arcs of type $q/4$. \end{abstract}
{\bf Keywords:} KM-arc, $(0,2,t)$-arc, set of even type, translation arc\\
{\bf MSC 2010 codes:} 51E20, 51E21

\section{Introduction}

\subsection{KM-arcs}
Point sets in $\PG(2,q)$, the Desarguesian projective plane of over the finite field $\F_q$ of order $q$, that have few different intersections sizes with lines have been a research subject throughout the last decades. A point set $\S$ of \emph{type} $(i_{1},\dots,i_{m})$ in $\PG(2,q)$ is a point set such that for every line in $\PG(2,q)$ the intersection size $\ell\cap\S$ equals $i_{j}$ for some $j$ and such that each value $i_{j}$ occurs as intersection size for some line. In \cite{mig} point sets of type $(0,2,q/2)$ of size $\frac{3q}{2}$ were studied. This led to the following generalisation by Korchmáros and Mazzocca in \cite{km}. 

\begin{definition}
  A \emph{KM-arc of type $t$} in $\PG(2,q)$ is a point set of type $(0,2,t)$ with size $q+t$. A line containing $i$ of its points is called an $i$-secant.
\end{definition}

Originally these KM-arcs were denoted as $(q+t)$-arcs of type $(0,2,t)$ \cite{km} or `$(q+t,t)$-arcs of type $(0,2,t)$' \cite{gw} but in honour of Korchmáros and Mazzocca we denote them by KM-arcs. The following results were obtained in \cite[Theorem 2.5]{gw} and \cite[Proposition 2.1]{km}.

\begin{theorem}
  If $\mathcal{A}$ is a KM-arc of type $t$ in $\PG(2,q)$, $2<t<q$, then
  \begin{itemize}
    \item $q$ is even;
    \item $t$ is a divisor of $q$;
    \item there are $\frac{q}{t}+1$ different $t$-secants to $\mathcal{A}$, and they are concurrent.
  \end{itemize}
\end{theorem}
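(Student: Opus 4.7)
The plan is to prove the three assertions in turn by elementary incidence counting, exploiting $2 < t < q$ at every step.

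For the divisibility and secant count I would first fix a point $P \in \A$. Every line through $P$ already contains $P$, so it is either a $2$- or a $t$-secant; writing $a$, $b$ for these two counts, the two relations
\[
a + b = q + 1, \qquad a + b(t-1) = |\A| - 1 = q + t - 1
\]
give $b(t-2) = t-2$, hence $b = 1$ since $t > 2$. So every point of $\A$ lies on a unique $t$-secant, and double counting flags (point of $\A$, $t$-secant through it) yields $t\tau = q + t$, where $\tau$ is the number of $t$-secants. Hence $t \mid q$ and $\tau = q/t + 1$.

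For $q$ even, I argue by contradiction: if $q$ is odd then $t$ is odd too. For any $P \notin \A$ the identity $2 a_2(P) + t\, a_t(P) = q + t$ reduced mod $2$ forces $a_t(P)$ to be even. Fixing a $t$-secant $\ell$, every $P \in \ell \setminus \A$ therefore satisfies $a_t(P) \geq 2$, while each of the other $\tau - 1 = q/t$ $t$-secants meets $\ell$ in a single point, necessarily outside $\A$ (else that point would lie on two $t$-secants). Summing gives
\[
q/t = \sum_{P \in \ell \setminus \A}(a_t(P) - 1) \geq |\ell \setminus \A| = q + 1 - t,
\]
which rearranges to $q \leq t$, contradicting $t < q$.

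For concurrency, pick two $t$-secants $\ell_1, \ell_2$ and set $R = \ell_1 \cap \ell_2$; then $R \notin \A$. Let $\ell$ be any further $t$-secant, and suppose for contradiction $R \notin \ell$. For each of the $t$ points $P \in \ell \cap \A$ the line $PR$ differs from the unique $t$-secant $\ell$ through $P$, so $PR$ is a $2$-secant; varying $P$ gives $t$ distinct $2$-secants through $R$. Combined with $2 a_2(R) + t\, a_t(R) = q + t$ this forces $a_t(R) \leq q/t - 1 = \tau - 2$. The remaining step is to upgrade this local bound to a genuine contradiction, by analysing the $t$-regular multigraph on the $\tau - a_t(R)$ $t$-secants avoiding $R$ whose edges are the $2$-secants through $R$, and re-running the bound at an intersection point of two $t$-secants outside $R$; I expect this rigidity step to be the main obstacle, while the first two bullets amount to soft counting.
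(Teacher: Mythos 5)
Your first two bullets are fine, and in fact the paper itself does not prove this theorem at all: it quotes it from \cite[Proposition 2.1]{km} and \cite[Theorem 2.5]{gw}, so the comparison is with those cited arguments. Your counting at a point of $\mathcal{A}$ (giving that each point of $\mathcal{A}$ lies on exactly one $t$-secant, whence $t\mid q$ and $\tau=q/t+1$) and your parity argument for $q$ even (if $q,t$ were odd, every point of a $t$-secant $\ell$ outside $\mathcal{A}$ would carry an even, hence $\geq 2$, number of $t$-secants, and summing $a_t(P)-1$ over $\ell\setminus\mathcal{A}$ gives $q/t\geq q+1-t$, i.e. $q\leq t$) are both correct and complete; this is the standard route and matches the content of the Korchm\'aros--Mazzocca counting.

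The genuine gap is the third bullet, and you have named it yourself: for $R=\ell_1\cap\ell_2$ and a $t$-secant $\ell$ with $R\notin\ell$ you only obtain $a_2(R)\geq t$, hence $a_t(R)\leq \tau-2$, and the promised ``rigidity step'' is exactly the missing proof. Worse, this line of attack cannot be closed by re-running the same local count elsewhere. At any point $R\notin\mathcal{A}$ with $s=\tau-a_t(R)\geq 1$ secants missing it, every point of $\mathcal{A}$ on a missing secant is joined to $R$ by a $2$-secant whose second point lies on a \emph{different} missing secant, so $2a_2(R)=st$ and the $2$-secants through $R$ form a $t$-regular loopless multigraph on the $s$ missing secants. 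The only information this yields is $s\neq 1$ (your bound $a_t(R)\leq\tau-2$ restated); for every $s\geq 2$ such a multigraph exists (e.g.\ $t$ parallel edges when $s=2$), and the identity $2a_2(R)+t\,a_t(R)=q+t$ is automatically satisfied, so no contradiction can emerge from iterating this first-moment/incidence count at other intersection points: a hypothetical configuration in which the $t$-secants behave like a dual arc, meeting pairwise in distinct points, passes all of these local tests. Ruling it out is precisely the substance of \cite[Theorem 2.5]{gw}, and it requires a genuinely global argument about the arc, not a refinement of the bound at $R$. As it stands, your proposal proves the first two bullets but not the concurrency of the $t$-secants, which is the deepest part of the statement.
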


If $\A$ is a KM-arc of type $t$, then the point contained in all $t$-secants to $\A$ is called the \emph{$t$-nucleus} of $\A$.

\begin{definition}
  A point set $\S$ in $\PG(2,q)$ is a called a \emph{translation set} with respect to the line $\ell$ if the group of elations with axis $\ell$ fixing $\S$ acts transitively on the points of $\S\setminus\ell$; the line $\ell$ is called the \emph{translation line}. If a KM-arc is a translation set, then it is called a {\em translation KM-arc}.
\end{definition}

\begin{theorem}[{\cite[Proposition 6.2]{km}}]
  If $\S\subset\PG(2,q)$ is a translation KM-arc of type $t$ with respect to the line $\ell$, then $\ell$ is a $t$-secant to $\S$.
\end{theorem}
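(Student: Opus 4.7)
The plan is to use that the elation group with axis $\ell$ is a $2$-group in even characteristic, and combine this with orbit-stabilizer to force a divisibility constraint on $|\S\setminus\ell|$ that collapses to only one possibility for $|\ell\cap\S|$.

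More precisely, by the previous theorem (of Korchm\'aros--Mazzocca / G\'acs--Weiner) we may assume $q$ is even and $t\mid q$, and by the type hypothesis $|\ell\cap\S|\in\{0,2,t\}$. The full group $E_\ell$ of elations of $\PG(2,q)$ with axis $\ell$ is isomorphic to the translation group of the affine plane $\PG(2,q)\setminus\ell$; in coordinates with $\ell$ the line at infinity, these are the maps $(x,y)\mapsto(x+a,y+b)$. So $E_\ell\cong(\F_q^2,+)$ is an elementary abelian $2$-group of order $q^2$. The translation group $G\leq E_\ell$ stabilising $\S$ is therefore also a $2$-group, and as $G$ acts transitively on $\S\setminus\ell$, orbit-stabiliser forces $|\S\setminus\ell|$ to divide $|G|$ and hence to be a power of $2$.

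Writing $q=2^a$ and $t=2^b$ with $1\leq b\leq a$, I would then go through the three possible values of $|\ell\cap\S|$.
\begin{itemize}
\item If $|\ell\cap\S|=0$, then $|\S\setminus\ell|=q+t=2^b(2^{a-b}+1)$, which is a power of $2$ only if $a=b$, i.e.\ $t=q$; excluded.
\item If $|\ell\cap\S|=2$, then $|\S\setminus\ell|=q+t-2=2(2^{a-1}+2^{b-1}-1)$; for $b\geq 2$ the bracketed factor is an odd integer $\geq 3$, so this is not a power of $2$. The remaining case $b=1$ is $t=2$, excluded by the standing hypothesis $t>2$ (and trivially fine if one admits $t=2$, since every $2$-secant is then a $t$-secant).
\item Hence $|\ell\cap\S|=t$, proving the statement.
\end{itemize}

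There is essentially no obstacle: the entire argument rests on the observation that in characteristic $2$ the elation group with a given axis is elementary abelian of $2$-power order, so that the orbit of $G$ on $\S\setminus\ell$ must have $2$-power length, which is incompatible with the sizes $q+t$ and $q+t-2$ under $2<t<q$. If difficulty arises, it will only be bookkeeping the $t=2$ edge case, which one can dispatch in a single line by noting that a translation hyperoval's translation line meets it in $2=t$ points by definition of a secant.
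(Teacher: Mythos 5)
Your proof is correct, but there is nothing in this paper to compare it against: the statement is imported verbatim from Korchm\'aros--Mazzocca (\cite[Proposition 6.2]{km}) and no proof is reproduced here, so your argument serves as an independent, self-contained verification. The route is clean: since $q$ is even (by the theorem quoted in the introduction, valid under the standing hypothesis $2<t<q$), the full elation group with axis $\ell$ is elementary abelian of order $q^2$, hence the stabiliser $G$ of $\S$ inside it is a $2$-group, and transitivity on $\S\setminus\ell$ forces $|\S\setminus\ell|$ to be a power of $2$; as $|\ell\cap\S|\in\{0,2,t\}$, the values $q+t=2^b(2^{a-b}+1)$ and $q+t-2=2(2^{a-1}+2^{b-1}-1)$ are excluded for $2<t<q$, leaving $|\ell\cap\S|=t$. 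Two points should be made explicit. First, state that you work under $2<t<q$: this is where ``$q$ even and $t\mid q$'' comes from, and the restriction $t<q$ is not cosmetic --- for $t=q$ the claim genuinely fails, since in even characteristic the symmetric difference of two lines is a KM-arc of type $q$ whose affine part, with respect to a $0$-secant through the common point of the two lines, is $\F_q\times\{0,c\}$, a subgroup of $(\F_q^2,+)$, so it is a translation set whose translation line is a $0$-secant; your case analysis excludes exactly this via $t<q$, which is fine, but say so. Second, the orbit--stabiliser step needs $\S\setminus\ell\neq\emptyset$, which holds because $|\S|=q+t>q+1$ cannot fit on a single line; this deserves a one-line remark. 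With those additions the argument is complete.
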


The main questions in the study of the KM-arcs are the following: \emph{for which values of $q$ and $t$ does a KM-arc of type $t$ in $\PG(2,q)$ exist?} and \emph{which nonequivalent KM-arcs of type $t$ in $\PG(2,q)$ exist for given admissable $q$ and $t$?} We give a survey of the known results in Table \ref{overviewKMarcs}.

\begin{table}[ht]
  \begin{tabular}{c|c|c|c|c}
    
     $q$ & $t$ & Condition & Comments & Reference \\\hline\hline
    %$2^{h}$ & $2^{i}$ & $h-i\mid h$ & construction based on $\T:\F_{2^{h}}\to\F_{2^{h-i}}$ & \cite[Sect. 5]{km}\\
    %& & & and an o-polynomial $g$ over $\F_{2^{h-i}}$, & \\
    %& & & translation iff $g(z)=z^{2^{d}}$ with $(d,h-i)=1$ & \\
      $2^{h}$ & $2^{i}$ & $h-i\mid h$ & see Sections \ref{kmrevisited}, \ref{gwrevisited} & \cite[Constr. 3.4(1))]{gw}\\
    
     & & & generalising \cite[Sect. 5]{km} \\\hline
      $2^{h}$ & $2^{i+1}$ & $h-i\mid h$ & see Section \ref{gwrevisited} & \cite[Constr. 3.4(2))]{gw}\\\hline
      $2^{h}$ & $2^{i+m}$ & $h-i\mid h$, a KM-arc & see Section \ref{gwrevisited} & \cite[Constr. 3.4(3))]{gw}\\
       & & of type $2^{m}$ in & & \\
      & & $\PG(2,2^{h-i})$ exists & & \\\hline
      $2^{h}$ & $2^{h-2}$ & $h\geq3$ &  translation KM-arcs & \cite[Sect. 5]{vdd}\\
     & & & &  Sections \ref{newfamily} and \ref{VDD2}\\\hline
     $2^{h}$ & $2^{h-2}$ & $h\geq3$ &  \textbf{new construction} & Section \ref{genervdd}\\
   \hline
     32 & 4 & & see Section \ref{computer} & \cite{kmm} \\\hline
     32 & 8 & &  see Section \ref{computer} & \cite{lim}\\\hline
  
     64 & 8 & &  see Section \ref{computer} & \cite{lim} \\\hline
  \end{tabular}
  \caption{An overview of the known KM-arcs}
  \label{overviewKMarcs}
\end{table}

In this article we will use linear sets to study these problems. We will describe a new family of KM-arcs of type $q/4$, and look at known KM-arcs from this point of view. It was noted a few years ago that KM-arcs together with their $t$-nucleus determine $\F_{2}$-linear sets on each of their $t$-secants, however they are not $\F_{2}$-linear sets themselves. Vandendriessche conjectured in a lecture \cite{petervicenza} that this is always the case.

In the second half of this section we recall the basic information on field reduction and linear sets. In Section \ref{sec:translationKMarcs} we discuss translation KM-arcs. We introduce $i$-clubs and discuss their relationship with KM-arcs. We will prove a characterisation theorem of translation KM-arcs using these $i$-clubs, discuss KM-arcs of type $q/2$ and translation hyperovals from this perspective and describe a family of type $q/4$ using this setting. In Section \ref{sec:revisiting} we will discuss the known KM-arcs from the point of view of linear sets and show that the family of Vandendriessche is a family of translation KM-arcs which can be constructed as in Section \ref{sec:translationKMarcs}. In Section \ref{genervdd} we present a new family of KM-arcs of type $q/4$, including the family of Vandendriessche as well as many examples of non-translation arcs. We end by showing that every translation KM-arc of type $q/4$ is a member of this new family.

\subsection{Linear sets and field reduction}

A {\em $(t-1)$-spread} $\S$ of $\PG(n-1,q)$ is a partition of the point set of $\PG(n-1,q)$ into subspaces of dimension $(t-1)$. It is a classic result of Segre that a $(t-1)$-spread of $\PG(n-1,q)$ can only exist if $t$ divides $n$. The construction of a {\em Desarguesian spread} that follows shows the well-known fact that this condition is also sufficient.

A Desarguesian $(t-1)$-spread of $\PG(rt-1,q)$ can be obtained by applying {\em field reduction} to the points of $\PG(r-1,q^t)$. The underlying vector space of the projective space $\PG(r-1,q^t)$ is $V(r,q^t)$; if we consider $V(r,q^t)$ as a vector space over $\F_q$, then it has dimension $rt$, so it defines a $\PG(rt-1,q)$. In this way, every point $P$ of $\PG(r-1,q^t)$ corresponds to a subspace of $\PG(rt-1,q)$ of dimension $(t-1)$ and it is not hard to see that this set of $(t-1)$-spaces forms a spread of $\PG(rt-1,q)$, which is called a Desarguesian spread. If $U$ is a subset of $\PG(rt-1,q)$, and $\D$ a Desarguesian $(t-1)$-spread, then we define $\B(U):=\{R \in \D \mid U \cap R \neq \emptyset\}$. In this paper, we consider the Desarguesian spread $\D$ as fixed and we identify the elements of $\B(U)$ with their corresponding points of $\PG(r-1, q^t)$. 

Linear sets can be defined in several equivalent ways, but using the terminology introduced here, an \emph{$\F_q$-linear set $S$ of rank $h$} in $\PG(r-1,q^{t})$ is a set of points such that $S=\B(\mu)$, where $\mu$ is an $(h-1)$-dimensional subspace of $\PG(rt-1,q)$. The {\em weight} of a point $P=\B(p)$ of a linear set $\B(\mu)$ equals $\dim(\mu\cap p)+1$.
Following \cite{michel}, a {\em club of rank $h$} is a linear set $\S$ of rank $h$ such that one point of $\S$ has weight $h-1$ and all others have weight $1$.
We define an {\em $i$-club of rank $h$} as a linear set $\mathcal{C}$ of rank $h$ such that one point, called the {\em head} of $\mathcal{C}$ has weight $i$ and all others have weight $1$. With this definition, a usual club of rank $h$ is an $(h-1)$-club. A $1$-club is a {\em scattered linear set}, which is defined to be a linear set of which all points have weight one. We see that the head of a $1$-club is not defined. Note that an $\F_q$-linear $i$-club of rank $h$ in $\PG(1,q^t)$ has size $q^{h-1}+q^{h-2}+\dots+q^{i}+1$. For more information on field reduction and linear sets, we refer to \cite{FQ11}.

\section{Translation KM-arcs and \texorpdfstring{$i$}{i}-clubs}\label{sec:translationKMarcs}

\subsection{A geometric construction for translation arcs}

Let $\D$ be the Desarguesian $(h-1)$-spread in $\PG(3h-1,2)$ corresponding to $\PG(2,2^h)$, let $\ell_\infty$ be the line at infinity of $\PG(2,2^h)$ and let $H$ be the $(2h-1)$-space such that $\ell_\infty=\B(H)$. The points of $\PG(2,2^h)$ that are not on $\ell_\infty$ are the {\em affine} points. 

\begin{theorem}\label{translation}
  Let $\mu$ be an $(h-1)$-space in $H$ such that $\B(\mu)$ is an $i$-club $\C$ of rank $h$ with head $N$ in $\ell_\infty$, and let $\rho\in\D$ be the spread element such that $\B(\rho)=N$. Let $\pi$ be an $h$-space meeting $H$ exactly in $\mu$. Then the point set $\B(\pi)\setminus\C$ together with the points of $\ell_\infty\setminus\C$ forms a translation KM-arc of type $2^i$ in $\PG(2,2^h)$ with axis $\ell_\infty$ and with $2^{i}$-nucleus $N$.
\end{theorem}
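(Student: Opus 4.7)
The plan is to set up coordinates via the identification $V(3h,2)\cong V(3,2^h)$, then analyse the linear set $\B(\pi)$ carefully, and finally exhibit the translation group as the stabilizer of $\pi$ inside the elation group. I will take $H=\{x_3=0\}$, $\mu\subset H$ corresponding to an $\F_2$-subspace $U_\mu\subseteq\F_{2^h}^2$ of dimension $h$, and write $\pi=\mu\oplus\F_2\cdot(a_1,a_2,1)$ for some $a_1,a_2\in\F_{2^h}$, which is the most general $h$-space with $\pi\cap H=\mu$ (up to choice of representative).

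The key lemma I would establish first is that every affine spread element in $\B(\pi)$ has weight $1$. The argument is purely linear algebra: every vector of $\pi$ has $x_3\in\{0,1\}$; if an affine $\sigma\in\B(\pi)$ contained two distinct nonzero vectors $v,w\in\sigma\cap\pi$, then either both have $x_3=0$ (so $v,w\in\mu\subset H$, contradicting $\sigma$ affine) or at least one pair among $v,w,v+w$ has $x_3=0$ and is nonzero (again forcing $\sigma$ to meet $H$). Combined with the count $|\pi\setminus\mu|=2^h$, this immediately gives $|\B(\pi)\setminus\C|=2^h$, while the $i$-club formula gives $|\ell_\infty\setminus\C|=2^i$, so $|\A|=2^h+2^i$ as required.

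Next I would carry out the intersection analysis. For a line $\ell\neq\ell_\infty$ with corresponding $(2h-1)$-space $L\supset\tau$ (where $\tau$ is the spread element of $P=\ell\cap\ell_\infty$), the identity $L\cap\mu=\tau\cap\mu$ reduces everything to understanding $L\cap\pi$. Splitting $L\cap\pi$ into its $x_3=0$ part (which is $\tau\cap\mu$) and its $x_3=1$ part (which is empty or a coset of $L\cap\mu$) yields four cases: when $P\notin\C$ one always gets $|\ell\cap\A|=2$; when $P\in\C\setminus\{N\}$ one gets $0$ or $2$; when $P=N$ one gets $0$ or $2^i$ (because the coset then has size $2^i$, and by the weight-$1$ lemma these $2^i$ affine vectors lie in $2^i$ distinct affine spread elements). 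For $\ell=\ell_\infty$ one trivially gets $2^i$. Hence $\A$ has type $(0,2,2^i)$, all $2^i$-secants pass through $N$, and $N$ is the $2^i$-nucleus.

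Finally, for the translation property I would look at the subgroup $T_\pi$ of elations $t_{b_1,b_2}:(x_1,x_2,x_3)\mapsto(x_1+b_1x_3,x_2+b_2x_3,x_3)$ that stabilize $\pi$. A direct check shows $t_{b_1,b_2}(\pi)=\pi$ iff $(b_1,b_2)\in U_\mu$, so $|T_\pi|=2^h$. Since every elation with axis $\ell_\infty$ fixes $\C\subset\ell_\infty$ pointwise and preserves $\D$, the group $T_\pi$ fixes $\A$ setwise; since nontrivial elations have no affine fixed points, $T_\pi$ acts freely, hence transitively, on the $2^h$ affine points of $\A$. This gives $\A$ the structure of a translation KM-arc with axis $\ell_\infty$. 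The main technical obstacle is really just the weight-$1$ lemma for affine spread elements, since once that is in hand all the line-count cases reduce to elementary coset arithmetic inside $L\cap\pi$.
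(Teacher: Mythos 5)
Your proposal is correct and follows essentially the same route as the paper: the same key observation that an affine spread element meets $\pi$ in at most one point (your weight-one lemma), the same case split according to whether $L\cap H$ is $\rho$, another element of $\B(\mu)$, or an element off $\B(\mu)$ (with the Grassmann dimension count supplying the single affine point in the last case, which you should make explicit to justify the word ``always''), and the same idea of realising the translations as elations with axis $\ell_\infty$ stabilising $\pi$. The only difference is presentational: you work in explicit $\F_{2^h}$-coordinates and deduce transitivity from the free action of the order-$2^h$ group $T_\pi$, whereas the paper constructs, for each pair of affine points, the elation of $\langle H,\pi\rangle$ with axis $H$ mapping one preimage to the other.
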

\begin{proof}
  We denote $(\B(\pi)\setminus\C)\cup(\ell_\infty\setminus\C)$ by $\A$. %WAAROM STAAT DIT HIER ZO, ZIE IK IETS OVER HET HOOFD?  First, note that for all points $q$ of $\pi\setminus H$, $\B(q)\cap \pi=\{q\}$ and $|\B(\pi)\setminus\C|=2^{h}$. Also note that $\mu\setminus\rho$ contains $(2^{h}-1)-(2^{i}-1)$ points; by the requirement in the theorem those belong to different spread elements. 
  As $\pi$ is an $h$-space that meets $H$, which is spanned by spread elements, in an $(h-1)$-space, a spread element that meets $\pi\setminus \mu$ non-trivially,  meets it in a point. Consequently, $\A$ has $2^h$ affine points. The size of $\C=\B(\mu)$ is $2^{h-1}+\dots+2^i+1$, which implies that $\A$ contains $(2^h+1)-(2^{h-1}+\dots+2^i+1)=2^i$ points of $\ell_\infty$. So in total, $\A$ has $2^h+2^i$ points.%  Hence, $|\ell_\infty\setminus\C|=2^{i}$ and thus $|\A|=2^{h}+2^{i}$. Moreover, $|\A\cap\ell_{\infty}|=2^{i}$
  \par Let $\ell$ be a line in $\PG(2,2^h)$ different from $\ell_{\infty}$, and let $L$ be the $(2h-1)$-space in $\PG(3h-1,2)$ such that $\ell=\B(L)$. If $L\cap H=\rho$, then $L$ contains the $(i-1)$-space $\mu\cap\rho$. Since $L$ contains no other points of $H$ than the points of $\rho$, either $L\cap\pi$ is an $i$-space, or else $L\cap\pi$ equals the $(i-1)$-space $\mu\cap\rho$. In the former case $|\A\cap\ell|=2^{i}$, in the latter case $\ell$ contains no points of $\mathcal{A}$.
  \par If $L\cap H$ is a spread element different from $\rho$, then $L$ meets $\mu$ in a point or $L\cap\mu=\emptyset$. In the former case $\ell$ has no point in common with $\ell_\infty\setminus\C$, and $L$ meets meet $\pi$ in a line or a point, so $\ell\cap(\B(\pi)\setminus\C)$ equals $0$ or $2$. In the latter case $\ell$ has one point in common with $\ell_\infty\setminus\C$, and $L$ meets meet $\pi$ in a point by the Grassmann identity, so $|\ell\cap(\B(\pi)\setminus\C)|$ equals $1$. Consequently, all lines meet $\A$ in $0$, $2$ or $2^{i}$ points, and all lines that meet it in $2^{i}$ points, pass through $N$; $\A$ is a KM-arc of type $2^i$ with $2^i$-nucleus $N$.
 
 We now prove that $\A$ is a translation KM-arc with axis $\ell_{\infty}$. Let $P_{1}$ and $P_{2}$ be two points of $\mathcal{A}\setminus\ell_{\infty}$, and let $Q_{1},Q_{2}\in(\pi\setminus\mu)$ be the points such that $\B(Q_{1})=P_{1}$ and $\B(Q_{2})=P_{2}$. We consider the elation $\gamma$ in the $(2h)$-space $\left\langle H,\pi\right\rangle$ with axis $H$, that maps $Q_{1}$ on $Q_{2}$. This elation induces an elation $\overline{\gamma}$ of $\PG(2,2^{h})$ with axis $\B(H)=\ell_{\infty}$. Note that $\pi$ is fixed by $\gamma$, and hence $\B(\pi)$ is fixed by $\overline{\gamma}$. So $\mathcal{A}$ is fixed by $\overline{\gamma}$. Since $Q^{\gamma}_{1}=Q_{2}$, $P^{\overline{\gamma}}_{1}=P_{2}$. 
\end{proof}

\begin{theorem}\label{translationinverse}
  Every translation KM-arc of type $2^i$ in $\PG(2,2^h)$ can be constructed as in Theorem \ref{translation}.
\end{theorem}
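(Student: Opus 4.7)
The plan is to recover the data $\mu$ and $\pi$ of Theorem~\ref{translation} intrinsically from the translation group of $\A$. After a projectivity we may assume that the translation line is the fixed line $\ell_\infty$; this is a $2^i$-secant of $\A$ by the theorem of Korchm\'aros--Mazzocca cited above. Let $T$ be the translation group with axis $\ell_\infty$; then $|T|=2^h$, since $T$ acts regularly on $\A\setminus\ell_\infty$. Each $\tau\in T$ lifts canonically to the unique $\F_{2^h}$-linear map $\tilde\tau$ of $V(3h,2)$ that fixes $H$ pointwise, and this lift also induces the identity on the quotient $V(3h,2)/H$, so $\tilde\tau(v)-v\in H$ for every $v$. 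Fix $A\in\A\setminus\ell_\infty$ and a representative $a\in V(3h,2)$. The evaluation map $\mathrm{ev}_a\colon\tilde T\to H$, $\tilde\tau\mapsto\tilde\tau(a)-a$, is $\F_2$-linear (direct computation, using that each $\tilde\tau$ fixes $H$ pointwise) and injective (since an elation with axis $\ell_\infty$ fixing an affine point is trivial). I would take $\mu$ to be its image, an $h$-dimensional $\F_2$-subspace of $H$, and set $\pi:=\mu+\langle a\rangle_{\F_2}$, so that $\pi$ has $\F_2$-dimension $h+1$ and $\pi\cap H=\mu$.

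Write $\C:=\B(\mu)$. The set $\pi\setminus\mu$ is the coset $\mu+a=\{\tilde\tau(a):\tilde\tau\in\tilde T\}$, and because $T$ acts regularly on $\A\setminus\ell_\infty$ these $2^h$ vectors lie in $2^h$ distinct spread elements of $\D$, each contributing exactly one point to $\B(\pi)\setminus\C$. Thus $\B(\pi)\setminus\C=\A\setminus\ell_\infty$, and it remains to show that $\C$ is an $i$-club of rank $h$ with head the $2^i$-nucleus $N$ of $\A$, and that $\ell_\infty\setminus\C=\A\cap\ell_\infty$.

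The key step is the weight analysis on $\ell_\infty$. For $P\in\ell_\infty$ with spread element $R_P$, the modular law applied to the $(2h-1)$-space $L$ corresponding to the line $AP$ (using $a\in L$) yields $L\cap\pi=(R_P\cap\mu)+\langle a\rangle_{\F_2}$. Hence the number of affine points of $\A$ on $AP$ equals $|R_P\cap\mu|=2^{w(P)}$, where $w(P)$ denotes the weight of $P$ in $\C$ (with $w(P)=0$ meaning $P\notin\C$). Since $|AP\cap\A|\in\{2,2^i\}$ and equals $2^{w(P)}$ or $2^{w(P)}+1$ according as $P\notin\A$ or $P\in\A$, a short case analysis forces $w(P)\in\{0,1,i\}$ with the correspondence: $w(P)=0$ iff $P\in\A\cap\ell_\infty$; $w(P)=1$ iff $AP$ is a $2$-secant with $P\notin\A$; $w(P)=i$ iff $AP$ is a $2^i$-secant with $P\notin\A$. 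Since the only $2^i$-secant through the affine point $A$ is $AN$, exactly one point of $\C$ (namely $N$) has weight $i$ and the remaining points of $\C$ have weight $1$, so $\C$ is an $i$-club of rank $h$ with head $N$. Finally $\A\cap\ell_\infty\subseteq\ell_\infty\setminus\C$ by the $w=0$ case, and the cardinalities $|\A\cap\ell_\infty|=2^i=|\ell_\infty\setminus\C|$ coincide, so the two sets are equal and $\A$ is precisely the KM-arc produced by Theorem~\ref{translation} from $(\mu,\pi)$.

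The main obstacle is the weight analysis: translating the combinatorial KM-arc line conditions into the precise statement that the weights on $\C$ form the pattern $\{1,i\}$ with a single weight-$i$ point coinciding with the $2^i$-nucleus. The remainder of the argument is linear algebra together with careful bookkeeping between the $\F_{2^h}$- and $\F_2$-vector space structures on $V(3h,2)$.
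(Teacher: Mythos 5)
Your proof is correct, but it follows a genuinely different route from the paper. The paper's proof is coordinate-based: it invokes Korchm\'aros--Mazzocca's Proposition 6.3, which says the affine points of a translation KM-arc can be written as $(f(t),t,1)$ with $f$ an additive ($\F_2$-linearised) map, then carries out field reduction explicitly with respect to a normal basis, writes down the $2h-1$ equations cutting out the $h$-space $\pi$, and finally identifies the $i$-club structure of $\B(\mu)$ via the direction-set viewpoint (the points of the arc on $\ell_\infty$ are exactly the non-determined directions of the graph of $f$, and the nucleus $(0,1,0)$ lying on all $2^i$-secants forces the corresponding spread element to meet $\mu$ in an $(i-1)$-space). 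You instead recover $\mu$ and $\pi$ intrinsically: you lift the translation group to the transvection-like linear maps fixing $H$ pointwise, take $\mu$ as the image of the orbit/evaluation map at a fixed affine point $A$, and then read off the weights of the points of $\B(\mu)$ from the intersection sizes $|AP\cap\A|$ via $L\cap\pi=(R_P\cap\mu)+\langle a\rangle_{\F_2}$. This avoids any appeal to the additive-polynomial description and to a choice of basis, and it makes transparent why the head of the club is the $2^i$-nucleus; the paper's computation, in exchange, produces explicit coordinates for $\pi$ and connects the result to the classical machinery of directions determined by a function, which is reused elsewhere in the literature. Two small points you leave implicit: the fact that $AN$ is indeed a $2^i$-secant (so that $N$ really has weight $i$) needs the concurrency of the $t$-secants at $N$ together with $N\notin\A$, both available for $2<t<q$ from the preliminaries; and in the degenerate case $i=1$ the head/nucleus is undefined, but there your weight analysis simply shows all weights equal $1$, i.e.\ $\B(\mu)$ is scattered, which is what the construction requires in that case.
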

\begin{proof}
  From \cite[Proposition 6.3]{km}, we know that if $\A$ is a translation KM-arc of type $t$ in $\PG(2,q),q=2^h$ with translation line $Z=0$, and $(0,1,0)$ as $t$-nucleus, then the affine points of $\A$ can be written as $(f(t),t,1)$ where $f(z)=\sum_{i=0}^{h-1}\alpha_i z^{2^i}$ with $\alpha_i\in \F_2$.
  \par Now let $\{\omega, \omega^2,\omega^{2^2},\ldots,\omega^{2^{h-1}}\}$ be a normal basis for $\F_{2^h}$ over $\F_2$ and consider field reduction with respect to this basis, i.e. we let the vector $(u,v,w)$ of $\F_{2^h}^3$ correspond to the vector $(u_0,\ldots,u_{h-1};v_0,\ldots,v_{h-1};w_0,\ldots,w_{h-1})$ of $\F_{2}^{3h}$, where $u=\sum_{i=0}^{h-1}u_i\omega^{2^i}$, $v=\sum_{i=0}^{h-1}v_i\omega^{2^i}$ and $w=\sum_{i=0}^{h-1}w_i\omega^{2^i}$. 
  \par Write $1=\sum_{i=0}^{h-1}a_i \omega^{2^i}$, and let $k\in \{0,\dots,h-1\}$ be an index for which $a_k=1$. Let $t\in \F_{2h}=\sum_{i=0}^{h-1}t_i\omega^{2^i}$, then $t^{2^j}=\sum_{i=0}^{h-1}t_{h-j+i}\omega^{2^i}$, where the indices are taken modulo $h$.% we define $t_k=t_{h+k}$ if $k<0$.
  We see that $f(t)=\sum_{j=0}^{h-1}(\sum_{i=0}^{h-1}\alpha_jt_{h-i+j}\omega^{2^i})$, again with the indices taken modulo $h$.
  \par This implies that every point $(f(t),t,1)$, $t\in \F_{2^h}$ is defined by a vector of $\F_{2^{3h}}$ corresponding to a point of $\PG(3h-1,2)$ that belongs to the $h$-dimensional subspace $\pi$ defined by the $2h-1$ equations:
  \begin{align*}
    X_{i}&=\sum_{j=i}^{h-1}\alpha_{j}X_{h-i+j}+\sum_{j=0}^{i-1}\alpha_{j}X_{2h-i+j}, i\in \{0,\dots,h-1\}\\
    X_{2h+j}&=a_jX_{2h+k}, j\in \{0,\dots,h-1\}, j\neq k.
  \end{align*}
  The intersection of $\pi$ with the $(2h-1)$-space $H$ corresponding to the line $z=0$, defined by $X_{2h}=X_{2h+1}=\ldots=X_{3h-1}=0$ satisfies one extra equation, namely $X_{2h+k}=0$, hence, $\pi$ meets $H$ in an $(h-1)$-dimensional space $\mu$.
  \par Since $f$ is an $\F_2$-linear map, the set of directions determined by the set $\{(f(t),t,1)\mid t\in \F_{2^h}\}$ equals $\{(f(z),z,0)| z\in \F_{2^h}\}$. %We only need to show that if $\A$ is a translation KM-arc, then $\{(f(z),z,0)| z\in \F_{2^h}\}$ defines an $i$-club.
  \par If $\A$ is a KM-arc with affine part $\A'$ then it is clear that the set of points of the KM-arc of type $2^i$ on the line at infinity is exactly the set of non-determined directions by $\A'$. The size of this set is $2^i$, which shows that the set $\A'$ determines $2^h-2^i+1$ directions and that $|\B(\mu)|=2^h-2^i+1$. Since we know that the point $(0,1,0)$ lies on all $2^i$-secants to the affine part of $\A$, determined by $\B(\pi)\setminus \B(\mu)$, we obtain that the spread element corresponding to $(0,1,0)$ meets $\mu$ in an $(i-1)$-space. Consequently, all other spread elements that meet $\mu$, meet it in a point, and $\B(\mu)$ is an $i$-club.
\end{proof}

\subsection{The case \texorpdfstring{$i=h-1$}{i=h-1} and projective triads}
A {\em projective triad $\mathcal{T}$} of side $n$ in $\PG(2,q)$ is a set of $3n-2$ points, $n$ on each of $3$ concurrent lines $\ell_0,\ell_1,\ell_2$, with $\ell_0\cap \ell_1\cap \ell_2=\{P\}$ such that a line through a point $R_0\neq P $ of $\mathcal{T}$ on $\ell_0$ and $R_1\neq P$ of $\mathcal{T}$ on $\ell_1$ meets $\ell_2$ in a point $R_2$ of $\mathcal{T}$.

\begin{theorem} \label{clueq}
  All $\F_q$-linear $(h-1)$-clubs of rank $h$ in $\PG(1,q^h)$ are $\PGL$-equivalent to the set $(x,Tr(x))_{\F_{q^h}}$, $x\in \F_{q^h}$, where $Tr$ denotes the trace function from $\F_{q^h}$ to $\F_q$.
\end{theorem}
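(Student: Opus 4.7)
The approach is to use $\PGL(2, q^h)$ to bring the defining $\F_q$-subspace $U$ of an arbitrary $(h-1)$-club $\S = \B(U)$ of rank $h$ into a canonical form, and then to check that this form coincides, up to one further $\PGL$-transformation, with the graph of $\T$. First I move the head $P_0$ of the club to $(1:0)$ by acting with $\PGL$. The weight hypothesis on the head then becomes $\dim_{\F_q}(U \cap (\F_{q^h} \times \{0\})) = h-1$, so this intersection has the form $V \times \{0\}$ for some $(h-1)$-dimensional $\F_q$-subspace $V \subseteq \F_{q^h}$. Any vector $(a, b) \in U \setminus (V \times \{0\})$ automatically has $b \neq 0$, and $U = (V \times \{0\}) \oplus \F_q \cdot (a, b)$. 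The successive $\PGL$-transformations $(x, y) \mapsto (x, y/b)$ and $(x, y) \mapsto (x - a y, y)$ both preserve the $\F_{q^h}$-line $y = 0$, and applying them in turn lets me assume $U = (V \times \{0\}) \oplus \F_q \cdot (0, 1)$.

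Next I normalize $V$. Since $V$ is the kernel of some nonzero $\F_q$-linear functional $\F_{q^h} \to \F_q$, and since every such functional has the form $x \mapsto \T(\alpha x)$ for some $\alpha \in \F_{q^h}^*$ by non-degeneracy of the trace pairing, $V = \alpha^{-1} \cdot \ker \T$ for some $\alpha$. The diagonal transformation $(x, y) \mapsto (\alpha x, y)$ sends $V \times \{0\}$ onto $(\ker \T) \times \{0\}$ while fixing $(0, 1)$, so after this step
\[
U = \{(v, k) : v \in \ker \T,\ k \in \F_q\}.
\]

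Finally I exhibit a $\PGL$-transformation identifying this $U$ with $U_0 = \{(x, \T(x)) : x \in \F_{q^h}\}$. Fix $\alpha_0 \in \F_{q^h}$ with $\T(\alpha_0) = 1$ and apply $(x, y) \mapsto (x - \alpha_0 y, y)$ to $U_0$: a typical image vector is $(x - \alpha_0 \T(x), \T(x))$, and since $\T(x - \alpha_0 \T(x)) = \T(x) - \T(x)\T(\alpha_0) = 0$ the vector lies in the canonical $U$; a dimension count then yields equality. The steps are largely bookkeeping, and the only non-elementary input is the trace-duality description of $\F_q$-hyperplanes of $\F_{q^h}$, which is a standard fact; I do not expect any of this to present a serious obstacle.
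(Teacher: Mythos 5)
Your proof is correct and takes essentially the same route as the paper's: normalise the head to $(1,0)$ by a $\PGL$-element, invoke the trace description of $\F_q$-hyperplanes of $\F_{q^h}$, and finish with elementary diagonal and shear transformations. The only difference is presentational: you bring both the club and the graph of $\T$ to the canonical form $(\ker \T\times\{0\})\oplus\F_q\cdot(0,1)$, thereby making explicit the scaling/shear bookkeeping that the paper's shorter argument leaves implicit.
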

\begin{proof}
  A set of points skew from $(0,1)$, defined by $q^h-1$ vectors, can be written as $(x,f(x))$, $x\in \F_{q^h}^\ast$ since all these points have a different first coordinate. This implies that every $\F_q$-linear set of rank $h$ skew from $(0,1)$ can be written as $\{(x,f(x))\mid  x\in \F_{q^h}\}$ where $f$ is an $\F_q$-linear map. Clearly, every $(h-1)$-club is $\PGL$-equivalent to an $(h-1)$-club $\S'$ of rank $h$ that has head $(1,0)$ and is skew from $(0,1)$. The spread element corresponding to $(1,0)$ consists of all projective points corresponding to vectors of the form $(x, 0)$, $x\in \F_{q^h}$ and all hyperplanes of this $h$-dimensional subvector space are given by the points $(x,0)$ where $Tr(\alpha x)=0$ for some $\alpha\in \F_{q^h}^\ast$ (see \cite[2.24]{lidl}). The element of $\PGL$ induced by the matrix $\left[\begin{array}{cc}\alpha&0\\0&1\end{array}\right]$ maps the point set $(x,Tr(\alpha x))_{\F_{q^h}}$, $x\in \F_{q^h}^\ast$ onto $(x,Tr(x))_{\F_{q^h}}$, $x\in \F_{q^h}^\ast$ which proves the statement.
\end{proof}

\begin{lemma}\label{vers}
  If $\mu$ and $\mu'$ are subspaces of dimension $h-1$ in $\PG(2h-1,q)$ such that $\B(\mu)=\B(\mu')$ and such that $\mu \cap \mu'$ is a hyperplane of some spread element, then $\mu=\mu'$.
\end{lemma}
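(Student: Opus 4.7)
The plan is to translate the statement into vector-space language and use the Desarguesian structure together with trace-form duality. Let $V = V(2, q^h)$, viewed as an $\F_q$-vector space of dimension $2h$, so that the Desarguesian spread elements correspond to the $1$-dimensional $\F_{q^h}$-subspaces of $V$ (equivalently, the $h$-dimensional $\F_q$-subspaces closed under $\F_{q^h}$-multiplication). Let $U, U', R$ be the $\F_q$-subspaces of $V$ corresponding to $\mu$, $\mu'$, $\rho$; the hypothesis gives $\dim_{\F_q}(U \cap U') = h-1$ with $U \cap U' \subseteq R$. If $U = R$, then $\B(\mu) = \{\rho\}$, so $\B(\mu') = \{\rho\}$ forces $U' \subseteq R$, and by dimensions $U' = R = U$. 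Otherwise a dimension count shows $U \cap R = U \cap U' = U' \cap R$; call this common $(h-1)$-dimensional $\F_q$-subspace $P$, and pick $u \in U \setminus P$, $u' \in U' \setminus P$, so that $U = P \oplus \F_q u$ and $U' = P \oplus \F_q u'$.

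Next, for each $p \in P$ the vector $u + p$ lies outside $R$ and hence determines a spread element $R_{u+p} := \F_{q^h}(u+p) \neq R$. Since $R_{u+p} \in \B(U) = \B(U')$, it meets $U'$ non-trivially; expressing a nonzero element of $U' \cap R_{u+p}$ both as an $\F_{q^h}$-scalar multiple of $u+p$ and via the decomposition $U' = P \oplus \F_q u'$ (and noting that the $\F_q u'$-coefficient cannot vanish, since $R \cap R_{u+p} = 0$) yields a relation
\[ u' = \gamma_p(u + p) + q_p, \qquad \gamma_p \in \F_{q^h}^*,\ q_p \in P. \]
Comparing the case $p = 0$ with general $p$, the difference $(\gamma_0 - \gamma_p)u$ must lie in $R \cap \F_{q^h} u = 0$, forcing $\gamma_p = \gamma_0 =: \gamma$ for every $p$, and then $\gamma p = q_0 - q_p \in P$, i.e. $\gamma P \subseteq P$.

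To finish, I would identify $R$ with $\F_{q^h}$; then $P$ is the kernel of a nonzero $\F_q$-linear form on $\F_{q^h}$, which by trace duality has the shape $x \mapsto T(\beta x)$ for some $\beta \in \F_{q^h}^*$, with $T \colon \F_{q^h} \to \F_q$ the trace. The condition $\gamma P \subseteq P$ translates, after the substitution $y = \beta x$, into $T(\gamma y) = 0$ for every $y \in \ker T$. Since $y \mapsto T(\gamma y)$ and $y \mapsto T(y)$ are then nonzero $\F_q$-linear forms with the same kernel, one is an $\F_q^*$-multiple of the other, and non-degeneracy of the trace pairing forces $\gamma \in \F_q$. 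Consequently $u' = \gamma u + q_0 \in U$, so $U' \subseteq U$, and by dimensions $U = U'$, i.e., $\mu = \mu'$. The main obstacle I expect is the middle step: showing that $\gamma_p$ is independent of $p$ (which ultimately rests on the spread property $R \cap R_{u+p} = 0$) and translating the preservation of the $\F_q$-hyperplane $P$ under $\F_{q^h}$-multiplication into membership in $\F_q$ via trace-form duality.
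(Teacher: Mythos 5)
Your proof is correct, but it takes a genuinely different route from the paper's. You work in the vector-space model $V(2,q^h)$, use that the Desarguesian spread elements are exactly the $\F_{q^h}$-lines (so the special element $R$ is closed under $\F_{q^h}$-multiplication), pin down a single scalar $\gamma\in\F_{q^h}^{*}$ with $u'=\gamma u+q_0$, $q_0\in P$, and $\gamma P\subseteq P$, and then invoke trace duality to force $\gamma\in\F_q$, whence $U'\subseteq U$ and equality by dimensions. The paper instead argues projectively and purely by counting: assuming $\mu\neq\mu'$, it puts $\pi=\langle\mu,\mu'\rangle$, an $h$-space; the $q^{h-1}$ spread elements of $\B(\mu)$ other than the one containing $\mu\cap\mu'$ each meet $\pi$ in at least a line, and these disjoint lines already cover all $q^{h-1}(q+1)$ points of $\pi$ outside $\mu\cap\mu'$, whereas by the Grassmann identity each of the $q^{h}$ spread elements different from that special one must meet the $h$-space $\pi$ in at least a point lying outside $\mu\cap\mu'$ --- a contradiction. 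The paper's argument is shorter and uses only that $\D$ is a spread (a partition into $(h-1)$-spaces), so it would work verbatim for an arbitrary spread; yours relies essentially on the Desarguesian structure (both when concluding $\gamma_p p\in R$ and in the trace-duality step), but in exchange it is constructive, exhibiting the scalar relating $u'$ to $u$ and showing directly that it lies in $\F_q$. One small point you should spell out in your ``otherwise'' case: to get $U'\cap R=U\cap U'$ you also need $U'\neq R$, but this is immediate, since $U'=R$ would make $\B(\mu')=\B(\mu)$ consist of the single spread element $\rho$ and hence force $U=R$, which you have excluded.
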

\begin{proof}
  Suppose to the contrary that $\mu\neq \mu'$ and let $\pi$ be the $h$-space $\langle \mu,\mu'\rangle$. Since $\B(\mu)=\B(\mu')$, the $q^{h-1}$ spread elements of $\B(\mu)$, different from $\B(\mu\cap\mu')$ meet $\pi$ in a line. Moreover, by counting, we see that every point of $\pi$, not in $\mu\cap \mu'$ lies on such a spread element. But there are $q^{h}$ spread elements different from $\B(\mu\cap \mu')$, each meeting $\pi$ in a point or in a line, a contradiction.
\end{proof}

\begin{corollary}
  There are $q\frac{q^{h}-1}{q-1}$ different $\F_q$-linear $(h-1)$-clubs of rank $h$ with a fixed head in $\PG(1,q^h)$, $h>2$; there are $q(q^{2h}-1)/(q-1)$ different $\F_q$-linear $(h-1)$-clubs of rank $h$ in $\PG(1,q^h)$, $h>2$.
\end{corollary}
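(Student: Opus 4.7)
The plan is to fix a head $N$, the corresponding spread element $\rho$ of the Desarguesian spread, and an $(h-2)$-dimensional subspace $\sigma$ of $\rho$; count the $(h-1)$-subspaces $\mu$ of $\PG(2h-1,q)$ with $\mu\cap\rho = \sigma$; apply Lemma \ref{vers} for injectivity; and argue that every $(h-1)$-club with head $N$ arises this way. The first claim then follows, and the second by multiplying by the $q^h+1$ possible heads.

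For the count I would work in the quotient $\PG(2h-1,q)/\sigma \cong \PG(h,q)$: the $(h-1)$-subspaces $\mu$ containing $\sigma$ correspond bijectively to the points of this quotient, and the condition $\mu\cap\rho = \sigma$ (equivalently $\mu\neq\rho$) excludes only the point $\rho/\sigma$, leaving $\frac{q^{h+1}-1}{q-1}-1 = \frac{q(q^h-1)}{q-1}$ candidates. Each such $\mu$ yields an $(h-1)$-club with head $N$: for a spread element $S\neq\rho$, the subspaces $\sigma\subset\rho$ and $S$ are disjoint, so any line $\ell$ of $\mu\cap S$ would force $\sigma+\ell\subseteq\mu$, an $h$-dimensional subspace sitting inside an $(h-1)$-dimensional one. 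Hence $\mu\cap S$ is at most a point, $N$ has weight $h-1$, and every other point of $\B(\mu)$ has weight $1$. Lemma \ref{vers} then ensures that distinct $\mu,\mu'$ with $\mu\cap\rho = \mu'\cap\rho = \sigma$ give distinct clubs, since $\mu\cap\mu'\supseteq\sigma$ is a hyperplane of the spread element $\rho$.

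The hard part will be establishing surjectivity: that every $(h-1)$-club with head $N$ arises from our fixed $\sigma$, which is the step Lemma \ref{vers} does not cover. For this I would use Theorem \ref{clueq} to transport: every such club lies in the orbit of the standard club $\C_0 = \{(x,\Tr(x))\}$ under the stabilizer of $N$ in $\PGL(2,q^h)$. For $\C_0$, an explicit parametrization of a suitable $\mu$ of the form $\sigma + \F_q(v_1,v_2)$ -- with the constraints $v_2\in\gamma^{-1}\F_q^\ast$ and $\Tr(\gamma v_1)$ of a prescribed value, where $\sigma$ is the kernel of $x\mapsto\Tr(\gamma x)$ -- exhibits, for any hyperplane $\sigma$ of $\rho$, a subspace realizing $\B(\mu) = \C_0$ and $\mu\cap\rho = \sigma$; conjugating by the transporting element then handles every club with head $N$. (Equivalently, one can verify that the stabilizer in $\PGL(2,q^h)$ of $\C_0$, which necessarily fixes the unique head $N$, is exactly $\{[[a,b],[0,1]]:a=1-\Tr(b)\neq 0\}$, of order $(q-1)q^{h-1}$; then orbit-stabilizer applied to $\PGL(2,q^h)$ acting transitively on $(h-1)$-clubs gives directly $q(q^{2h}-1)/(q-1)$ clubs in total and $q(q^h-1)/(q-1)$ with any fixed head.) Either route delivers both counts stated in the corollary.
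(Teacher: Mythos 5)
Your proposal is correct, and its backbone is the same as the paper's: fix the head $N$ and a hyperplane $\sigma$ of the corresponding spread element $\rho$, note that every $(h-1)$-space $\mu\neq\rho$ through $\sigma$ meets each other spread element in at most a point and so yields an $(h-1)$-club with head $N$, get distinctness from Lemma \ref{vers}, count the $\frac{q^{h+1}-1}{q-1}-1$ such spaces, and multiply by the $q^h+1$ heads. Where you differ is the surjectivity step. The paper handles it in one line: the elementwise stabiliser of the Desarguesian spread (the maps induced by $\F_{q^h}^\ast$-scalar multiplication) induces the identity on $\PG(1,q^h)$ and acts transitively on the hyperplanes of $\rho$, so every club with head $N$ admits a representing subspace through the chosen $\sigma$. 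Your first route -- transport via Theorem \ref{clueq} together with an explicit representation of the standard club over each hyperplane of $\rho$ -- is equivalent in content: the subspaces you parametrise as $\sigma+\F_q(v_1,v_2)$ are precisely the scalar translates $\lambda\cdot\{(x,\mathrm{Tr}(x))\mid x\in\F_{q^h}\}$, $\lambda\in\F_{q^h}^\ast$, though the constraint should be stated as $\gamma v_2=\mathrm{Tr}(\gamma v_1)\neq 0$ rather than prescribing $v_2\in\gamma^{-1}\F_q^\ast$ and $\mathrm{Tr}(\gamma v_1)$ independently. Your parenthetical orbit--stabiliser route is genuinely different and self-contained: the $\PGL(2,q^h)$-stabiliser of the standard club is indeed $\left\{\left(\begin{smallmatrix}a&b\\0&1\end{smallmatrix}\right)\mid a=1-\mathrm{Tr}(b)\neq0\right\}$, of order $q^{h-1}(q-1)$, and with Theorem \ref{clueq} this gives both counts at once; as a bonus it essentially establishes the paper's subsequent corollary on the size of the $\PGammaL$-stabiliser. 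Finally, both your argument and the paper's tacitly use that the head of an $(h-1)$-club with $h>2$ is determined by the point set (you invoke it when saying the stabiliser ``necessarily fixes the unique head'', the paper when multiplying by the number of heads), so on that point you are on exactly the same footing as the authors.
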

\begin{proof}
  Note that the restriction $h>2$ is necessary, since for $h=2$ an $(h-1)$-clubs of rank $h$ is a scattered set and the head of a scattered set is not defined. 
  \par There are $(q^h+1)$ choices for the head of the $(h-1)$-club. For a fixed head, we may fix a hyperplane $\mu$ of the spread element $\rho$ corresponding to it since the elementwise stabiliser of the Desarguesian spread acts transitively on the hyperplanes of one spread element. By the previous lemma, we have that every $(h-1)$-space through $\mu$ different from $\rho$ gives rise to another $(h-1)$-club with head $\B(\rho)$. We know that there are $\frac{q^{h+1}-1}{q-1}-1=q\frac{q^{h}-1}{q-1}$ such $(h-1)$-spaces. This gives in total $(q^h+1)(\frac{q^{h+1}-1}{q-1}-1)=q\frac{q^{2h}-1}{q-1}$ different $\F_q$-linear $(h-1)$-clubs of rank $h$ in $\PG(1,q^h)$.
\end{proof}

\begin{corollary}
  The stabiliser of an $\F_q$-linear $(h-1)$-club in $\PG(1,q^h)$, $h>2$, in $\PGammaL(2,q^h)$, $q=p^t$, $p$ prime, has size $ht q^{h-1}(q-1)$.
\end{corollary}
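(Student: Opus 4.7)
The natural approach is a straightforward orbit–stabiliser count, using the previous corollary which tells us the total number of $\F_q$-linear $(h-1)$-clubs of rank $h$ in $\PG(1,q^h)$.

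First, I would verify that $\PGammaL(2,q^h)$ acts on the set $\mathfrak{C}$ of $\F_q$-linear $(h-1)$-clubs of rank $h$. The group $\PGammaL(2,q^h)$ embeds naturally into $\PGammaL(2h,q)$ as the subgroup preserving the Desarguesian spread $\D$ (an element acts on $V(2,q^h)$ as an $\F_{q^h}$-semilinear map with companion automorphism $\sigma\in\mathrm{Gal}(\F_{q^h}/\F_p)$; since $\sigma$ restricts to an automorphism of the subfield $\F_q$, the map is in particular $\F_q$-semilinear on $V(2h,q)$). Hence it sends $\F_q$-subspaces of $V(2h,q)$ to $\F_q$-subspaces of the same dimension while preserving $\D$, so it permutes $\F_q$-linear sets and preserves the weight of every point. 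In particular it permutes $(h-1)$-clubs.

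Next, by Theorem \ref{clueq}, the subgroup $\PGL(2,q^h)$ already acts transitively on $\mathfrak{C}$, so a fortiori $\PGammaL(2,q^h)$ does. Orbit–stabiliser then gives
\[
 |\mathrm{Stab}_{\PGammaL(2,q^h)}(\C)| \;=\; \frac{|\PGammaL(2,q^h)|}{|\mathfrak{C}|}.
\]
With $q^h=p^{ht}$, we have $|\PGammaL(2,q^h)|=ht\cdot q^h(q^{2h}-1)$, and the previous corollary gives $|\mathfrak{C}|=q(q^{2h}-1)/(q-1)$. Dividing,
\[
\frac{ht\cdot q^h(q^{2h}-1)}{q(q^{2h}-1)/(q-1)}\;=\;ht\cdot q^{h-1}(q-1),
\]
which is the claimed size.

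The only step that is not purely mechanical is the verification that $\PGammaL(2,q^h)$ really stabilises $\mathfrak{C}$ setwise (as opposed to just $\PGL$), but once one views the group as sitting inside $\PGammaL(2h,q)$ via field reduction and uses that the Galois group of $\F_{q^h}/\F_p$ stabilises the subfield $\F_q$, this is immediate. Everything else reduces to plugging in the order of $\PGammaL(2,q^h)$ and the count from the preceding corollary.
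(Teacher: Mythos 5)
Your argument is correct and is exactly the intended one: the paper states this corollary without proof, and the evident route is the orbit--stabiliser computation you give, using Theorem \ref{clueq} for transitivity of $\PGL(2,q^h)$ on $(h-1)$-clubs and the preceding corollary for the count $q(q^{2h}-1)/(q-1)$, together with $|\PGammaL(2,q^h)|=ht\,q^h(q^{2h}-1)$. Your extra check that $\PGammaL(2,q^h)$ indeed permutes the $(h-1)$-clubs (since every field automorphism of $\F_{q^h}$ stabilises the subfield $\F_q$, hence preserves $\F_q$-linearity, the Desarguesian spread and point weights) is a sensible point to make explicit and is argued correctly.
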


%DIT ZOU HET VOLGENDE LEMMA ONS TE VER LEIDEN DENK IK, AANGEZIEN WE DAN HET ANDERE LANGE BEWIJS NODIG HEBBEN MET DE PROJECTIE_EQUIVALENTIE. IK HEB BOVENDIEN SCHRIK DAT MICHEL DAT AL ERGENS GEDAAN HEEFT/AAN HET DOEN IS. DOE MIJ ERAAN DENKEN DAT IN IRSEE TE CHECKEN BIJ HEM.
%\begin{lemma} There exist non-equivalent clubs of rank $t$ in $\PG(1,p^h)$ whenever $t<h$.
%\end{lemma}

We will characterise KM-arcs of type $q/2$ by showing that all projective triads are $\F_2$-linear sets. For this, we need the following lemma by Vandendriessche.
\begin{lemma}[{\cite[Corollary 4.4]{vdd}}]
  On $3$ fixed concurrent lines in $\PG(2,q)$, $q$ even, there are $4q-4$ projective triads.
\end{lemma}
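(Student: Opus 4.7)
My plan is to convert the projective triad condition into an additive combinatorics problem on $(\F_q,+)$ and then invoke Kneser's theorem.

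First I would fix coordinates so that the three concurrent lines meet at $P=(0,0,1)$ and are $\ell_0:Y=0$, $\ell_1:X=0$, $\ell_2:X=Y$. Because $q$ is even, a short computation shows that after parametrizing each $\ell_i\setminus\{P\}$ by $\F_q$ via the reciprocal of the natural affine coordinate, three points with parameters $u\in\ell_0$, $v\in\ell_1$, $w\in\ell_2$ are collinear precisely when $u+v+w=0$. A projective triad of side $n$ on the three lines then corresponds to a triple of subsets $A,B,C\subseteq\F_q$ with $|A|=|B|=|C|=n-1$ and $A+B\subseteq C$; since $|A+B|\geq|A|=n-1=|C|$, the inclusion in fact forces $A+B=C$.

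The heart of the argument is structural. Let $H=\{h\in\F_q:C+h=C\}$ be the stabilizer of $C$; Kneser's theorem gives
\[
n-1=|A+B|\ \geq\ |A+H|+|B+H|-|H|.
\]
Both $|A+H|$ and $|B+H|$ are positive multiples of $|H|$ that are each at least $n-1$, while $|C|=n-1$ is itself a multiple of $|H|$; combining these observations forces $|H|=n-1$ and shows that each of $A$, $B$, $C$ is a single coset of $H$ in $(\F_q,+)$. In particular $n-1$ must be a power of $2$. I expect this Kneser step to be the main content of the proof; everything on either side of it is routine.

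For the count, the relevant side is $n=q/2+1$ (matching the size of a KM-arc of type $q/2$), so $H$ has index $2$ in $(\F_q,+)\cong\F_2^h$ and is one of the $q-1$ $\F_2$-hyperplanes. For each such $H$, the sets $A$ and $B$ can independently be either of the two cosets of $H$, giving $2\cdot 2=4$ possibilities, after which $C=A+B$ is determined. Multiplying yields $(q-1)\cdot 4=4q-4$ projective triads, as claimed.
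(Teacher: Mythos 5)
Your argument is correct, but it is necessarily a different route from the paper's, because the paper does not prove this lemma at all: it is quoted from Vandendriessche \cite[Corollary 4.4]{vdd}, whose treatment (like the paper's neighbouring count of $\F_2$-linear triads) is geometric/counting-based rather than additive-combinatorial. Your reduction is sound: with the three lines $Y=0$, $X=0$, $X=Y$ through $P=(0,0,1)$ and the parametrisations $(1,0,u)$, $(0,1,v)$, $(1,1,w)$, collinearity is exactly $u+v+w=0$ in characteristic $2$, so a triad corresponds to $A,B,C\subseteq\F_q$ of size $n-1$ with $A+B\subseteq C$, hence $A+B=C$; the Kneser step (which could even be replaced by the elementary observation that $|A+B|=|A|$ forces $B-b_0$ to lie in the stabiliser of $A$, whence $|A|=|B|=|H|$) correctly yields that $A,B,C$ are cosets of a common $\F_2$-subspace $H$ of order $n-1$, and the converse and injectivity of the correspondence $(H,A,B)\mapsto$ triad are indeed routine since $H=A-A$ is recovered from the triad. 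The count $ (q-1)\cdot 2\cdot 2=4q-4$ then follows. One interpretive point you handle implicitly but should make explicit: with this paper's definition the side $n$ is a free parameter, and your classification shows there are triads for every $n-1=2^k$, so the figure $4q-4$ only refers to triads of side $q/2+1$ (equivalently $q/2$ points besides the vertex on each line), which is the fixed side in Vandendriessche's definition and the only case used here; stating that restriction up front closes the gap between your count and the unqualified wording of the lemma. A pleasant by-product of your approach is that the coset structure exhibits every such triad as $\B(U)$ for the rank-$(h+1)$ subspace $U=\{(\lambda,\mu,\lambda a_0+\mu b_0+h)\mid \lambda,\mu\in\F_2,\ h\in H\}$, i.e.\ it proves directly that all projective triads are $\F_2$-linear, which the paper instead obtains by comparing this citation with a separate double count of $\F_2$-linear triads.
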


\begin{lemma}
  On $3$ fixed concurrent lines of $\PG(2,q)$, $q=2^h$, there are $4q-4$ $\F_2$-linear triads.
\end{lemma}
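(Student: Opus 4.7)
The plan is to construct $4q - 4$ pairwise distinct $\F_2$-linear triads on the three fixed concurrent lines, and then invoke the preceding lemma of Vandendriessche to conclude that no further $\F_2$-linear triads exist. After applying an element of $\PGL(3, q)$, I may assume that the three lines are $\ell_0 : Y = 0$, $\ell_1 : X = 0$ and $\ell_2 : X = Y$, meeting in $P = (0, 0, 1)$; the remaining points of $\ell_0, \ell_1, \ell_2$ are then $(1, 0, a)$, $(0, 1, b)$, $(1, 1, c)$ for $a, b, c \in \F_q$.

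For each $(\alpha, \epsilon_0, \epsilon_1) \in \F_q^* \times \F_2 \times \F_2$, I would consider the $\F_2$-subspace
\[
\pi(\alpha, \epsilon_0, \epsilon_1) = \bigl\{(x, y, z) \in V(3, q) : x, y \in \F_2,\ \mathrm{Tr}(\alpha z) = \epsilon_0 x + \epsilon_1 y\bigr\}
\]
of $V(3, q)$, which has $\F_2$-dimension $h + 1$. The corresponding $\F_2$-linear set $\B(\pi)$ is contained in $\ell_0 \cup \ell_1 \cup \ell_2$: since rescaling a vector of $\pi$ by any $\lambda \in \F_q \setminus \F_2$ destroys the condition that its first two coordinates lie in $\F_2$, the projective class of each non-zero vector of $\pi$ is determined by one of the four first-two-coordinate patterns in $\F_2^2$, namely $(0,0), (1,0), (0,1), (1,1)$, which correspond to $P$ and to points of $\ell_0, \ell_1, \ell_2$ respectively. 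Explicit counts then show that $P$ has weight $h - 1$ (from the vectors $(0,0,z)$ with $\mathrm{Tr}(\alpha z) = 0$), while each of $\ell_0, \ell_1, \ell_2$ contains $q/2$ further points of $\B(\pi)$ whose third coordinates form a coset of $\ker(\mathrm{Tr}(\alpha \cdot))$, for a total of $3q/2 + 1$ points. The triad condition is immediate from the $\F_2$-linearity of the trace: if $(1, 0, a)$ and $(0, 1, b)$ both lie in $\B(\pi)$, the line joining them meets $\ell_2$ in $(1, 1, a + b)$, and $\mathrm{Tr}(\alpha (a + b)) = \epsilon_0 + \epsilon_1$, so this intersection also lies in $\B(\pi)$.

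Finally, the set of third coordinates occurring in $\B(\pi) \cap (\ell_0 \setminus \{P\})$ is the coset $\{a \in \F_q : \mathrm{Tr}(\alpha a) = \epsilon_0\}$, which determines both $\epsilon_0$ and the $\F_2$-hyperplane $\ker(\mathrm{Tr}(\alpha \cdot))$ of $\F_q$; since $\alpha \mapsto \ker(\mathrm{Tr}(\alpha \cdot))$ is a bijection from $\F_q^*$ onto the set of $\F_2$-hyperplanes of $\F_q$, the triple $(\alpha, \epsilon_0, \epsilon_1)$ can be recovered from the triad. Hence the construction yields $(q - 1) \cdot 4 = 4q - 4$ pairwise distinct $\F_2$-linear triads on $\ell_0, \ell_1, \ell_2$. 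Combined with Vandendriessche's upper bound of $4q - 4$ projective triads on three concurrent lines, and the fact that every $\F_2$-linear triad is in particular a projective triad, the count is sharp. The main point requiring care is the check that no scalar multiple of a vector of $\pi$ by $\lambda \in \F_q \setminus \F_2$ returns to $\pi$, which is what confines $\B(\pi)$ to the three chosen lines and rules out any stray incidences.
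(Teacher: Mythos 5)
Your proof is correct, but it takes a genuinely different route from the paper. The paper counts the $\F_2$-linear triads directly in the field-reduction model $\PG(3h-1,2)$: for a fixed pair $R_1\in\ell_1\setminus\{N\}$, $R_2\in\ell_2\setminus\{N\}$ it shows that every $\F_2$-linear triad through $R_1,R_2$ is $\B(\langle \ell, \nu\rangle)$ with $\ell$ the transversal line through a fixed point $Q$ of $S_1$ to the regulus on $S_1,S_2,S_3$ and $\nu$ one of the $2^h-1$ hyperplanes of $\rho$ (distinct $\nu$ giving distinct triads by Lemma \ref{vers}), and then double counts triples $(R_1,R_2,\text{triad})$ via $2^h\cdot 2^h\cdot(2^h-1)=X\cdot 2^{h-1}\cdot 2^{h-1}$ to get $X=4q-4$; this is an exact count that does not use Vandendriessche's lemma, which the paper only invokes afterwards to deduce the corollary. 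You instead exhibit the $4(q-1)$ triads explicitly as the linear sets $\B(\pi(\alpha,\epsilon_0,\epsilon_1))$ cut out by trace conditions, verify they are pairwise distinct projective triads on the three lines, and obtain exactness by combining this lower bound with Vandendriessche's upper bound of $4q-4$ projective triads together with the trivial inclusion of $\F_2$-linear triads among projective triads. This is logically sound and causes no circularity (Vandendriessche's count is an external result), and it even makes the subsequent corollary that all projective triads are $\F_2$-linear immediate, while also giving an explicit trace description of every such triad in the spirit of Theorem \ref{clueq}; what it does not provide is an intrinsic exact count of the linear triads, which is precisely what the paper's double-counting argument supplies. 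One cosmetic slip: rescaling by $\lambda\in\F_q\setminus\F_2$ does \emph{not} destroy the condition for the vectors $(0,0,z)$, but since you compute the weight of $P$ from those vectors separately and correctly, this does not affect the argument.
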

\begin{proof}
  Fix $3$ concurrent lines, $\ell_1,\ell_2,\ell_3$, with $\ell_1\cap \ell_2=N$ and consider two points $R_1\neq N\in \ell_1$ and $R_2\neq N\in \ell_2$. The point $\left\langle R_1,R_2\right\rangle\cap \ell_3$ is denoted by  $R_3$. Let $S_i$ be the spread element in $\PG(3h-1,2)$ corresponding to $R_i$, $i=1,2,3$, and let $\rho$ be the spread element corresponding to $N$. If $\mu$ is an $h$-space determining (via the construction given in Theorem \ref{translation}) an $\F_2$-linear triad with nucleus $N$ and containing $R_1$ and $R_2$, then clearly, $\mu$ meets $\rho$ in an $(h-2)$-space and contains a transversal line to the regulus through $S_1,S_2,S_3$. If $\B(\mu)$ is a triad through $R_1$, we may always choose $\mu$ in such a way that it contains a fixed point $Q$ of $S_1$. If $\B(\mu)$ contains $R_2$ as well, we see that if $\mu$ contains $Q$, it contains the unique transversal line $\ell$ through $Q$ to the regulus through $S_1,S_2,S_3$. Now there are $2^h-1$ hyperplanes of $\rho$, and each of these hyperplanes defines together with $\ell$ a different $h$-space. It is clear from Lemma \ref{vers} that the defined triads are also different. If we now count triples $(R_1\neq N \text{ on } \ell_1 ,R_2\neq N \text{ on } \ell_2\text{, triad through } R_1 \text{ and }R_2)$, we find that $2^h.2^h.(2^h-1)=X.2^{h-1}.2^{h-1}$, where $X$ is the number of $\F_2$-linear triads. 
\end{proof}

From the previous $2$ lemmas, we immediately get:

\begin{corollary}
  All projective triads are $\F_2$-linear sets.
\end{corollary}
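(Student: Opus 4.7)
The plan is a straightforward counting argument combining the two preceding lemmas. Fix any three concurrent lines $\ell_1,\ell_2,\ell_3$ in $\PG(2,q)$ with $q=2^h$. By the lemma of Vandendriessche cited above, there are exactly $4q-4$ projective triads supported on these three lines, and by the lemma proved immediately before the corollary, there are exactly $4q-4$ $\F_2$-linear triads supported on them. Since each $\F_2$-linear triad arises from the construction of Theorem~\ref{translation} with $i=h-1$ and is therefore in particular a projective triad on $\ell_1,\ell_2,\ell_3$, the family of $\F_2$-linear triads on these three lines is contained in the family of projective triads on them. Equality of finite cardinalities then forces this inclusion to be an equality.

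Consequently, every projective triad on $\ell_1,\ell_2,\ell_3$ is an $\F_2$-linear set. Because every projective triad lies by definition on some triple of three concurrent lines, applying the above argument with $\ell_1,\ell_2,\ell_3$ taken to be that triple yields the statement for all projective triads.

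The only point that requires checking beyond the two cited counts is the inclusion ``every $\F_2$-linear triad is a projective triad'', which is already implicit in the terminology of the preceding lemma and follows from the structure of the construction in Theorem~\ref{translation}: with $i=h-1$, the configuration sits on three concurrent lines through the head $N$ and satisfies the combinatorial pattern on pairs of points demanded in the definition of a projective triad. Once this inclusion is noted, the corollary is immediate by pigeonhole and no further calculation is needed, the substantive work having been carried out already in the two preceding lemmas.
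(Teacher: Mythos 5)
Your proposal is correct and is essentially the paper's own argument: the paper derives the corollary "immediately" from the two preceding lemmas, i.e.\ by exactly the counting/pigeonhole comparison of the $4q-4$ projective triads with the $4q-4$ $\F_2$-linear triads on three fixed concurrent lines, using that every $\F_2$-linear triad is in particular a projective triad. Your extra remark verifying that inclusion is a sensible precaution but does not change the route.
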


\begin{theorem}
  All projective triads of side $q/2$ in, $q$, $q$ even, are $\PGL$-equivalent.
\end{theorem}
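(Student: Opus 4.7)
The plan is to combine the preceding corollary (projective triads are $\F_2$-linear sets) with the $\PGL$-equivalence of $(h-1)$-clubs (Theorem~\ref{clueq}). Given two projective triads $T, T'$ of side $q/2$, I would first use the transitivity of $\PGL(3, 2^h)$ on unordered triples of concurrent lines in $\PG(2, 2^h)$ (which follows from the triple-transitivity of $\PGL(2, 2^h)$ on the points of a line) to align the three distinguished lines and common center $N$ of $T$ and $T'$; label one of the aligned lines $\ell_\infty$.

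Next, on three fixed concurrent lines through $N$, the matching counts $4q-4$ in the two preceding lemmas together with the corollary force every projective triad of side $q/2$ to be one of the $\F_2$-linear triads coming from the construction of Theorem~\ref{translation} at $i = h-1$. Hence both $T$ and $T'$ are determined by the data $(\C, \pi)$, respectively $(\C', \pi')$, where $\C$ is an $(h-1)$-club of rank $h$ on $\ell_\infty$ with head $N$ (defined by some $(h-1)$-space $\mu \subset H$) and $\pi$ is an $h$-space of $\PG(3h-1, 2)$ meeting $H$ exactly in $\mu$. By Theorem~\ref{clueq}, an element of $\PGL(2, 2^h)$ fixing $N$ sends $\C$ to $\C'$; lifting it to $\PGL(3, 2^h)$ fixing $\ell_\infty$ setwise, we reduce to the case $\mu = \mu'$. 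The remaining freedom in the choice of $\pi$ with $\pi \cap H = \mu$ is absorbed by the stabiliser of $\mu$ in $\PGL(3,2^h)$, which combines the elation group with axis $\ell_\infty$ (fixing $\mu$ pointwise, exactly as in the translation argument at the end of the proof of Theorem~\ref{translation}) with the stabiliser of $\C$ supplied by the corollary to Theorem~\ref{clueq}; an orbit count shows the combined action is transitive on the set of $h$-spaces through $\mu$ transverse to $H$. Composing the three collineations produces the desired element of $\PGL(3, 2^h)$ sending $T$ to $T'$.

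The main obstacle is the identification of every projective triad of side $q/2$ with the linear-set output of Theorem~\ref{translation} at the level of subspaces $\mu$: while this is forced by the matching $4q-4$ counts combined with the corollary, making the correspondence explicit (so that Theorem~\ref{clueq} can be applied to the $(h-1)$-club $\C$ on $\ell_\infty$) requires a careful bijective argument. A secondary and more routine point is the transitivity of the stabiliser of $\mu$ on the $h$-spaces $\pi$ through $\mu$ transverse to $H$, which reduces to an orbit-size calculation matching the $4q-4$ count on three fixed concurrent lines.
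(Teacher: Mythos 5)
Your route departs from the paper's after the line-alignment step: the paper never returns to the $\PG(3h-1,2)$ model in this proof. It applies Theorem \ref{clueq} twice inside the plane -- first to match the triad sections on one line by a collineation preserving the configuration, then again on a second line -- and finishes with the elementary observation that a projective triad is uniquely determined by the third line together with its sections on the other two. That observation replaces, and avoids, your entire final step; no statement about transitivity on $h$-spaces $\pi$ through $\mu$ is needed there. Your plan, by contrast, classifies the data $(\mu,\pi)$ up to a group action in the field-reduction model, and the two points you flag as obstacles are indeed genuine gaps rather than routine verifications.

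Concretely: (1) the transitivity assertion is not proved, and the ingredients you name do not obviously yield it. The elations with axis $\ell_\infty$ fix $H$ pointwise, and on the $q(q-1)$ $h$-spaces $\pi$ with $\pi\cap H=\mu$ their orbits have size exactly $q$, leaving $q-1$ orbits; the stabiliser of $\C$ from the corollary to Theorem \ref{clueq} has order $h2^{h-1}$, is computed in $\PGammaL(2,2^h)$ (so partly outside $\PGL$), and acts on the line -- how its planar lifts fuse those $q-1$ orbits is precisely what is missing. The step becomes a one-line computation only if you use a group you do not mention, namely all perspectivities (elations \emph{and} homologies) with axis $\ell_\infty$: writing $\pi=\langle\mu,p\rangle$ with $p=(a_0,b_0,c_0)_{\F_2}$, $c_0\neq0$, the map $(x,y,z)\mapsto(x+\alpha z,y+\beta z,\lambda z)$ fixes $H$ pointwise (hence $\mu$ and $\C$) and sends $p$ to $(a_0+\alpha c_0,b_0+\beta c_0,\lambda c_0)_{\F_2}$, so solving for $(\alpha,\beta,\lambda)$ gives transitivity. (2) The reduction ``to the case $\mu=\mu'$'' is not automatic: a club has $q-1$ distinct representing $(h-1)$-spaces $\zeta\mu$, $\zeta\in\F_q^\ast$ (this is exactly the paper's remark before Theorem \ref{case1}), a planar lift of the club equivalence is only determined up to perspectivities and the $\F_q^\ast$-scalar ambiguity of the $\F_2$-lift, and it need not fix the two affine secants; together with the passage from ``the triad is an $\F_2$-linear set'' to ``the triad is $\B(\pi)$ with $\pi\cap H=\mu$ a representative of $\C$'', this is the bookkeeping you defer as a ``careful bijective argument''. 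With those repairs your argument can be completed, but it remains substantially heavier than the paper's two-step alignment in the plane.
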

\begin{proof} 
  %We have seen in the previous corollary that all projective triads are $\F_2$-linear sets. Moreover, the points of a projective triad on each of the $q/2$-secants, together with the intersection point of the $3$ concurrent lines, form an $\F_2$-linear set which is an $(h-1)$-club of rank $h$ in $\PG(1,q)$, $q=2^h$. By Lemma \ref{clueq}, we know that all $(h-1)$-clubs are projectively equivalent.
  %%%%
  Let $(X,Y,Z)$ be coordinates for $\PG(2,q)$, let $\mathcal{T}_1$ be a projective triad on the concurrent lines $\ell_1,\ell_2,\ell_3$, where $\ell_1:X=0$, $\ell_2:Y=0$, and $\ell_3:X+Y=0$ and let $\mathcal{T}_2$ be a projective triad on the concurrent lines $m_1,m_2,m_3$. It is clear that there is a collineation $\varphi$ mapping the lines $m_1,m_2,m_3$ onto $\ell_1,\ell_2,\ell_3$ respectively. Let $\mathcal{T}_2'=\varphi(\mathcal{T}_2)$. From Lemma \ref{clueq}, we get that there is an element $\psi\in \PGL(2,q)$ mapping $L_1\cap \mathcal{T}_2'$ onto $L_1\cap \mathcal{T}_1$, let $A=\left(\begin{smallmatrix}a&0\\c&d\end{smallmatrix}\right)$ be a $2\times 2$-matrix corresponding to $\psi$. The $3\times 3$-matrix $\left(\begin{smallmatrix}a&0&0\\0&a&0\\c&c&d\end{smallmatrix}\right)$ defines an element $\bar{\psi}$ of $\PGL(3,q)$ that fixes the points on $\ell_2$ and $\ell_3$, let $\mathcal{T}_2''=\bar{\psi}(\mathcal{T}_2')$. Similarly, we can define an element $\xi$ mapping $\mathcal{T}_2''\cap \ell_2$ onto $\mathcal{T}_1\cap \ell_2$ and fixing the points of $\ell_1$ and $\ell_3$. Since a projective triad is uniquely defined by $\ell_3$ and two sets of $\frac{q}{2}$ points on $\ell_1,\ell_2$ respectively, this implies that $\xi\psi\varphi(\mathcal{T}_2)=\mathcal{T}_1$, and hence $\mathcal{T}_2$ and $\mathcal{T}_1$ are $\PGL$-equivalent.
\end{proof}

\begin{corollary}
  All KM-arcs of type $q/2$ in $\PG(2,q)$, $q$ even, are $\PGL$-equivalent to the example with affine points $\{(x,Tr(x),1)\mid x\in \F_{q}\}$, where $Tr$ denotes the absolute trace function $\F_{q}\to \F_2$.
\end{corollary}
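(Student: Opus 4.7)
The plan is to exploit a natural correspondence between KM-arcs of type $q/2$ and projective triads on the three $(q/2)$-secants, then invoke the preceding $\PGL$-equivalence of projective triads.

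Let $\A$ be a KM-arc of type $q/2$ in $\PG(2,q)$, $q$ even, with $(q/2)$-secants $\ell_0, \ell_1, \ell_2$ meeting at the nucleus $N$. The first task is to show $N \notin \A$ when $q > 4$: if $N \in \A$, then every line through $N$ is a $2$- or $(q/2)$-secant, so $\sum_{\ell \ni N}|\A \cap \ell| = 3(q/2) + 2(q-2) = 7q/2 - 4$, but counting incidences also gives $q + |\A| = 5q/2$; equating forces $q = 4$. So for $q > 4$, $N \notin \A$ and $\A$ consists of exactly $q/2$ points on each $\ell_i \setminus \{N\}$, lying entirely in $\ell_0 \cup \ell_1 \cup \ell_2$. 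The case $q = 4$ is a hyperoval of $\PG(2,4)$, handled by the classical uniqueness of such hyperovals.

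Next, I would verify that the complement $\mathcal{T} := (\ell_0 \cup \ell_1 \cup \ell_2) \setminus \A$, which has $q/2 + 1$ points on each $\ell_i$ and contains $N$, is a projective triad. For $R_0 \in \mathcal{T} \cap \ell_0 \setminus \{N\}$ and $R_1 \in \mathcal{T} \cap \ell_1 \setminus \{N\}$, the line $R_0 R_1$ differs from $\ell_0, \ell_1, \ell_2$, so it is a $0$- or $2$-secant of $\A$. Because $R_0, R_1 \notin \A$ and $\A \subseteq \ell_0 \cup \ell_1 \cup \ell_2$, any point of $\A$ on $R_0R_1$ would be the unique intersection of $R_0R_1$ with $\ell_2$, giving at most one such point; so $R_0R_1$ cannot be a $2$-secant, hence is a $0$-secant, and $R_0R_1 \cap \ell_2 \in \mathcal{T}$, as required.

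The example $\A_0$ with affine part $\{(x, Tr(x), 1) : x \in \F_q\}$ is itself a KM-arc of type $q/2$, obtained from Theorem \ref{translation} applied to the trace $(h-1)$-club $\{(x, Tr(x))_{\F_{2^h}} : x \in \F_{2^h}\}$; its three $(q/2)$-secants are $Y = 0$, $Y = Z$, $Z = 0$, meeting at $(1, 0, 0)$, with associated projective triad $\mathcal{T}_0$. By the preceding theorem, some $\phi \in \PGL(3, q)$ satisfies $\phi(\mathcal{T}) = \mathcal{T}_0$; and for $q > 4$ the three $(q/2)$-secants are intrinsic to $\mathcal{T}$ (being the only lines meeting it in more than $3$ points, since any other line meets $\ell_0 \cup \ell_1 \cup \ell_2$ in at most $3$ points), so $\phi$ sends $\{\ell_0, \ell_1, \ell_2\}$ to the secants of $\A_0$, and hence $\phi(\A) = \phi((\ell_0\cup\ell_1\cup\ell_2)\setminus\mathcal{T}) = \A_0$. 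The main obstacle is the careful bookkeeping of the triad axiom and the intrinsic recovery of the three secants from $\mathcal{T}$, both of which reduce to straightforward line-counting in $\PG(2, q)$.
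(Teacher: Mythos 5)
Your proof is correct and follows essentially the same route the paper intends: the corollary is stated as an immediate consequence of the preceding theorem that all projective triads on three concurrent lines are $\PGL$-equivalent, via the complement correspondence between KM-arcs of type $q/2$ and projective triads. Your write-up simply makes explicit the details the paper leaves implicit (the nucleus is not a point of the arc, the complement satisfies the triad axiom, the three $q/2$-secants are intrinsically recoverable from the triad, and the degenerate case $q=4$).
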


\subsection{Translation hyperovals: \texorpdfstring{$i=1$}{i=1}}
The construction of Theorem \ref{translation} also works for $i=1$. In this case, we  start with a $1$-club in $\PG(1,2^h)$, i.e. a {\em scattered linear set}. The obtained $KM$-arc is an arc of type $2$, which means that it is simply a {\em hyperoval}, and since it is a translation set, we obtain a {\em translation hyperoval}. The correspondence between translation hyperovals and scattered linear sets was already established in \cite[Theorem 2]{glynn}.

%i=0 iets zeggen?

\subsection{A family of translation KM-arcs of type \texorpdfstring{$q/4$}{q/4} in \texorpdfstring{$\PG(2,q)$}{PG(2,q)}: \texorpdfstring{$i=h-2$}{i=h-2}}\label{newfamily}

We construct an $\F_q$-linear $(h-2)$-club of rank $h$ in $\PG(1,q^{h})$, and use Theorem \ref{translation} and this $(h-2)$-club to construct a family of KM-arcs of type $q/4$ in $\PG(2,q)$, $q$ even. Throughout this section we assume $h\geq3$.

\begin{lemma}\label{lshmin3ruimte}
%The set $(t_1\lambda+t_2\lambda^2+\ldots+t_{h-1}\lambda^{h-1}, t_{h-1}+t_{h+1}\lambda)_{\F_{p^h}}$, where $t_i$ are elements in $\F_p$, not all zero and $\lambda$ is a primitive element of $\F_{p^h}$, is an $\F_p$-linear set $\B$ of rank $h$ of size $p^{h-1}+p^{h-2}+1$.
  %INTRODUCTIE LAMBDA
  The set $\C=$
  \[
    \{(t_1\lambda+t_2\lambda^2+\dots+t_{h-1}\lambda^{h-1}, t_{h-1}+t_{h}\lambda)\mid t_{i}\in\F_{q},(t_{1},\dots,t_{h})\neq(0,\dots,0)\}\subseteq\PG(1,q^{h})
  \]
  is an $\F_q$-linear $(h-2)$-club of rank $h$ with head $(1,0)$. Consequently, $|\C|=q^{h-1}+q^{h-2}+1$.
\end{lemma}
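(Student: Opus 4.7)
The plan is to verify three things in turn: that the given vectors span an $h$-dimensional $\F_q$-subspace $U$ of $\F_{q^h}^2$ (so that $\C=\B(U)$ is an $\F_q$-linear set of rank $h$), that the point $(1,0)$ has weight $h-2$ in $\C$, and that every other point of $\C$ has weight exactly $1$. The size formula then follows from the general formula for an $i$-club recalled in the introduction. Throughout I will assume (as is implicit and, for $h\geq 3$, necessary for the statement) that $\lambda$ generates $\F_{q^h}$ as a field extension of $\F_q$; then $\{1,\lambda,\ldots,\lambda^{h-1}\}$ is an $\F_q$-basis of $\F_{q^h}$, and multiplying by the nonzero element $\lambda$ shows that $\{\lambda,\lambda^2,\ldots,\lambda^h\}$ is another.

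The first two steps are handled by the $\F_q$-linear map
\[
\phi\colon\F_q^h\to\F_{q^h}^2,\qquad \phi(t_1,\ldots,t_h)=\Bigl(\sum_{i=1}^{h-1}t_i\lambda^i,\ t_{h-1}+t_h\lambda\Bigr),
\]
whose image is $U$. If $\phi(t)=0$, the second coordinate together with $\lambda\notin\F_q$ forces $t_{h-1}=t_h=0$, after which the first coordinate reduces to $t_1\lambda+\cdots+t_{h-2}\lambda^{h-2}=0$; $\F_q$-linear independence of $\lambda,\ldots,\lambda^{h-2}$ then gives $t_1=\cdots=t_{h-2}=0$, so $\phi$ is injective and $\dim_{\F_q}U=h$. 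The weight of $(1,0)$ equals $\dim_{\F_q}(U\cap(\F_{q^h}\times\{0\}))$, and the condition $t_{h-1}+t_h\lambda=0$ leaves only $(t_1,\ldots,t_{h-2})\in\F_q^{h-2}$ free, so the weight is $h-2$.

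For the third step I take $\phi(t),\phi(s)\in U$ with nonzero second coordinates representing the same projective point. Expanding the cross-product identity as a polynomial in $\lambda$ of degree between $1$ and $h$ and reading off its coefficients in the basis $\{\lambda,\lambda^2,\ldots,\lambda^h\}$ produces
\[
s_{h-1}t_1-s_1t_{h-1}=0,\qquad s_ht_{h-1}-s_{h-1}t_h=0,
\]
together with $(s_{h-1}t_i-s_it_{h-1})+(s_ht_{i-1}-s_{i-1}t_h)=0$ for $2\leq i\leq h-1$. If $s_{h-1}\neq 0$, set $\kappa:=t_{h-1}/s_{h-1}\in\F_q$: the second equation forces $t_h=\kappa s_h$, the first forces $t_1=\kappa s_1$, and writing $u_i:=t_i-\kappa s_i$ reduces the middle equations to the recursion $u_is_{h-1}+u_{i-1}s_h=0$ starting from $u_1=0$, propagating $u_i=0$ for all $i$; hence $t=\kappa s$. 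The complementary case $s_{h-1}=0$, $s_h\neq 0$ is analogous with $\kappa:=t_h/s_h$. So two non-head vectors of $U$ project to the same point only when they are $\F_q$-proportional, showing that every non-head point of $\C$ has weight exactly $1$.

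The main technical obstacle is the third step; what makes it tractable is that the cross-product identity lives only in degrees $1$ through $h$ in $\lambda$, so its coefficients can be read off directly against the basis $\{\lambda,\ldots,\lambda^h\}$ of $\F_{q^h}$ without ever having to reduce modulo the minimal polynomial of $\lambda$.
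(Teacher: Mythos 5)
Your proof is correct, and it follows the same overall skeleton as the paper's (rank $h$ via injectivity of the parametrisation, head weight $h-2$, then the weights of the remaining points), but the crucial third step is handled by a genuinely different computation. The paper first asserts that every non-head point is represented by $q-1$ or $q^{2}-1$ vectors, i.e.\ has weight $1$ or $2$ --- an unstated consequence of the fact that the second coordinates of vectors of $U$ only span the $2$-dimensional space $\langle 1,\lambda\rangle_{\F_q}$ --- and then excludes weight $2$ by a normalisation: it picks representatives with second coordinates exactly $1$ and $\lambda$, deduces that the proportionality factor must be $1/\lambda$, and reaches a contradiction by comparing the $\lambda^{h-1}$-coefficients of two expansions of degree at most $h-1$. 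You instead prove directly that any two vectors of $U$ with nonzero second coordinate representing the same point are $\F_q$-proportional, by expanding the vanishing cross product against the basis $\{\lambda,\dots,\lambda^{h}\}$ and solving the resulting coefficient recursion (your case split $s_{h-1}\neq 0$ versus $s_{h-1}=0,\ s_h\neq 0$ is exhaustive and the recursion does propagate $u_i=0$ as claimed). Your route is computationally a little heavier but more self-contained: it needs no a priori bound on non-head weights, whereas the paper's argument is shorter once that bound is granted. One half-sentence worth adding to yours: for a point of $\C$ other than $(1,0)$ every representing vector automatically has nonzero second coordinate (a vector with second coordinate $0$ lies over the head), which is what turns your proportionality statement into the conclusion that every non-head point has weight exactly $1$.
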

\begin{proof}
  The point set $\C$ is determined by the $\F_{q}$-vector set
  \[
    W=\{(0,t_1,t_2,\dots,t_{h-1};t_{h-1},t_{h},0,\dots,0)\mid t_{i}\in\F_{q},(t_{1},\dots,t_{h})\neq(0,\dots,0)\}\subseteq\F^{2h}_{q}
  \]
  It is immediate that $\C$ is an $\F_q$-linear set of rank $h$, as $W\cup\{(0,\dots,0)\}$ is an $h$-dimensional subspace of $\F^{2h}_{q}$.% the $t_i$ are in $\F_p$. By chosing $t_{j}=1$ and $t_{i}=0$ for $i\neq j$, with $j=1,\dots,h-1,h+1$, we find $h$ linearly independent $\F_{p}$-vectors spanning this subspace, so $\B$ has rank $h$.
  \par The projective point $(1,0)$ in $\C$ arises from every vector in $W$ with $t_{h-1}=t_{h}=0$. Together with the zero vector, these form an $(h-2)$-dimensional subspace in $\F^{2h}_{q}$. All other projective points in $\C$ are represented by $q-1$ or by $q^{2}-1$ vectors in $W$. Assume that a point is represented $q^{2}-1$ times, then we can find $t_{1},\dots,t_{h-2},t'_{1},\dots,t'_{h-2}\in\F_{q}$ such that
  \[
    (t_1\lambda+t_2\lambda^2+\ldots+t_{h-2}\lambda^{h-2}+\lambda^{h-1},1)=k(t'_1\lambda+t'_2\lambda^2+\ldots+t'_{h-2}\lambda^{h-2},\lambda)\;,
  \]
  with $k\in\F_{q^{h}}$. Looking at the second coordinate we can see that $k=1/\lambda$. However, $t_1\lambda+t_2\lambda^2+\ldots+t_{h-2}\lambda^{h-2}+\lambda^{h-1}$ and $t'_1+t'_2\lambda+\ldots+t'_{h-2}\lambda^{h-2}$ cannot be the same element of $\F_{q^{h}}$. Hence, all points in $\C$ but $(1,0)$ are represented by precisely $q-1$ vectors in $W$, hence by precisely $1$ point in the projective space $\PG(2h-1,q)$ arising from $\F^{2h}_{q}$. It follows that $\C$ is an $(h-2)$-club.% So, there are $\frac{p^{h}-p^{h-2}}{p-1}=p^{h-1}+p^{h-2}$ points in the linear set $\B$ with $(t_{h-1},t_{h+1})\neq(0,0)$. The lemma follows.
\end{proof}

%DE VOLGENDE REMARK MAG VAN MIJ HELEMAAL WEG (?)
%\begin{remark}   %By applying field reduction to the projective line $\ell=\PG(1,q)$, $q=2^{h}$ and $h\geq3$, we find an $(h-1)$-spread in $\PG(2h-1,2)$. The $(h-2)$-club $\C\subset\ell$ described in Lemma \ref{lshmin3ruimte}, arises from a projective $(h-1)$-space $\mu$, given by the vector set in the statement of the lemma.
%   Using the coordinate functions $X_{0},\dots,X_{2h-1}$ in $\PG(2h-1,p)$, the $h$-space $\mu$ in $\PG(2h-1,p)$ determining the $(h-2)$-club $\C=\B(\mu)$ of the previous lemma has the following equations,: $X_{0}=X_{h-1}+X_{h}=X_{h+2}=X_{h+3}=\dots=X_{2h-1}=0$. %This subspace meets the spread element $\rho$ in an $(h-3)$-space, with $\rho$ the spread element corresponding to the point $N(1,0)$, and meets $2^{h-1}+2^{h-2}$ other spread elements in a point.
% %We can embed the line $\ell$ in the projective plane $\PG(2,q)$ and accordingly the $(h-1)$-spread of $H=\PG(2h-1,2)$ in an $(h-1)$-spread of $\PG(3h-1,2)$, which arises by applying field reduction to $\PG(2,q)$. We choose an $h$-space $\pi$ containing $\mu$, not contained in $H$.
%  %The remaining $2^{h-2}$ spread elements are met by the $(h-2)$-space $\mu'$ that gives rise to $\C'$ and that meets $\rho$ in an $(h-3)$-space. The subspace $\mu'$ is described by $X_{0}+X_{h}=X_{h-2}+c_{h-1}X_{h-1}+X_{h}=X_{h+1}=X_{h+2}=\dots=X_{2h-1}=0$.
%\end{remark}

The existence of an $(h-2)$-club of rank $h$ in $\PG(1,2^h)$ estabilished in Lemma \ref{lshmin3ruimte}, together with Theorem \ref{translation}, yields the following result. 

\begin{theorem}\label{qover4}
  For every $h\geq 3$, there exists a translation $KM$-arc of type $2^{h-2}$ in $\PG(2,2^h)$. In other words, for even $q\geq 8$, there exists a translation set $\A$ of size $q+q/4$ such that every line meets $\A$ in $0$, $2$  or $q/4$ points.
\end{theorem}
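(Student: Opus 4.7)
The plan is to obtain Theorem \ref{qover4} as a direct corollary of the construction in Theorem \ref{translation} fed with the $(h-2)$-club produced by Lemma \ref{lshmin3ruimte}. First I would specialise Lemma \ref{lshmin3ruimte} to $q=2$: this gives an $\F_2$-linear $(h-2)$-club $\C$ of rank $h$ in $\PG(1,2^h)$ whose head is the point $(1,0)$. By definition of an $\F_2$-linear set of rank $h$ in $\PG(1,2^h)$, there is an $(h-1)$-dimensional projective subspace $\mu$ of the $(2h-1)$-dimensional projective space over $\F_2$ underlying $\PG(1,2^h)$ such that $\B(\mu)=\C$, and the head property translates to the condition that $\mu$ meets the spread element $\rho$ corresponding to $(1,0)$ in a hyperplane of $\rho$ while meeting every other spread element in at most a point.

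Next I would place this configuration inside the ambient space of Theorem \ref{translation}: identify $\PG(1,2^h)$ with the line at infinity $\ell_\infty$ of $\PG(2,2^h)$ and the corresponding $(2h-1)$-space with $H \subset \PG(3h-1,2)$, so that $\mu \subset H$, $\B(\mu) = \C \subset \ell_\infty$, and the head $N = (1,0)$ equals $\B(\rho)$. It remains only to exhibit an $h$-space $\pi$ meeting $H$ precisely in $\mu$: choose any point $P \in \PG(3h-1,2) \setminus H$ and set $\pi = \langle \mu, P \rangle$.

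Now every hypothesis of Theorem \ref{translation} is satisfied with $i = h-2$, so the set $(\B(\pi) \setminus \C) \cup (\ell_\infty \setminus \C)$ is a translation KM-arc of type $2^{h-2}$ in $\PG(2,2^h)$, with translation line $\ell_\infty$ and $2^{h-2}$-nucleus $N$. The size is $2^h + 2^{h-2} = q + q/4$, as required, which yields the second formulation of the statement. There is essentially no obstacle: the nontrivial content has already been absorbed into Lemma \ref{lshmin3ruimte} (the explicit construction and the verification that the head has weight $h-2$ while all other points have weight $1$) and into Theorem \ref{translation} (the intersection analysis showing that lines meet the resulting set in $0$, $2$ or $2^i$ points and that an elation group acts transitively on the affine points).
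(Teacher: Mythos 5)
Your proposal is correct and is essentially the paper's own derivation: the paper obtains Theorem \ref{qover4} exactly by combining the $(h-2)$-club of Lemma \ref{lshmin3ruimte} (specialised to $q=2$) with the construction of Theorem \ref{translation}, and your extra remark that $\pi=\langle\mu,P\rangle$ with $P\notin H$ meets $H$ precisely in $\mu$ is a correct routine detail. One small slip, harmless because it is never used: for an $(h-2)$-club the head condition means $\mu$ meets $\rho$ in an $(h-3)$-space (codimension $2$ in $\rho$), not in a hyperplane of $\rho$ -- the latter would be the case of an $(h-1)$-club.
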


As mentioned before, it has been conjectured by Vandendriessche that the points of a KM-arc on a $t$-secant, together with the nucleus, form a linear set.  Let $\A$ be the KM-arc constructed above, with $q/4$-nucleus $N=\B(\rho)$. By the construction described in Theorem \ref{translation} we know that $\{N\}\cup(\A\cap m)$ is a linear set for all four $q/4$-secants $m$ different from $\ell_\infty$. In the following lemma, we check that the conjecture also holds for the line at infinity, which is the fifth $q/4$-secant to $\A$. The lemma will be of use later in Section \ref{VDD2}, when we will show that this family is equivalent to the family given by Vandendriessche.
\begin{lemma}\label{newfamilyremaining}
  Let $\C$ be the $(h-2)$-club in $\ell=\PG(1,q)$, $q=2^{h}$, given in Lemma \ref{lshmin3ruimte}. Then the set $(\ell\setminus\C)\cup\{(1,0)\}$ is an $\F_2$-linear set.
\end{lemma}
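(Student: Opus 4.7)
The strategy is to exhibit an explicit $\F_2$-subspace $\mu'\leq\F_{2^h}\times\F_{2^h}$ (viewed as $\F_2^{2h}$) with $\B(\mu')=(\ell\setminus\C)\cup\{(1,0)\}$. Since this target set has $2^{h-2}+1=2^{h-1}-2^{h-2}+1$ points, one anticipates it is an $(h-2)$-club of rank $h-1$ with head $(1,0)$; so I aim for $\dim_{\F_2}\mu'=h-1$.

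First I would translate membership in $\C$ into a rank condition. For $a\in\F_{2^h}$, the affine point $(a,1)$ lies in $\C$ iff there exist $(u,v)\in\F_2^2\setminus\{(0,0)\}$ and $t_1,\dots,t_{h-2}\in\F_2$ with $a(u+v\lambda)=t_1\lambda+\cdots+t_{h-2}\lambda^{h-2}+u\lambda^{h-1}$, equivalently $u(a+\lambda^{h-1})+v(a\lambda)\in W:=\text{span}_{\F_2}(\lambda,\dots,\lambda^{h-2})$. Writing $c_i\colon\F_{2^h}\to\F_2$ for the coefficient of $\lambda^i$ in the basis $\{1,\lambda,\dots,\lambda^{h-1}\}$, and $\phi(x):=(c_0(x),c_{h-1}(x))$, one has $W=\ker\phi$. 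Thus $a\in\C$ iff the columns $\phi(a+\lambda^{h-1})$ and $\phi(a\lambda)$ of the $2\times 2$ matrix $M(a)$ are $\F_2$-linearly dependent, i.e.\ iff $\det M(a)=0$.

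The crux is then computing $\det M(a)$. Writing the minimal polynomial of $\lambda$ over $\F_2$ as $X^h+m_{h-1}X^{h-1}+\cdots+m_0$, irreducibility forces $m_0=1$, whence $c_0(a\lambda)=c_{h-1}(a)$ and $c_{h-1}(a\lambda)=c_{h-2}(a)+m_{h-1}c_{h-1}(a)$. Substituting into the $2\times 2$ determinant and using that the cross-term collapses to $c_{h-1}(a)^2+c_{h-1}(a)$, which vanishes in $\F_2$, one obtains
\[
\det M(a)=c_0(a)\bigl(c_{h-2}(a)+m_{h-1}c_{h-1}(a)\bigr).
\]
The two $\F_2$-linear functionals $c_0$ and $c_{h-2}+m_{h-1}c_{h-1}$ are linearly independent on $\F_{2^h}$, so the affine part of $\ell\setminus\C$, cut out by $c_0(a)=1$ and $c_{h-2}(a)+m_{h-1}c_{h-1}(a)=1$, is a coset $c+U$ of an $(h-2)$-dimensional $\F_2$-subspace $U$ of $\F_{2^h}$.

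Finally, I would set $\mu':=\F_2\cdot(c,1)+(U\times\{0\})$; the two summands intersect trivially, so $\dim_{\F_2}\mu'=h-1$. The nonzero vectors $(u,0)$ with $u\in U\setminus\{0\}$ all represent the projective point $(1,0)$, contributing weight $h-2$, while the vectors $(c+u,1)$ with $u\in U$ represent distinct affine points of $\ell\setminus\C$, each of weight $1$. Hence $\B(\mu')=\{(1,0)\}\cup(\ell\setminus\C)$, which proves the set is an $\F_2$-linear set (and, as a byproduct, identifies it as an $(h-2)$-club of rank $h-1$). The main obstacle in this plan is the determinant collapse in the third paragraph; once that identity is established the rest is a direct verification.
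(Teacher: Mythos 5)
Your proposal is correct. It reaches the conclusion by a genuinely different route than the paper. The paper works backwards: it writes down an explicit candidate $\F_2$-linear set $\C'$ of rank $h-1$ through $(1,0)$, expressed via the coefficients of the minimal polynomial of $\lambda$, and proves $\C\cap\C'=\{(1,0)\}$ by taking a generic affine point of $\C'$ and checking that its three vector representatives whose second coordinate lies in $\{1,\lambda,1+\lambda\}$ (the only ones that could lie in the vector set $W$ of Lemma \ref{lshmin3ruimte}) do not belong to $W$; the identification $\C'=(\ell\setminus\C)\cup\{(1,0)\}$ then rests on the (implicit) size count $|\C'|=2^{h-2}+1=|\ell\setminus\C|+1$. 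You instead derive the complement directly: the equivalence ``$(a,1)\in\C$ iff $\det M(a)=0$'' together with the factorisation $\det M(a)=c_0(a)\bigl(c_{h-2}(a)+m_{h-1}c_{h-1}(a)\bigr)$ --- your computation is correct, using $m_0=1$ from irreducibility and $x^2=x$ on $\F_2$ --- identifies $\ell\setminus\C$ exactly as the coset cut out by $c_0(a)=1$ and $c_{h-2}(a)+m_{h-1}c_{h-1}(a)=1$, after which linearity, and indeed the $(h-2)$-club structure with head $(1,0)$, falls out of your subspace $\mu'$ with no counting and no guessing. The two descriptions agree: the affine points of the paper's $\C'$ are precisely the solutions of your two equations (there the polynomial coefficient you call $m_{h-1}$ is denoted $c_{h-1}$), so you have in effect re-derived, rather than verified, the paper's set. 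What your route buys is an if-and-only-if criterion that explains why the complement is linear; what the paper's buys is brevity. Both need the standing assumption $h\geq3$, which you correctly invoke for the independence of the two linear forms.
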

\begin{proof}
  Let $f(x)=x^{h}+c_{h-1}x^{h-1}+\ldots+c_{1}x+c_{0}$ be the primitive polynomial of the primitive element $\lambda\in\F_{q}$ that we used in the description of $\C$, so $f(\lambda)=0$. We look at the set
  \begin{multline*}
    \C'=\{(v_{0}+v_1\lambda+v_2\lambda^2+\ldots+v_{h-3}\lambda^{h-3}+(v_{0}+c_{h-1}v_{h-1})\lambda^{h-2}+v_{h-1}\lambda^{h-1}, v_{0})\mid\\ v_i\in\F_2,(v_{0},\dots,v_{h-1})\neq(0,\dots,0)\}\subseteq\ell\;.
  \end{multline*}
  This is clearly an $\F_2$-linear set of rank $h-1$ which contains the projective point $(1,0)$. All other projective points in $\C'$ are the points in the above notation with $v_{0}=1$. Now we prove that $\C\cap\C'=\{(1,0)\}$. Consider a point in the above set with $v_{0}=1$:
  \[
    (1+v_1\lambda+v_2\lambda^2+\ldots+v_{h-3}\lambda^{h-3}+(1+c_{h-1}v_{h-1})\lambda^{h-2}+v_{h-1}\lambda^{h-1},1)\;.
  \]
  The $\F_{q}$-scalar multiples of this vector give rise to the same projective point. The only scalar multiples that we need to look at are
  \begin{align*}
    &\left(c_{0}v_{h-1}+(1+c_{1}v_{h-1})\lambda+(v_{1}+c_{2}v_{h-1})\lambda^{2}+\ldots+(v_{h-3}+c_{h-2}v_{h-1})\lambda^{h-2}+\lambda^{h-1},\lambda\right)\ \text{and}\\
    &\left(1+c_{0}v_{h-1}+(v_{1}+1+c_{1}v_{h-1})\lambda+(v_{2}+v_{1}+c_{2}v_{h-1})\lambda^{2}+\ldots\right.\\ &\qquad\qquad+(v_{h-3}+v_{h-4}+c_{h-3}v_{h-1})\lambda^{h-3}+(1+v_{h-3}+(c_{h-2}+c_{h-1})v_{h-1})\lambda^{h-2}\\
    &\qquad\qquad\left.+(v_{h-1}+1)\lambda^{h-1},\lambda+1\right)\;.
  \end{align*}
  Using field reduction with respect to the basis $\{1,\lambda,\ldots,\lambda^{h-1}\}$ of $\F_{2^h}$ over $\F_2$, these correspond to the vectors
  \begin{align*}
    &(1,v_1,v_2,\ldots,v_{h-3},1+c_{h-1}v_{h-1},v_{h-1};1,0,\dots,0)\;,\\
    &(c_{0}v_{h-1},1+c_{1}v_{h-1},v_{1}+c_{2}v_{h-1},\ldots,v_{h-3}+c_{h-2}v_{h-1},1;0,1,0,\dots,0)\;\text{and}\\
    &(1+c_{0}v_{h-1},v_{1}+1+c_{1}v_{h-1},v_{2}+v_{1}+c_{2}v_{h-1},\ldots,\\&\qquad\qquad v_{h-3}+v_{h-4}+c_{h-3}v_{h-1},1+v_{h-3}+(c_{h-2}+c_{h-1})v_{h-1},v_{h-1}+1;1,1,0,\dots,0)\;.
  \end{align*}
  We can observe that none of these belong to the vector set $W$ from Lemma \ref{lshmin3ruimte}. The lemma follows.
\end{proof}

%\begin{remark}
  %the KM-arcs of type $8$ in $\PG(2,32)$, given in Table \ref{overviewKMarcs}(7), which were described in \cite{lim}, are translation KM-arcs of type $q/4$, with $q=32$. By Theorem \ref{translationinverse} we know they must arise from a $5$-space in $\PG(14,2)$ meeting a $9$-space corresponding to a line in a $4$-space that meets the spread element corresponding to the nucleus in a plane and all other spread elements in a point. The subspace described in Lemma \ref{lshmin3ruimte} and the previous remark (for $h=5$) describes such a $4$-space.
%\end{remark}

\section{The known KM-arcs revisited}\label{sec:revisiting}
\subsection{The arcs of Korchmáros and Mazzocca}\label{kmrevisited}
Korchmáros and Mazzocca constructed a family of KM-arcs of type $2^i$ in $\PG(2,2^h)$, where $h-i | h$. Let $L$ denote the relative trace function from $\F_{2^h}$ to $\F_{2^{h-i}}$ and let $g$ be an $o$-polynomial on $\F_{2^{h-i}}$. Then the authors show that the point set $\{(g(L(x)),x,1)\mid x\in \F_{2^h}\}$ is the affine part of a KM-arc of type $2^i$ \cite{km}. This family contains a subfamily of translation arcs; more precisely, the authors show the following.

\begin{lemma}[{\cite[Proposition 6.4]{km}}]
  The KM-arc with affine points $\{(g(L(x),x,1)\mid x\in \F_{2^h}\}$, where $L$ denotes the relative trace function from $\F_{2^h}$ to $\F_{2^{h-i}}$ is a translation arc of type $2^i$ in $\PG(2,2^h)$ if and only if $g(x)=x^{2^n}$ with $gcd(n,h-i)=1$.
\end{lemma}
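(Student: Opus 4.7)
My plan is to analyze the group of elations with axis $\ell_\infty:Z=0$ that stabilize the arc $\A$, and determine precisely when this group has the $2^h$ elements required for $\A$ to be a translation arc. First I would observe that $\ell_\infty$ is a $t$-secant of $\A$ (it carries the $2^i$ non-determined directions of the affine part), and that the $t$-nucleus is $N=(0,1,0)$. Indeed, since $L$ is surjective with kernel of size $2^i$ and $g$ is a permutation of $\F_{2^{h-i}}$, each vertical line $X=c$ with $c\in\F_{2^{h-i}}$ contains exactly $2^i$ affine arc points, and together with $\ell_\infty$ these are the $2^{h-i}+1$ $t$-secants, all through $N$. By \cite[Prop.~6.2]{km} the translation line (if it exists) must be a $t$-secant; the natural candidate is $\ell_\infty$, and I would reduce to this case, relying on the uniqueness of the translation line together with the distinguished role of $\ell_\infty$ in this coordinate description.

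An elation with axis $\ell_\infty$ has the form $\tau_{a,b}:(X,Y,Z)\mapsto(X+aZ,Y+bZ,Z)$ with $a,b\in\F_{2^h}$. Since $\tau_{a,b}(g(L(x)),x,1)=(g(L(x))+a,\,x+b,\,1)$, the elation $\tau_{a,b}$ stabilizes the affine part of $\A$ iff $g(L(x+b))+g(L(x))=a$ for every $x\in\F_{2^h}$. Using $\F_2$-linearity of $L$, substitute $y=L(x)$ and $c=L(b)$; as $x$ ranges over $\F_{2^h}$, $y$ ranges over $\F_{2^{h-i}}$, so the condition becomes: the map $y\mapsto g(y+c)+g(y)$ is constantly $a$ on $\F_{2^{h-i}}$. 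Let $T=\{b\in\F_{2^h}:\text{some such } a \text{ exists}\}$, an additive subgroup of $\F_{2^h}$ containing $\ker L$ (of order $2^i$). Asking that the stabilizer act transitively on the $2^h$ affine points forces $|T|=2^h$, hence $L(T)=\F_{2^{h-i}}$, i.e.\ the constancy condition holds for \emph{every} $c\in\F_{2^{h-i}}$.

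Evaluating at $y=0$, the constant equals $g(c)+g(0)=g(c)$ (using $g(0)=0$), so $g(y+c)+g(y)=g(c)$ for all $y,c\in\F_{2^{h-i}}$. This is precisely the statement that $g$ is $\F_2$-additive on $\F_{2^{h-i}}$, so $g(y)=\sum_{j=0}^{h-i-1}a_j y^{2^j}$ with $a_j\in\F_{2^{h-i}}$. The final step is to classify which $\F_2$-linearized polynomials are o-polynomials: for such a linearized $g$ one has $g(y+s)+g(s)=g(y)$, so Segre's criterion collapses to requiring that $y\mapsto g(y)/y=\sum a_j y^{2^j-1}$ be a permutation of $\F_{2^{h-i}}^*$. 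By Payne's classification of translation hyperovals in $\PG(2,2^{h-i})$, the only $\F_2$-linearized o-polynomials are the monomials $y^{2^n}$ with $\gcd(n,h-i)=1$, giving the forward direction. For the converse, a direct verification: with $g(y)=y^{2^n}$ the additivity of Frobenius yields $a=L(b)^{2^n}$, and $\{\tau_{L(b)^{2^n},b}:b\in\F_{2^h}\}$ is an order-$2^h$ elation group acting regularly on the affine part of $\A$.

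The main obstacles are the invocation of Payne's theorem (or a direct classification of linearized o-polynomials) and the preliminary reduction to $\ell_\infty$ as translation line. The latter is plausible from the uniqueness of the translation line and the natural role of $\ell_\infty$ in the coordinatization, but one should rigorously check that no other $t$-secant through $N$ can serve as translation line unless $\ell_\infty$ itself already does.
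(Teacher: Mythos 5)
The paper itself gives no proof of this lemma---it is quoted directly from \cite[Proposition 6.4]{km}---so your argument has to stand on its own. The core of it does: an elation with axis $\ell_\infty$ is $\tau_{a,b}\colon(X,Y,Z)\mapsto(X+aZ,Y+bZ,Z)$, it preserves the affine part iff $g(y+L(b))+g(y)=a$ identically in $y$, transitivity on the $2^h$ affine points forces this constancy for every $c=L(b)\in\F_{2^{h-i}}$ (your subgroup $T$ must be all of $\F_{2^h}$), evaluation at $y=0$ gives additivity of $g$, and Payne's classification of translation hyperovals (equivalently, of additive o-polynomials) then yields $g(x)=x^{2^n}$ with $\gcd(n,h-i)=1$; the converse check with the regular elation group $\{\tau_{L(b)^{2^n},b}\mid b\in\F_{2^h}\}$ is also correct. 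Invoking Payne is legitimate; it is the classical ingredient this statement rests on.

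The genuine gap is the reduction to $\ell_\infty$ as translation line. By the paper's definition, a translation KM-arc is translation with respect to \emph{some} line, necessarily a $t$-secant, and here there are $2^{h-i}+1$ candidates: $\ell_\infty$ and the lines $X=cZ$, $c\in\F_{2^{h-i}}$, all through the nucleus $(0,1,0)$. Your proposed justification---uniqueness of the translation line---is false in general: this very paper (Remark following Theorem \ref{transliff}) exhibits KM-arcs of type $q/4$ that are translation with respect to \emph{all} of their $q/4$-secants, and in any case uniqueness would not tell you that a hypothetical non-monomial $g$ cannot give a translation line among the secants $X=cZ$. So what you have actually proved in the ``only if'' direction is: translation with respect to $\ell_\infty$ implies $g=x^{2^n}$. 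To get the lemma as stated you must still exclude that, for some o-polynomial $g$ that is not additive, the group of elations with axis some $X=cZ$ fixing $\A$ acts transitively on $\A\setminus(X=cZ)$; note that such elations need not fix $\ell_\infty$ and must mix affine points with the $2^i$ points at infinity, so this is a genuinely different computation from the one you carried out, not a formal symmetry of it. Until that case is handled (by a direct analysis of those elation groups, or by an argument that translation with respect to any $t$-secant of this arc already forces additivity of $g$), the forward implication is incomplete.
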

Let $\A$ be a translation KM-arc of type $2^i$ in $\PG(2,2^h)$, obtained from the set
\[
  \{(L(x)^{2^n},x,1)\mid x\in \F_{2^h}\},
\]
where $gcd(h-i,n)=1$ and $L$ is the relative trace from $\F_{2^h}$ to $\F_{2^{h-i}}$. By Theorem \ref{translationinverse}, we know that the affine points of $\A$ determine an $i$-club $\C$ on the line at infinity. This yields the following.

\begin{theorem}\label{kmthm}
  The translation KM-arcs of type $2^i$ in $\PG(2,2^h)$ constructed by Korchmáros and Mazzocca can be obtained from the construction of Theorem \ref{translation} by using the $i$-club $\{((L(x)^{2^n},x)\mid x\in \F_{2^h}\}$ on the line at infinity, where $L$ denotes the relative trace from $\F_{2^h}$ to $\F_{2^{h-i}}$ and $gcd(n,h-i)=1$.
\end{theorem}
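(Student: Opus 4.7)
The plan is to apply Theorem \ref{translationinverse} to the Korchmáros--Mazzocca arc $\A$ with affine part $\{(L(x)^{2^n}, x, 1) \mid x \in \F_{2^h}\}$ and then explicitly identify the resulting $i$-club on $\ell_\infty$. By the preceding lemma, $\A$ is a translation KM-arc whenever $\gcd(n, h-i)=1$, so Theorem \ref{translationinverse} guarantees that $\A$ arises via Theorem \ref{translation} from some $\F_2$-linear $i$-club $\C = \B(\mu)$ on the line at infinity; the task is simply to describe $\C$.

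Tracing the proof of Theorem \ref{translationinverse}, the $(h-1)$-subspace $\mu \subseteq H$ corresponds under field reduction to the set of directions on $\ell_\infty$ determined by the affine points of $\A$. Since $f(x) := L(x)^{2^n}$ is $\F_2$-linear, any two distinct affine points $(f(t_1), t_1, 1)$ and $(f(t_2), t_2, 1)$ determine the direction $(f(t_1+t_2), t_1+t_2, 0)$. Writing $x = t_1+t_2$, the set of determined directions is therefore $\{(L(x)^{2^n}, x, 0) \mid x \in \F_{2^h}^\ast\}$, which, upon identifying $\ell_\infty$ with $\PG(1, 2^h)$, is precisely the stated set $\{(L(x)^{2^n}, x) \mid x \in \F_{2^h}\}$ viewed projectively.

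The main (and only) subtle point is verifying directly that this set really has the $i$-club structure promised by Theorem \ref{translationinverse}. Writing $W = \{(L(x)^{2^n}, x) \mid x \in \F_{2^h}\}$ as an $\F_2$-subspace of $\F_{2^h}^2$, the projective point $(0:1)$ is represented by $\{(0, x) \mid x \in \ker L\}$, which has $\F_2$-dimension $i$ because the relative trace $L : \F_{2^h} \to \F_{2^{h-i}}$ is surjective; so $(0:1)$ is the head with weight $i$. For any $x_0 \notin \ker L$, a proportionality $\lambda(L(x_0)^{2^n}, x_0) = (L(\lambda x_0)^{2^n}, \lambda x_0)$ forces $L(\lambda x_0) = \lambda^{2^{-n}} L(x_0)$; comparing that both sides lie in $\F_{2^{h-i}}$ yields $\lambda \in \F_{2^{h-i}}$, whereupon $\F_{2^{h-i}}$-linearity of $L$ gives $\lambda^{2^n} = \lambda$, so $\lambda \in \F_{2^{h-i}} \cap \F_{2^n} = \F_2$ by coprimality. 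Hence every other point of $\C$ has weight one, confirming the $i$-club structure and completing the identification. This coprimality step is precisely where the hypothesis $\gcd(n, h-i) = 1$ is used.
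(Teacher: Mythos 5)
Your argument is correct and takes essentially the same route as the paper: cite the Korchm\'aros--Mazzocca criterion to see the arc is a translation arc, apply Theorem \ref{translationinverse}, and use $\F_2$-linearity of $x\mapsto L(x)^{2^n}$ to identify the club with the set of determined directions $\{(L(x)^{2^n},x,0)\mid x\in\F_{2^h}^\ast\}$. Your additional direct check of the $i$-club structure (head of weight $i$ coming from $\ker L$, all other points of weight one via $\lambda\in\F_{2^{h-i}}$, $\lambda^{2^n}=\lambda$ and $\gcd(n,h-i)=1$) is precisely the paper's Theorem \ref{iclubkm} specialised to $q=2$, so nothing is missing.
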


As the existence and construction of $\F_q$-linear $i$-clubs in $\PG(1,q^h)$ seems a non-trivial problem, we include a construction extending the example of an $i$-club obtained via the construction of Korchmáros and Mazzocca to an $\F_q$-linear $i$-club in $\PG(1,q^h)$.

%of tot de p^n?
\begin{theorem} \label{iclubkm}
  Let $h-i\ |\ h$, $i\geq 1$. The point set $\C=\{(L(x)^{q^n},x)\mid x \in \F_{q^h}\}$, where $L$ denotes the relative trace from $\F_{q^h}$ to $\F_{q^{h-i}}$ and $\gcd(h-i,n)=1$ defines an $\F_q$-linear $i$-club of rank $h$ in $\PG(1,q^h)$.
\end{theorem}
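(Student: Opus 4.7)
The plan is to verify directly the three defining properties of an $\F_q$-linear $i$-club of rank $h$: that $\C=\B(\mu)$ for some $h$-dimensional $\F_q$-subspace $\mu$, that one point of $\C$ (the head) has weight $i$, and that every other point of $\C$ has weight $1$. I set $\phi:\F_{q^h}\to\F_{q^h}^2$, $x\mapsto(L(x)^{q^n},x)$, and $\mu=\phi(\F_{q^h})$.

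The map $\phi$ is $\F_q$-linear (since $L$ and Frobenius are) and injective (read off the second coordinate), so $\mu$ is an $h$-dimensional $\F_q$-subspace and $\C=\B(\mu)$ has rank $h$. The spread element of $(0,1)$ consists of the vectors $(0,y)$, and $\phi(x)$ lies in it precisely when $L(x)=0$; since $L:\F_{q^h}\to\F_{q^{h-i}}$ is $\F_q$-linear and surjective, $\ker L$ has $\F_q$-dimension $i$, so $(0,1)\in\C$ with weight $i$. The point $(1,0)$ is not in $\C$, since a nonzero preimage would require $x=0$. Every remaining point of $\C$ can be written as $(c,1)$ with $c\in\F_{q^h}^{*}$, and its weight equals $\dim_{\F_q}\ker\Phi_c$ where $\Phi_c(x):=L(x)^{q^n}-cx$; the task reduces to showing this dimension is at most $1$.

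The core calculation is as follows. If $x\in\ker\Phi_c$ with $x\neq 0$, then $cx=L(x)^{q^n}\in\F_{q^{h-i}}$, so $x$ lies on the $\F_{q^{h-i}}$-line $c^{-1}\F_{q^{h-i}}\subset\F_{q^h}$. Setting $y=cx\in\F_{q^{h-i}}$, the equation $\Phi_c(x)=0$ becomes $L(c^{-1}y)^{q^n}=y$. Now $y\mapsto L(c^{-1}y)$ is $\F_{q^{h-i}}$-linear on the one-dimensional $\F_{q^{h-i}}$-space $\F_{q^{h-i}}$, hence equals multiplication by $m_c:=L(c^{-1})$, and the equation reduces to $(m_cy)^{q^n}=y$, i.e.\ $y^{q^n-1}=m_c^{-q^n}$ whenever $m_c\neq 0$ (and forces $y=0$ when $m_c=0$). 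The $(q^n-1)$-th power map on $\F_{q^{h-i}}^{*}$ has kernel of size $\gcd(q^n-1,q^{h-i}-1)=q^{\gcd(n,h-i)}-1=q-1$, the last equality using the hypothesis $\gcd(n,h-i)=1$; hence the fiber over $m_c^{-q^n}$ is either empty or of size $q-1$. Consequently $|\ker\Phi_c|\in\{1,q\}$, forcing every point of $\C$ distinct from $(0,1)$ to have weight exactly $1$.

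The main obstacle is the reduction of $\ker\Phi_c$ to the one-dimensional $\F_{q^{h-i}}$-subspace $c^{-1}\F_{q^{h-i}}$: once the equation is confined there, $\F_{q^{h-i}}$-linearity of $L(c^{-1}\cdot)$ turns it into a Frobenius equation on $\F_{q^{h-i}}$ whose solution count is controlled precisely by the coprimality $\gcd(n,h-i)=1$ via the standard identity $\gcd(q^a-1,q^b-1)=q^{\gcd(a,b)}-1$. The remaining verifications are routine.
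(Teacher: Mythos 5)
Your proof is correct and follows essentially the same route as the paper: both verify the weight of $(0,1)$ via the $i$-dimensional kernel of $L$, and force all other weights to be $1$ by observing that the relevant scalar lies in $\F_{q^{h-i}}$ and is fixed (up to Frobenius twist) by $x\mapsto x^{q^n}$, so that $\gcd(n,h-i)=1$ pins it to $\F_q$. The only cosmetic difference is that you phrase this last step multiplicatively, via $\gcd(q^n-1,q^{h-i}-1)=q-1$ and a count of the kernel of $\Phi_c$, whereas the paper argues directly that two vectors defining the same point are $\F_q$-proportional.
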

\begin{proof}
  Since $L(ax+y)^{q^n}=aL(x)^{q^n}+L(y)^{q^n}$ for $a\in \F_q$ and $x,y\in \F_{q^h}$, $\C$ defines an $\F_q$-linear set which is clearly of rank $h$. The point $(0,1)$ is defined by all non-zero vectors $(0,t)$ with $L(t)^{q^n}=0$, and there are $(q^i-1)$ such vectors. Moreover, if for some $x,y\in \F_{q^h}$ with $L(y)\neq0$ the vectors $(L(x)^{q^n},x)$ and $(L(y)^{q^n},y)$ define the same projective point, then, by using that $L(x)^{q^n}/L(y)^{q^n}\in \F_{q^{h-i}}$, we obtain that $y=\lambda x$ for some $\lambda\in \F_{q^{h-i}}$. But this in turn yields that $\lambda^{q^n}=\lambda$. Since $\gcd(h-i,n)=1$, this implies that $\lambda\in \F_q$, and hence, that $\C$ is an $\F_q$-linear $i$-club in $\PG(1,q^h)$.
\end{proof}

%\begin{remark} Direct construction with coordinates of the $(h-1)$-space defining $(L(x)^2,x,0)$.
%\end{remark}
\subsection{The arcs of Gács and Weiner}\label{gwrevisited}
In the paper \cite{gw}, Gács and Weiner give a geometric construction of the KM-arcs of type $2^i$ in $\PG(2,2^h)$, $h-i\ |\ h$ described by Korchmáros and Mazzocca (see Section \ref{kmrevisited}). Furthermore, they were able to extend this idea to a construction of KM-arc of type $2^{i+1}$ in $\PG(2,2^h)$, $h-i\ |\ h$. Finally, they gave a recursive construction, starting from a KM-arc of type $2^j$ in $\PG(2,2^{h-i})$, for a KM-arc of type $2^{i+j}$ in $\PG(2,2^{h})$, $h-i\ |\ h$. It is worth mentioning that the authors end their paper with an algebraic description of the three families.

We now state their geometric construction in a different setting using field reduction.

\begin{lemma} \label{lemGW}
  Let $\D'$ be the Desarguesian $(s-1)$-spread in $\PG(3s-1,2^r)$ obtained by applying field reduction to the points of $\PG(2,(2^r)^s)$. Let $\pi$ be a plane of $\PG(3s-1,2^r)$ such that $\B(\pi)$ spans the plane $\PG(2,2^{rs})$. Let $P$ be a point of $\pi$, and let $\rho\in\D'$ be the spread element containing $P$. Let $\mu$ be a hyperplane of $\rho$, not through $P$. Let $H$ be a hyperoval in $\pi$ and let $K$ be the cone with vertex $\mu$ and base $H$, i.e. $K=\bigcup_{Q\in H}\langle Q,\mu\rangle$. Then $\B(K)\setminus \B(\rho)$ is a KM-arc of type $2^{rs-r}$ in $\PG(2,2^{rs})$ if $P\in H$ and of type $2^{rs-r+1}$ if $P\notin H$.
  \par Moreover, if $H'$ is a KM-arc of type $2^j$ in $\pi$ with $2^j$-nucleus $P$ , then $\B(K')=\bigcup_{Q\in H'}\langle Q,\mu\rangle\setminus\B(\rho)$ is a KM-arc of type $2^{rs-r+j}$ in $\PG(2,2^{rs})$.
\end{lemma}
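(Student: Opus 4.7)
The plan is to analyse the cone $K$ via the projection $\phi\colon V\setminus\tilde\mu\to V/\tilde\mu=:\Pi$ from the vertex $\mu$, where $\Pi$ has projective dimension $2s$. Since $\rho\cap\pi=\{P\}$ and $P\notin\mu$, we have $\mu\cap\pi=\emptyset$, so $\pi$ projects injectively onto a plane $\bar\pi\subset\Pi$, and $P$ and $\rho$ both map to the same point $\bar P=\bar\rho\in\bar\pi$. The hyperoval $H$ (or KM-arc $H'$) maps bijectively onto a hyperoval (respectively, KM-arc) $\bar H\subset\bar\pi$, and each cone line $\langle Q,\mu\rangle\setminus\mu$ collapses to the single point $\bar Q$. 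Every spread element $R\neq\rho$ is disjoint from $\mu$, so it projects injectively to an $(s-1)$-subspace $\bar R$ of $\Pi$, and a direct vector-space check yields $\bar\rho\notin\bar R$. The central correspondence to establish first is: $R\cap\langle Q,\mu\rangle\neq\emptyset$ if and only if $\bar Q\in\bar R$; consequently $R\in\B(K)\setminus\{N\}$ if and only if $\bar R\cap\bar H\neq\emptyset$.

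For a line $\ell$ of $\PG(2,2^{rs})$ through $N$, the corresponding $(2s-1)$-subspace $L$ contains $\rho$, and I would work inside the quotient $L/\mu\cong\PG(s,2^r)$. A dimension count shows that the $2^{rs}$ spread elements of $\ell\setminus\{N\}$ project bijectively onto the $2^{rs}$ hyperplanes of $L/\mu$ not through $\bar\rho$. The intersection $\ell_L:=L\cap\pi$ is generically a line of $\pi$ through $P$ (the spanning hypothesis on $\B(\pi)$ rules out the pathology $\pi\subseteq L$), and a dimension count gives $L\cap K=\bigcup_{Q\in H\cap\ell_L}\langle Q,\mu\rangle$, because cone lines over points of $H$ off $\ell_L$ meet $L$ only in $\mu$. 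By the central correspondence, an $R\in\ell\setminus\{N\}$ meets $K$ exactly when its image hyperplane in $L/\mu$ contains some $\bar Q$ with $Q\in H\cap\ell_L$. The (KM-)arc property gives $|H\cap\ell_L|\in\{0,2,2^j\}$, and the points $\bar\rho$ and the $\bar Q$ are collinear on $\overline{\ell_L}\subset L/\mu$, so any hyperplane through two of the $\bar Q$ contains the whole line and hence $\bar\rho$. A short inclusion--exclusion then yields exactly $k\cdot 2^{r(s-1)}$ valid hyperplanes, where $k$ is the number of points of $H\cap\ell_L$ other than $P$; this reproduces the advertised intersection numbers $0$, $2^{rs-r}$, and $2^{rs-r+j}$.

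For a line $\ell\not\ni N$, the corresponding $(2s-1)$-subspace $L$ is a union of spread elements different from $\rho$, which forces $L\cap\rho=L\cap\mu=\emptyset$. Hence $L$ projects injectively onto a hyperplane $\bar L$ of $\Pi$ not containing $\bar P$. By the correspondence, $L\cap K$ is in bijection with $\bar L\cap\bar H$, which equals the intersection of the line $\bar L\cap\bar\pi$ (a line of $\bar\pi$ missing $\bar P$) with the (KM-)arc $\bar H$. Since all $2^j$-secants of $\bar H$ pass through $\bar P$, this intersection has size $0$ or $2$, and pulling back, $\ell$ meets $\B(K)\setminus\{N\}$ in at most $2$ points.

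The main remaining obstacle is to rule out $1$-secants, that is, to exclude the case where the two points of $L\cap K$ from the previous paragraph collapse to a single spread element. Rather than argue this directly, I would close the proof with two global counts in $\PG(2,2^{rs})$: summing the per-line counts from the second paragraph over the $2^{rs}+1$ lines through $N$ shows $|\B(K)\setminus\{N\}|=q+t$, and then the identities $\sum_\ell|\ell\cap(\B(K)\setminus\{N\})|=(q+t)(q+1)$ and $\sum_\ell\binom{|\ell\cap(\B(K)\setminus\{N\})|}{2}=\binom{q+t}{2}$, combined with the $0$-, $2$-, $t$-secant possibilities from the previous two paragraphs, force the number of $1$-secants to be zero. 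Hence every line of $\PG(2,2^{rs})$ meets $\B(K)\setminus\{N\}$ in $0$, $2$, or $t$ points, with the $t$-secants concurrent at $N$, so $\B(K)\setminus\{N\}$ is a KM-arc of type $t$, as claimed.
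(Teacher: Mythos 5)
Your proof is correct, and it departs from the paper's proof precisely at the points where the paper is terse. The published argument stays inside $\PG(3s-1,2^r)$: for a line through $N=\B(\rho)$ it reads the intersection size off from whether the corresponding $(2s-1)$-space meets $\pi$ in $\{P\}$ or in a line through $P$, and for the remaining lines it intersects with the $(s+1)$-space $\langle\pi,\rho\rangle$, obtaining a line skew to the vertex that meets the cone in $0$ or $2$ points; it then states the intersection numbers directly, leaving implicit that distinct cone points obtained this way lie on distinct spread elements (this is needed both so that the two generators arising when $P\notin H$ contribute $2\cdot 2^{r(s-1)}$ distinct spread elements, and to exclude tangent lines). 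You make both issues explicit: your hyperplane count in $L/\mu$, exploiting that the relevant points $\bar Q$ are collinear with $\bar\rho$, settles the lines through $N$ rigorously, and the tangent-line problem is handled by your closing double count, which does close: writing $q=2^{rs}$ and $t$ for the type, the through-$N$ analysis gives $q/t+1$ $t$-secants and $|\A|=q+t$, and then $\sum_\ell|\ell\cap\A|=(q+t)(q+1)$ and $\sum_\ell\binom{|\ell\cap\A|}{2}=\binom{q+t}{2}$ force the number of $1$-secants to be zero. The trade-off is that your finish is combinatorial and indirect where the paper's is geometric (and unstated); for completeness, the collapse can also be excluded directly: if the two points of $L\cap K$ lay on one spread element $R$, then $\langle\rho,R\rangle$, the $(2s-1)$-space of the line $\langle N,\B(R)\rangle$, would contain $\mu$ and both points, hence both generators, hence $P$, $Q_1$, $Q_2$; these are not collinear (otherwise the line joining the two points would lie in the $s$-space $\langle Q_1Q_2,\mu\rangle$ and meet its hyperplane $\rho$, contradicting $R\cap\rho=\emptyset$), so $\pi\subseteq\langle\rho,R\rangle$ and $\B(\pi)$ would lie on a line of $\PG(2,2^{rs})$ -- contradicting the spanning hypothesis, the same hypothesis your proof uses only to exclude $\pi\subseteq L$.
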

\begin{proof}
  Let $\A$ be the set  $\B(K)\setminus \B(\rho)$ and $\A'=\B(\K')\setminus \B(\rho)$. A line $L$ of $\PG(2,2^{rs})$ corresponds to an $(2s-1)$-dimensional space $\ell$ in $\PG(3s-1,2^r)$, spanned by spread elements.  First suppose that $\rho\subset \ell$, then either $\ell$ meets $\pi$ in the point $P$ or in a line through the point $P$. In the first case, $L$ is skew from $\A$ and $\A'$, in the second, $\ell$ meets $K$ outside $\rho$ either in one or two $(s-1)$-spaces through $\mu$, depending on whether or not $P$ is a point of the hyperoval. This implies that in this case, $L$ meets $\A$ in $2^i$ or $2^{i+1}$ points, with $i=r(s-1)$. As $\L$ meets $K'$ outside $\rho$ in $2^j$ points, we see that $L$ meets $\A'$ in $2^{rs-r+j}$ points. 
  \par Now suppose that $\ell$ does not contain $\rho$. Let $\xi$ be the $(s+1)$-space $\langle \pi,\rho\rangle$, then $\ell$ meets $\xi$ in a line.  Since $K$ is a cone with base a hyperoval, a line skew from the vertex meets this cone in $0$ or $2$ points. Consequently, $L$ contains $0$ or $2$ points of $\A$. Similarly, $K'$ is a cone with base a $KM$-arc of type $2^j$, hence, a line not meeting the subspace spanned by the vertex of the cone and the nucleus of the $KM$-arc meets $K'$ in $0$ or $2$ points.
\end{proof}

Using the algebraic description provided by the authors of \cite{gw}, we see that if we use a translation hyperoval $H$ in the construction of Lemma \ref{lemGW}, we end up with a translation KM-arc, hence, we know that it corresponds to an $i$-club. When the point $P$ is contained in $H$, we find the $i$-club of Theorem \ref{kmthm} but when the point $P$ is not contained in $H$, we can exploit the connection between $i$-clubs and translation KM-arcs to find an $\F_2$-linear $i$-club in $\PG(1,2^h)$ where $h-i+1\ | \ h$.

\begin{theorem}\label{gwthm}
  The translation KM-arcs of type $2^i$, $i=rt-r+1$ in $\PG(2,2^h)$, $h=rt$ constructed by G\'acs and Weiner are $\PGL$-equivalent to the ones obtained from Theorem \ref{translation} by using the $i$-club $\{x_0^{2^n}-ax_0, x_0+\sum_{i=1}^{t-1}x_i\omega^i \mid x\in \F_{2^h}\}$ in $\PG(1,2^h)$, for some $a\in \F_{2^r}^\ast$ and $\gcd(r,n)=1$ where $\{1,\omega,\ldots,\omega^{t-1}\}$, $t>1$ a basis for $\F_{2^{h}}$ over $\F_{2^{r}}$.
\end{theorem}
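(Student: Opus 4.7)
The plan is to follow the same approach as in the proof of Theorem \ref{kmthm}: first verify that the G\'acs--Weiner cone construction of Lemma \ref{lemGW} applied to a translation hyperoval in $\pi$ produces a translation KM-arc, and then compute its affine part in coordinates so that Theorem \ref{translationinverse} identifies the corresponding $\F_{2}$-linear $i$-club.

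Concretely, I would fix a basis $\{1, \omega, \ldots, \omega^{t-1}\}$ of $\F_{2^{rt}}$ over $\F_{2^{r}}$ and take $\pi \subset \PG(3t-1, 2^{r})$ to be the plane corresponding under field reduction to the $\F_{2^{r}}$-rational subplane of $\PG(2, 2^{rt})$, so that points of $\pi$ are represented by triples $(a, b, c) \in \F_{2^{r}}^{3}$. Set $P = (1,1,0) \in \pi$, let $\rho = \{(\lambda, \lambda, 0) : \lambda \in \F_{2^{rt}}\}$ be the spread element through $P$, and take $\mu = \{(\lambda, \lambda, 0) : \lambda_{0} = 0\} \subset \rho$, where $\lambda_{0}$ denotes the coefficient of $\omega^{0} = 1$. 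As $H$ take the translation hyperoval $\{(u^{2^{n}}, u, 1) : u \in \F_{2^{r}}\} \cup \{(1,0,0), (0,1,0)\}$ in $\pi$, with $\gcd(n, r) = 1$; one checks that $P \notin H$. A generic point of $K \setminus \rho$ can be written as $Q + v$ with $Q = (u^{2^{n}}, u, 1) \in H$ and $v = (\lambda, \lambda, 0)$, $\lambda_{0} = 0$, which corresponds to the point $(u^{2^{n}} + \lambda, u + \lambda, 1)$ of $\PG(2, 2^{rt})$. Writing $y = u + \lambda$, the constraint $\lambda_{0} = 0$ forces $u = y_{0}$ and $\lambda = \sum_{i=1}^{t-1} y_{i} \omega^{i}$, so the affine part of $\A = \B(K) \setminus \B(\rho)$ is the graph $\{(y_{0}^{2^{n}} + y + y_{0}, y, 1) : y \in \F_{2^{h}}\}$. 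A direct computation shows that the group of elations $(X, Y, Z) \mapsto (X + (c_{0}^{2^{n}} + c + c_{0})Z, Y + cZ, Z)$, $c \in \F_{2^{h}}$, preserves this graph and acts transitively on it, so $\A$ is a translation KM-arc with translation line $Z = 0$.

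By Theorem \ref{translationinverse}, the associated $\F_{2}$-linear $i$-club on the line at infinity is the image of $y \mapsto (y_{0}^{2^{n}} + y + y_{0}, y)$ in $\PG(1, 2^{h})$. The $\PGL(2, 2^{h})$-map $(X, Y) \mapsto (X + Y, Y)$ sends this to $\{(y_{0}^{2^{n}} + y_{0}, y) : y \in \F_{2^{h}}\}$, and applying $(X, Y) \mapsto (c^{2^{n}} X, cY)$ with $c \in \F_{2^{r}}^{\ast}$ subsequently produces $\{(x_{0}^{2^{n}} + c^{2^{n} - 1} x_{0}, x) : x \in \F_{2^{h}}\}$. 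Since $\gcd(n, r) = 1$ implies $\gcd(2^{n} - 1, 2^{r} - 1) = 1$, the map $c \mapsto c^{2^{n} - 1}$ is a bijection of $\F_{2^{r}}^{\ast}$, so every $a \in \F_{2^{r}}^{\ast}$ is realised; in characteristic two this is exactly the $i$-club appearing in the statement. The main obstacle I anticipate is the coordinate bookkeeping through the two layers of field reduction and the verification that every translation KM-arc of type $2^{rt - r + 1}$ produced by Lemma \ref{lemGW} arises, up to $\PGL$-equivalence, from a configuration of the simple form chosen above, rather than from an inequivalent choice of $P$, $\mu$, or embedding of $\pi$.
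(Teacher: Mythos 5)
Your computation for the canonical configuration is correct: with $\pi$ the plane corresponding to the subplane $\PG(2,2^r)$, $P=(1,1,0)$, $\mu=\{(\lambda,\lambda,0)\mid\lambda_{0}=0\}$ and $H$ the standard translation hyperoval, the affine part of $\B(K)\setminus\B(\rho)$ is indeed the graph $\{(y_{0}^{2^{n}}+y+y_{0},y,1)\mid y\in\F_{2^{h}}\}$, your elations show this is a translation arc with translation line $Z=0$ and nucleus $(1,1,0)$, and the set of determined directions is the club $\{(y_{0}^{2^{n}}+y+y_{0},y)\}$, which your $\PGL(2,2^{h})$-moves (and the bijectivity of $c\mapsto c^{2^{n}-1}$ on $\F_{2^{r}}^{\ast}$) bring to the form $\{(x_{0}^{2^{n}}+ax_{0},x)\}$ of the statement. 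For what it is worth, the paper gives no written proof of this theorem: it appeals to the algebraic description in the G\'acs--Weiner paper combined with Theorem \ref{translationinverse}, which amounts to exactly the identification of the direction set that you carry out explicitly, so your calculation supplies more detail than the text does.

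The genuine gap is the one you flag at the end and then leave open: the theorem concerns \emph{all} translation KM-arcs of type $2^{rt-r+1}$ constructed by G\'acs and Weiner, whereas you verify the claim for a single configuration of Lemma \ref{lemGW}. That lemma allows an arbitrary plane $\pi$ with $\B(\pi)$ spanning, an arbitrary point $P\in\pi\setminus H$, an arbitrary hyperplane $\mu$ of $\rho$ avoiding $P$, and an arbitrary translation hyperoval $H$; you show neither that every such choice producing a translation arc is carried to your canonical configuration by a collineation preserving the Desarguesian spread, nor which members of the G\'acs--Weiner family of this type are translation arcs in the first place (the theorem quantifies over exactly those). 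As it stands, the argument proves only that your particular arc arises from the stated club. The natural way to close this is the route the paper hints at: start from the explicit algebraic form of the G\'acs--Weiner arcs, note that for the translation members the affine part is, in suitable coordinates, already the graph of an $\F_{2}$-linear map (via \cite[Proposition 6.3]{km}, as in the proof of Theorem \ref{translationinverse}), and read off the determined directions; alternatively, prove a transitivity statement for the spread-preserving group on the configurations $(\pi,P,\mu,H)$ of Lemma \ref{lemGW} that yield translation arcs.
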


Also in this case, we can extend the construction of an $i$-club in $\PG(1,2^h)$ where $h-i+1 \ | \ h$ to a construction of an $i$-club in $\PG(1,q^h)$,  $h-i+1 \ | \ h$.

\begin{theorem}\label{iclubgw}
  Let $f: \F_{q^r}\mapsto \F_{q^r}$, $r>1$ be an $\F_q$-linear map such that $\{(x,f(x))\mid x\in \F_{q^r}^\ast\}$ defines a scattered linear set in $\PG(1,q^r)$. Consider the point set $\C=\{(f(x_0)-ax_0,bx_0+\sum_{i=1}^{t-1}x_i\omega^i)\mid x_i \in \F_{q^r}, (x_0\ldots,x_{t-1})\neq (0,\ldots,0)\}$ for some fixed $a,b\in \F_{q^r}$ and $\{1,\omega,\ldots,\omega^{t-1}\}$, $t>1$ a basis for $\F_{q^{rt}}$ over $\F_{q^{r}}$.
  %\par OPMERKING: ik denk dat $x\neq0$ op de eerste lijn, en $(x_{0},\dots,x_{t-1})\neq(0,\dots,0)$ in definitie $\C$. Wat met $r=1$ en $t=1$? AKKOORD
  \begin{itemize}
    \item if $f(x)-ax=0$ does not have non-zero sulutions then $\C$ defines an $\F_q$-linear $r(t-1)$-club of rank $rt$ in $\PG(1,q^{rt})$.
    \item if $f(x)-ax=0$ does have a non-zero solution, then $\C$ defines an $\F_q$-linear $r(t-1)+1$-club of rank $rt$ in $\PG(1,q^{rt})$.
  \end{itemize}
  Hence, there always exist $\F_q$-linear $i$-clubs of rank $h$ in $\PG(1,q^h)$ if $h-i \ | \ h$ and if $h-i+1 \ | \ h$.
\end{theorem}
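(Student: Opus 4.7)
The plan splits naturally into three steps: confirm $\C$ has rank $rt$, compute the weight of the unique special point $(0,1)$, and show every other point has weight $1$.

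First I would consider the $\F_q$-linear map
\[
  \Phi\colon \F_{q^r}^t \to \F_{q^{rt}}^2,\quad (x_0,\dots,x_{t-1}) \mapsto \bigl(f(x_0)-ax_0,\; bx_0 + \sum_{i=1}^{t-1} x_i \omega^i\bigr),
\]
and set $U=\Phi(\F_{q^r}^t)$, so that $\C = \B(U)$. Assuming $b\neq 0$ (implicit here, as in Theorem~\ref{gwthm}), if $\Phi(x)=0$ then the $\F_{q^r}$-basis independence of $\{1,\omega,\ldots,\omega^{t-1}\}$ forces $bx_0 = 0$ and $x_i=0$ for $i\geq 1$, so $x=0$. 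Hence $\Phi$ is injective and $\C$ has rank $rt$.

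To identify the head, I would note that $\Phi(x)$ has first coordinate zero exactly when $x_0 \in K := \{x\in\F_{q^r}:f(x)=ax\}$, and then every nonzero such $\Phi(x)$ represents the projective point $(0,1)$. Free choice of $x_0\in K$ and $x_1,\dots,x_{t-1}\in \F_{q^r}$ gives an $\F_q$-subspace of $U$ of dimension $\dim_{\F_q} K + r(t-1)$, which is exactly the weight of $(0,1)$ in $\C$. The hypothesis that $\{(x,f(x))\}$ is a scattered linear set means that the point $(1,a)\in\PG(1,q^r)$ either lies in it (in which case it has weight $1$, so $\dim_{\F_q} K=1$) or does not (so $\dim_{\F_q}K=0$), realising the two bullets.

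For any other point $P = \B(\Phi(y_0,\ldots,y_{t-1}))\in\C$, the first coordinate $\alpha = f(y_0)-ay_0$ is nonzero, which in particular forces $y_0\neq 0$. If $\Phi(x) = \mu(\alpha,\beta)$ for some $\mu\in\F_{q^{rt}}$, then from the first coordinate (both sides in $\F_{q^r}$ with $\alpha\in\F_{q^r}^*$) we get $\mu\in\F_{q^r}$. Expanding the second coordinate in the basis $\{1,\omega,\ldots,\omega^{t-1}\}$ now forces $x_0=\mu y_0$ and $x_i=\mu y_i$ for $i\geq 1$. Substituting back into the first coordinate reduces to $f(\mu y_0) = \mu f(y_0)$, and the scattered property applied to the nonzero point $(y_0,f(y_0))\in\PG(1,q^r)$ forces $\mu\in\F_q$. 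Thus $P$ has weight $1$.

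The main subtlety is bookkeeping across the nested fields $\F_q\subset\F_{q^r}\subset\F_{q^{rt}}$: each coordinate equation must be read in the smallest field possible so that one can successively localise $\mu$ from $\F_{q^{rt}}$ down to $\F_{q^r}$ and finally to $\F_q$ via the scattered hypothesis. The closing existence statement then follows by combining the two bullets (taking any scattered $\F_q$-linear map on $\F_{q^r}$, for instance $f(x)=x^q$ for $r\geq 2$, and choosing $a$ so that $f - a\cdot\mathrm{id}$ has, respectively has no, nonzero kernel) together with Theorem~\ref{iclubkm} to cover the case $h-i \mid h$.
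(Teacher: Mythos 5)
Your proposal is correct and follows essentially the same route as the paper's proof: it establishes rank $rt$, identifies $(0,1)$ as the head by noting that the kernel of $f-a\cdot\mathrm{id}$ has $\F_q$-dimension $0$ or $1$ by scatteredness, shows every other point has weight $1$ by first localising $\mu$ to $\F_{q^r}$ via the first coordinate and then to $\F_q$ via the scattered hypothesis, and finally derives the existence statement by a suitable choice of $a$ (together with Theorem \ref{iclubkm}). Your explicit flagging of the implicit assumption $b\neq 0$ and the cleaner justification that $\mu\in\F_{q^r}$ before comparing basis coefficients are minor refinements of the paper's argument rather than a different approach.
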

\begin{proof}
  It is clear from the definition that $\C$ is an $\F_q$-linear set of rank $rt$. If $f(x)-ax=0$ does not have non-zero solutions, then there are $q^{r(t-1)}-1$ vectors that determine the point $(0,1)$.
  \par If $\bar{x}\in\F_{q^{r}}$ is a non-zero solution of $f(x)-ax=0$, then clearly $\lambda\bar{x}$ with $\lambda\in \F_{q}^\ast$ is another non-zero solution. If there would be a non-zero solution $\bar{y}$ to $f(x)-ax=0$, with $\bar{y}$ not an $\F_q$-multiple of $\bar{x}$, then this would imply that the point $(1,a)$ contained in the scattered linear set $\{(x,f(x))\mid x\in \F_{q^r}\}$ is defined by $\bar{x}$ and $\bar{y}$ which are not $\F_q$-multiples, a contradiction. The $q^{r(t-1)}$ values $b\bar{x}+\sum x_i\omega^i$ are different for every choice of $\bar{x}$, so we find in total $q^{r(t-1)+1}-1$ different vectors determining the point $(0,1)$.
  %\par VRAAG. In volgende paragraaf: behoort $\mu$ tot $\F_{q^{r}}$ of tot $\F_{q^{rt}}$?
  \par Now we prove that all other points of $\C$ are determined by a unique vector, up to $\F_q$-multiples. Suppose that $(\mu(f(x_0)-ax_0),\mu(bx_0+\sum x_i\omega^i))=(f(x_0')-ax_0',bx_0'+\sum x_i'\omega^i)$, $\mu \in \F_{q^{rt}}$. We find that $\mu\sum x_i \omega^i=\frac{b}{a}f(x_0')-\mu \frac{b}{a}f(x_0)+\sum x_i' \omega^i$. This implies that $x_i '=\mu x_i$ for $i\in \{1,\ldots,t-1\}$ and that $f(x_0')=\mu f(x_0)$. This in turn implies that $x_0'=\mu x_0$. Finally, since $f$ defines a scattered linear set, we see that $f(\mu x_0')=\mu f(x_0)$ if and only if $\mu \in \F_q$.
  \par We still need to prove the final statement. It is clear that we can always choose $a\in \F_{q^r}$ in such a way that $f(x)-ax=0$ has a non-zero solution and since $|\{f(x)/x\mid x\in \F^{*}_{q^r}\}|<|\F_{q^r}|$, we can also choose an $a\in \F_{q^r}$ such that $f(x)-ax$ has no non-zero solution. Putting $h=rt$ and $i=rt-r+1$ finishes the proof.
\end{proof}

\subsection{The arcs in \texorpdfstring{$\PG(2,32)$}{PG(2,32)} of Limbupasiriporn and Key, MacDonough and Mavron}
\label{computer}

In \cite{lim}, Limbupasiriporn constructs KM-arcs of type $8$ in $\PG(2,32)$. As these are all translation arcs, by Theorem \ref{translationinverse} they can be obtained from the construction of Theorem \ref{translation}. This implies that they correspond to a $3$-club of rank $5$ in $\PG(1,32)$. By computer, we checked that these $3$-clubs are all $\PGL$-equivalent, which implies the following.
\begin{theorem}
  All translation KM-arcs of type $8$ in $\PG(2,32)$ are $\PGL$-equivalent to the example from Theorem \ref{qover4}.
\end{theorem}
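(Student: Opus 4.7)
The plan is to reduce the statement to a problem about $\F_2$-linear $3$-clubs of rank $5$ in $\PG(1,32)$. By Theorem \ref{translationinverse}, every translation KM-arc $\A$ of type $8$ in $\PG(2,32)$ arises from the construction of Theorem \ref{translation} using some $3$-club $\C_\A$ on the translation line, and the example of Theorem \ref{qover4} comes from the explicit $3$-club $\C_0$ given by Lemma \ref{lshmin3ruimte} with $h=5$. Hence it suffices to show that $\C_\A$ is $\PGL(2,32)$-equivalent to $\C_0$ for each of the translation KM-arcs constructed in \cite{lim}.

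The key observation is that $\PGL(2,32)$-equivalence of $3$-clubs on $\ell_\infty$ lifts to $\PGL(3,32)$-equivalence of the associated KM-arcs. Any $\phi\in\PGL(2,32)$ acting on $\ell_\infty$ extends to some $\Phi\in\PGL(3,32)$ fixing $\ell_\infty$ setwise; applying $\Phi$ to $\A$ yields a translation KM-arc whose associated $3$-club is $\C_\A^{\phi}$. Moreover, any two translation KM-arcs arising from the same $3$-club on $\ell_\infty$ are $\PGL(3,32)$-equivalent, since their defining $h$-spaces $\pi,\pi'$ meet $H$ in the same $(h-1)$-space $\mu$ and are therefore interchanged by an elation of $\PG(3h-1,2)$ with axis $H$, which descends to an elation of $\PG(2,2^h)$ (this is essentially the argument already used at the end of the proof of Theorem \ref{translation}).

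The concrete verification proceeds as follows. For each translation KM-arc $\A$ of type $8$ listed in \cite{lim}, extract the associated $3$-club $\C_\A$ as the set of directions not determined by the affine part of $\A$ (cf.\ the proof of Theorem \ref{translationinverse}); this is a $25$-point subset of $\PG(1,32)$. Do the same for $\C_0$, read off directly from Lemma \ref{lshmin3ruimte}. Then check by computer, for each pair $(\C_\A,\C_0)$, whether there is an element of $\PGL(2,32)$ sending $\C_\A$ onto $\C_0$. One may restrict the search to elements sending the head of $\C_\A$ (the unique point of weight $3$, recognisable as the $8$-nucleus of $\A$) onto the head of $\C_0$; the relevant stabiliser in $\PGL(2,32)$ has order $32\cdot 31$, so the search is immediate.

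The main obstacle is conceptual rather than technical: no general classification of $\F_q$-linear $i$-clubs of rank $h$ in $\PG(1,q^h)$ is currently known for $1<i<h-1$, so the proof is inherently computational for this specific instance. However, the small size of $\PGL(2,32)$ and of the $3$-clubs involved makes the verification in GAP or Magma entirely routine.
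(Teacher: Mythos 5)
Your reduction is the right one, and it matches the paper's implicit reasoning: by Theorem \ref{translationinverse} every translation KM-arc of type $8$ in $\PG(2,32)$ corresponds to a $3$-club of rank $5$ on its translation line, equivalence of clubs lifts to equivalence of arcs, and arcs built from the same club (same $\mu$, different $\pi$) are identified by an elation with axis $\ell_\infty$ as in the proof of Theorem \ref{translation}. The problem is the scope of your computational step. After the reduction, what you must verify is that \emph{every} $\F_2$-linear $3$-club of rank $5$ in $\PG(1,32)$ is $\PGL(2,32)$-equivalent to the club $\C_0$ of Lemma \ref{lshmin3ruimte}; instead you propose to check only the clubs extracted from the arcs constructed in \cite{lim}. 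The ``hence it suffices'' in your first paragraph is a non sequitur: the theorem quantifies over all translation KM-arcs of type $8$ in $\PG(2,32)$, and nothing in the paper (or in your argument) asserts that Limbupasiriporn's list contains a representative of every equivalence class of such arcs. Your check would only prove the weaker statement that the arcs of \cite{lim} are equivalent to the example of Theorem \ref{qover4}. The paper's verification is a classification by computer of the $3$-clubs of rank $5$ in $\PG(1,32)$ themselves (this is the ``computer searches of $i$-clubs'' acknowledged at the end): one enumerates all candidate $(h-1)$-spaces $\mu$ in $H$, i.e. those meeting one spread element in a plane and every other spread element in at most a point, up to the stabiliser of the Desarguesian spread, and checks that the resulting $3$-clubs form a single $\PGL(2,32)$-orbit. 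Replacing your search over the clubs of the known arcs by this exhaustive search over clubs is exactly what is needed to close the gap.

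A smaller point: your elation argument identifying two arcs ``arising from the same $3$-club'' tacitly assumes that the $(h-1)$-space $\mu$ is determined by the club $\B(\mu)$ (your $\pi$ and $\pi'$ are assumed to meet $H$ in the \emph{same} $\mu$). For $(h-1)$-clubs this follows from Lemma \ref{vers}, but for $(h-2)$-clubs it is not proved in the paper; either justify it for the clubs at hand or run the equivalence check at the level of the subspaces $\mu$ (which the exhaustive enumeration above does anyway). This is a looseness the paper shares, so it is secondary to the main issue of which objects your computer search ranges over.
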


Apart from the examples of Vandendriessche, which will be studied in Section \ref{VDD2}, there is one example of a KM-arc in the literature that we did not cover yet, namely, the KM-arc of type $4$ in $\PG(2,32)$ of Key, MacDonough and Mavron \cite{kmm}, which we now denote by $\A_{kmm}$. Note that the parameters $i=2$ and $h=5$ of $\A_{kmm}$ do not fit in one of the infinite classes we have seen before. The arc $\A_{kmm}$ is not a translation arc. We checked by computer that the $4$ points on a $4$-secant to $\A_{kmm}$, together with the $4$-nucleus, do form an $\F_2$-linear set. 

%KLOPT VOLGENDE ZIN WEL?
%But in this example, the size of the set of directions determined by the points of $\A_{kmm}$ on two of its $4$-secants, determines of the choice of the $4$-secants, whereas in the example of Vandendriessche, this is a constant, as we will see in the next section.
%LAAT DIT MAAR WEG

\subsection{The arcs of Vandendriessche}\label{VDD2}

In \cite{vdd} the author described a new family of KM-arcs of type $q/4$ in $\PG(2,q)$. We start by presenting the construction of these examples.

\begin{construction}\label{constvdd}
  Let $\lambda$ be a primitive element of $\F_{q}$, $q=2^{h}$, with minimal polynomial $f$, such that $f(x)=x^{h}+c_{h-3}x^{h-3}+\dots+c_{1}x+c_{0}$, with $c_{i}\in\F_{2}\subset\F_{q}$ for all $i$. Note that $f(x)$ has no terms of degree $h-1$ and $h-2$. Each element $\mu\in\F_{q}$ can uniquely be written as $\mu_{0}+\mu_{1}\lambda+\dots+\mu_{h-1}\lambda^{h-1}$ with $\mu_{i}\in\F_{2}\subset\F_{q}$ for all $i=0,\dots,h-1$.
  \par Let $c\in\F_{2}\subset\F_{q}$ be a parameter. We define the following sets:
  \begin{align*}
    \S_{A}=\{(\mu,1,0)\mid \mu_{h-2}=0,\mu_{h-3}=1\}\;,\\
    \S_{B}=\{(\mu,0,1)\mid\mu_{h-1}=0,\mu_{h-2}=1\}\;,\\
    \S_{C,c}=\{(\mu,1,1)\mid\mu_{h-2}=0,\sum^{h-3}_{i=0}\mu_{i}=c\}\;,\\
    \S_{D,c}=\{(\mu,\lambda,1)\mid\mu_{h-1}+\mu_{h-2}=1,\sum^{h-3}_{i=0}\mu_{i}=c\}\;,\\
    \S_{E,c}=\{(\mu,\lambda^{2},1)\mid\mu_{h-1}=0,\sum^{h-2}_{i=0}\mu_{i}=c\}\;.
  \end{align*}
  Then $\A_{c}=\S_{A}\cup\S_{B}\cup\S_{C,c}\cup\S_{D,c}\cup\S_{E,c}$ is a KM-arc of type $q/4$.
  %Explanation example Peter with coordinates
\end{construction}

\begin{lemma}
  The KM-arcs $\A_{0}$ and $\A_{1}$ from Construction \ref{constvdd} are $\PGL$-equivalent.
\end{lemma}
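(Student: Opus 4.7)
The plan is to exhibit an explicit element of $\PGL(3,q)$ sending $\A_0$ to $\A_1$. First I observe that both arcs share the same $q/4$-nucleus $N=(1,0,0)$: the five $q/4$-secants carrying $\S_A,\S_B,\S_{C,c},\S_{D,c}$ and $\S_{E,c}$ are exactly the lines joining $N$ to the points $(0,1,0),(0,0,1),(0,1,1),(0,\lambda,1),(0,\lambda^2,1)$. The dependence on $c$ sits entirely in the three parity conditions $\sum_{i=0}^{h-3}\mu_i=c$ and $\sum_{i=0}^{h-2}\mu_i=c$ appearing in $\S_{C,c},\S_{D,c},\S_{E,c}$, so the task reduces to finding a collineation that simultaneously flips these three parities while leaving every other constraint intact.

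The natural candidate is the elation $\varphi:(x,y,z)\mapsto(x+z,y,z)$, which fixes $N$, has axis $z=0$, and therefore preserves each of the five $q/4$-secants setwise. On $\S_A$ one has $z=0$, so $\varphi$ acts as the identity and trivially fixes $\S_A$. On each of $\S_B,\S_{C,c},\S_{D,c},\S_{E,c}$ the third coordinate equals $1$, so $\varphi$ sends $(\mu,\cdot,\cdot)$ to $(\mu+1,\cdot,\cdot)$. Expanded in the basis $\{1,\lambda,\ldots,\lambda^{h-1}\}$, the element $1$ has only its $0$-th coordinate nonzero, so the translation $\mu\mapsto\mu+1$ toggles $\mu_0$ and leaves $\mu_1,\ldots,\mu_{h-1}$ fixed. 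Because $h\geq 3$, the index $0$ is disjoint from $\{h-3,h-2,h-1\}$, so the non-parity constraints $\mu_{h-2}=0$, $\mu_{h-2}=1$, $\mu_{h-1}=0$ and $\mu_{h-1}+\mu_{h-2}=1$ arising in $\S_B,\S_{C,c},\S_{D,c},\S_{E,c}$ are all preserved.

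It remains to check that the three parity conditions flip. Both sums $\sum_{i=0}^{h-3}\mu_i$ and $\sum_{i=0}^{h-2}\mu_i$ contain $\mu_0$, so replacing $\mu$ by $\mu+1$ changes each sum by exactly $1$; hence $\varphi(\S_{X,0})=\S_{X,1}$ for $X\in\{C,D,E\}$, and consequently $\varphi(\A_0)=\A_1$. The argument has no genuine obstacle: the only delicate point is the observation that adding $1$ in the chosen basis alters only the parity-controlling coordinate $\mu_0$, and the remaining verification is a routine case-by-case inspection of the five pieces of the construction.
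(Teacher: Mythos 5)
Your proposal is correct and is essentially the paper's proof: the paper simply exhibits the matrix $\left(\begin{smallmatrix}1&0&1\\0&1&0\\0&0&1\end{smallmatrix}\right)$, which is exactly your elation $(x,y,z)\mapsto(x+z,y,z)$, and your verification fills in the routine check. One tiny imprecision: for $h=3$ the index $0$ is \emph{not} disjoint from $\{h-3,h-2,h-1\}$ (since $h-3=0$), but this is harmless because the non-parity constraints you need preserved on $\S_B,\S_{C,c},\S_{D,c},\S_{E,c}$ involve only $\mu_{h-2}$ and $\mu_{h-1}$, and $0\notin\{h-2,h-1\}$ for $h\geq3$, while the condition on $\mu_{h-3}$ occurs only in $\S_A$, which is fixed pointwise.
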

\begin{proof}
  The collineation induced by the matrix $\left(\begin{smallmatrix}1&0&1\\0&1&0\\0&0&1\end{smallmatrix}\right)$ takes $\A_{0}$ to $\A_{1}$.
\end{proof}

The KM-arcs in $\PG(2,q)$ described in \cite{vdd} thus belong to one orbit of the collineation group $\PGL(3,q)$.

\begin{lemma}\label{vddtranslation}
  The example in Construction \ref{constvdd} is a translation KM-arc.
\end{lemma}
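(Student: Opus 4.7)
The plan is to identify a $t$-secant $\ell$ and exhibit a group of $q$ elations with axis $\ell$ that preserves $\A_c$ and acts transitively on $\A_c\setminus\ell$. We claim that $\ell : Y + \lambda Z = 0$ (the $t$-secant containing $\S_D$) works. Since any non-identity elation with axis $\ell$ fixes no point off $\ell$, it suffices to produce a subgroup of the elation group with axis $\ell$, of order $q$, that preserves $\A_c$; transitivity will then follow by regularity on the affine part.

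Parametrise the elations with axis $\ell$ by $(\gamma,\delta)\in\F_q^2$ via
\[
\phi_{\gamma,\delta}\colon (X,Y,Z)\longmapsto \bigl(X + \gamma\eta,\ Y + \lambda\delta\eta,\ Z + \delta\eta\bigr),\qquad \eta := Y + \lambda Z.
\]
Since $\phi_{\gamma,\delta}$ fixes the pencil through the $t$-nucleus $(1,0,0)$, it permutes the five $t$-secants. Computing where a point of $\S_A$ can be sent shows that the admissible $\delta$'s are those for which the image lies on a $t$-secant of $\A_c$, namely the ones in the subset
\[
\Delta := \bigl\{0,\ \lambda^{-1},\ (\lambda+1)^{-1},\ (\lambda(\lambda+1))^{-1}\bigr\}\subseteq \F_q,
\]
which is a Klein four-group under addition since $\lambda^{-1} + (\lambda+1)^{-1} = (\lambda(\lambda+1))^{-1}$. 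For each $\delta\in\Delta$ the induced permutation of $\{\S_A,\S_B,\S_C,\S_E\}$ is an involution realising one of the four elements of the Klein four-group (identity together with the three non-trivial pairings); $\S_D$ is pointwise fixed because it lies on the axis.

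For each fixed $\delta\in\Delta$ we then determine which $\gamma$'s yield an elation sending each $\S_X$ back into $\A_c$. Expanding $\mu\in\F_q$ in the basis $\{1,\lambda,\dots,\lambda^{h-1}\}$ and using the two identities $(\alpha\lambda)_{h-1}=\alpha_{h-2}$ and $(\alpha\lambda)_{h-2}=\alpha_{h-3}$ (which hold because $f$ has no $x^{h-1}$ or $x^{h-2}$ term), together with $c_0=1$ and $\sum_{i=0}^{h-3}c_i = 0$ (equivalent to $f(0)\ne 0$ and $f(1)\ne 0$), the defining relations of $\S_A,\S_B,\S_C,\S_E$ collapse into just two linear constraints on $\gamma_{h-3}$ and $\gamma_{h-2}$ alone. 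A short case analysis gives: for $\delta\in\{0,\lambda^{-1}\}$ the constraints are $\gamma_{h-2}=\gamma_{h-3}=0$, while for $\delta\in\{(\lambda+1)^{-1},(\lambda(\lambda+1))^{-1}\}$ they are $\gamma_{h-2}=c$ and $\gamma_{h-3}=1+c$. Crucially, the constraints arising from all four sets $\S_X$ turn out to be consistent (the pairing of $\S_A$ with its image and of $\S_B$ with its image yield the same pair of conditions), so in each case the admissible $\gamma$'s form an affine $\F_2$-subspace of $\F_q$ of dimension $h-2$; summing over $\delta\in\Delta$ yields $4\cdot 2^{h-2}=q$ admissible elations.

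Finally one checks that these $q$ pairs $(\gamma,\delta)$ are closed under the componentwise addition induced by composition in the elation group, which is immediate from the coset description above and the fact that $\Delta$ is a subgroup of $(\F_q,+)$. Hence we obtain a subgroup $G$ of order $q$ preserving $\A_c$; since $G$ acts freely on the affine plane $\PG(2,q)\setminus\ell$ and $|\A_c\setminus\ell|=q$, the orbit of any affine point of $\A_c$ under $G$ is all of $\A_c\setminus\ell$, establishing transitivity. The main obstacle is the coefficient bookkeeping in the third paragraph; it is the specific shape of the minimal polynomial $f$ (the missing $x^{h-1}$ and $x^{h-2}$ coefficients, chosen deliberately in Construction~\ref{constvdd}) that makes the conditions on $\gamma$ reduce cleanly to the two coordinates $\gamma_{h-3},\gamma_{h-2}$.
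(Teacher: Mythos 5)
Your proof is correct and is essentially the paper's own argument: both take the $q/4$-secant $Y=\lambda Z$ through $\S_{D,c}$ as the translation line and verify by coordinate computations, relying on the absence of the $x^{h-1}$ and $x^{h-2}$ terms in $f$ (plus $c_0=1$ and $f(1)\neq 0$), that explicit elations with this axis preserve the arc; your claimed constraints ($\gamma_{h-2}=\gamma_{h-3}=0$ for $\delta\in\{0,\lambda^{-1}\}$, and $\gamma_{h-2}=c$, $\gamma_{h-3}=1+c$ for the other two values of $\delta$) check out and agree with the explicit matrices the paper writes down for $c=0$. The only differences are in packaging: you count the stabilising elations ($4\cdot 2^{h-2}=q$) and deduce transitivity from the free action on the affine points, and you handle $\A_0$ and $\A_1$ simultaneously, whereas the paper first reduces to $\A_0$ via the equivalence lemma and exhibits, for each affine point $P$ of the arc, the elation mapping $(0,1,1)$ onto $P$.
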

\begin{proof}
  We consider the KM-arc $\A_{0}$ with the points given in Construction \ref{constvdd}. Let $\ell$ be the $\frac{q}{4}$-secant of $\A_{0}$ given by $\lambda Z=Y$. We prove that $\A_{0}$ is translation KM-arc with translation line $\ell$. It is sufficient to chech that for each point $P\in\A_{0}\setminus\ell$ the elation $\varphi_{P}$ mapping $(0,1,1)\in\S_{C,0}$ onto $P$, fixes $\A_{0}$.
  \par Assume $P$ is a point $(\mu,1,0)\in\S_{A}$, then $\varphi_{P}$ is given by
  \[
    \begin{pmatrix}
      \lambda+1&\mu(\lambda+1)&\mu(\lambda+1)\lambda\\
      0&1&\lambda^{2}\\
      0&1&1
    \end{pmatrix}\;.
  \]
  This elation with center $((\lambda+1)\mu,\lambda,1)$ interchanges the lines $Z=0$ and $Y=Z$, and the lines $Y=0$ and $\lambda^{2}Z=Y$. It can be calculated that each point of $\A_{0}\setminus\ell$ is mapped onto another point of $\A_{0}\setminus\ell$. E.g. the point $(x,1,0)$ with $x_{h-2}=0$ and $x_{h-3}=1$ is mapped onto the point $((\lambda+1)(x+\mu),1,1)$, and
  \begin{align*}
    ((\lambda+1)(x+\mu))_{h-2}&=x_{h-3}+x_{h-2}+\mu_{h-3}+\mu_{h-2}=1+0+1+0=0\\
    \sum^{h-3}_{i=0}((\lambda+1)(x+\mu))_{i}&=x_{h-3}+\mu_{h-3}=1+1=0\;,
  \end{align*}
  hence this is a point of $\S_{C,0}$.
  \par If $P$ is a point $(\mu,0,1)\in\S_{B}$, $(\mu,1,1)\in\S_{C,0}$, $(\mu,\lambda^{2},1)\in\S_{E,0}$, then $\varphi_{P}$ is given by
  \begin{align*}
    &\begin{pmatrix}
      \lambda(\lambda+1)&\mu(\lambda+1)&\mu(\lambda+1)\lambda\\
      0&\lambda^{2}&\lambda^{2}\\
      0&1&\lambda^{2}
    \end{pmatrix}\;,
    \quad
    \begin{pmatrix}
      \lambda+1&\mu&\mu\lambda\\
      0&\lambda+1&0\\
      0&0&\lambda+1
    \end{pmatrix}\;\text{and}\\
    &\begin{pmatrix}
      \lambda(\lambda+1)&\mu&\mu\lambda\\
      0&0&\lambda^{2}(\lambda+1)\\
      0&\lambda+1&0
    \end{pmatrix}\;,
  \end{align*}
  respectively. In the calculations to show that each of the points of $\A_{0}\setminus\ell$ is mapped onto a point of $\A_{0}\setminus\ell$, the following equalities are useful:
  \begin{align*}
    \S_{A}&=\{(x,1,0)\mid x_{h-2}=0,x_{h-3}=1\}=\{(y,\lambda+1,0)\mid y_{h-2}=1,\sum^{h-3}_{i=0}x_{i}=1\}\\
    &=\{(z,\lambda(\lambda+1),0)\mid z_{h-1}=1,\sum^{h-2}_{i=0}z_{i}=1\}\;,\\
    \S_{B}&=\{(x,0,1)\mid x_{h-1}=0,x_{h-2}=1\}=\{(y,0,\lambda+1)\mid y_{h-1}=1,\sum^{h-2}_{i=0}y_{i}=1\}\;,\\
    \S_{C,0}&=\{(x,1,1)\mid x_{h-2}=0,\sum^{h-3}_{i=0}x_{i}=0\}=\{(y,\lambda,\lambda)\mid y_{h-1}=0,\sum^{h-2}_{i=0}y_{i}=0\}\;.
  \end{align*}
To obtain these equalities, we used the information on the primitive polynomial of $\lambda$.
\end{proof}

\begin{remark}
  By Theorem \ref{translationinverse} we know that the points on the translation line $\ell$ not belonging to the KM-arc $\A_{0}$, including the nucleus, determine an $(h-2)$-club of rank $h$. So, the point set
  \[
    \ell\setminus\S_{D,0}\cong \{(1,0)\}\cup\{(\mu,1)\mid\mu_{h-1}+\mu_{h-2}=0\vee\sum^{h-3}_{i=0}\mu_{i}=1\}
  \]
  is an $(h-2)$-club of rank $h$.% It is not immediately clear that this is a linear set.
  %\par BETERE BESCRHIJVING VAN DEZE LINEAR SET?
\end{remark}

\begin{theorem}
  The $(h-2)$-club of rank $h$ on the translation line of the KM-arc $\A_{0}$ of type $\frac{q}{4}$ given in Construction \ref{constvdd}, consisting of the nucleus and the points not on the KM-arc, is $\PGL$-equivalent to the $(h-2)$-club $\C$ of rank $h$ given in Lemma \ref{lshmin3ruimte}.
\end{theorem}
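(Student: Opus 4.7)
My plan is to exhibit an explicit element of $\PGL(2,2^h)$ mapping the Vandendriessche $(h-2)$-club $V$ onto the club $\C$ of Lemma~\ref{lshmin3ruimte}. Since both clubs have head $(1,0)$, the sought element stabilises $(1,0)$ and is induced by a lower-triangular matrix over $\F_{2^h}$, equivalently by an $\F_{2^h}$-affine map on the affine line.

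The first step is to present both clubs in a common form. An $\F_2$-linear $(h-2)$-club of rank $h$ with head $(1,0)$ is determined by an $h$-dimensional $\F_2$-subspace $W\subseteq\F_{2^h}^2$ with $\dim_{\F_2}(W\cap(\F_{2^h}\times\{0\}))=h-2$. Projecting $W$ onto its second coordinate yields a $2$-dimensional $\F_2$-subspace $B\subseteq\F_{2^h}$ with kernel $U\times\{0\}$, where $U\subseteq\F_{2^h}$ is $(h-2)$-dimensional, and the non-head projective points of the club have affine coordinates forming the disjoint union $\bigsqcup_{b\in B\setminus 0}(U+a_b)/b$ for some coset representatives $a_b\in\F_{2^h}$. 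For $\C$ this presentation can be read off directly from Lemma~\ref{lshmin3ruimte}: a basis of $W_\C$ is $(\lambda,0),(\lambda^2,0),\dots,(\lambda^{h-2},0),(\lambda^{h-1},1),(0,\lambda)$, so $B_\C=\langle 1,\lambda\rangle_{\F_2}$, $U_\C=\langle\lambda,\dots,\lambda^{h-2}\rangle_{\F_2}$, with $a_1=a_{1+\lambda}=\lambda^{h-1}$ and $a_\lambda=0$.

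The second step is to extract the analogous triple $(B_V,U_V,(a_b)_b)$ for the Vandendriessche club from the description $A_V=\{\mu_{h-1}+\mu_{h-2}=0\}\cup\{\sum_{i=0}^{h-3}\mu_i=1\}$ given in the Remark. This is the main technical obstacle: the union of two codimension-$1$ $\F_2$-subspaces of $\F_{2^h}$ must be repackaged as the disjoint union of three affine $(h-2)$-dimensional translates $(U_V+a_b)/b$. Crucial here is Vandendriessche's choice in Construction~\ref{constvdd} of a primitive element $\lambda$ whose minimal polynomial has no $\lambda^{h-1}$ or $\lambda^{h-2}$ term; this is precisely what links the two defining conditions on $A_V$ to the $\F_2$-module structure generated by multiplication by $1$, $\lambda$ and $1+\lambda$, and permits a $B_V$ of the same shape as $B_\C$.

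Once both clubs are expressed in the form $\bigsqcup_{b\in B\setminus 0}(U+a_b)/b$, the final step is to solve for $\alpha,\beta,\delta\in\F_{2^h}$ with $\alpha\delta\neq 0$ such that the map $(x,y)\mapsto(\alpha x+\beta y,\delta y)$ sends $W_V$ onto $W_\C$. Tracking how this transformation acts on the common presentation, the problem reduces to the simultaneous conditions $\alpha U_V=U_\C$, $\delta B_V=B_\C$, and $\alpha a_b+\beta b=a^\C_{\delta b}$ for each $b\in B_V$, which is a finite $\F_2$-linear-algebra check once the second-step presentations of the two clubs have been brought into compatible shape.
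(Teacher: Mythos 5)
Your overall strategy (find explicit rank-$h$ vector-space presentations of both clubs and then an explicit lower-triangular element of $\PGL(2,2^h)$ matching them) is plausible, and your step 1 and the triple $(B_{\C},U_{\C},a_b)$ you read off from Lemma \ref{lshmin3ruimte} are correct. However, as it stands the proposal has a genuine gap: the step you yourself flag as ``the main technical obstacle'' is precisely the mathematical content of the theorem, and it is not carried out. The Vandendriessche club is only known to you as a point set, $\{(1,0)\}\cup\{(\mu,1)\mid \mu_{h-1}+\mu_{h-2}=0 \ \vee\ \sum_{i=0}^{h-3}\mu_i=1\}$ (note: the second condition defines an affine hyperplane, a coset, not a subspace). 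The evident decomposition of this set into three pieces of size $2^{h-2}$ consists of three cosets of the single subspace $\{\mu\mid \mu_{h-1}+\mu_{h-2}=0,\ \sum_{i=0}^{h-3}\mu_i=0\}$, whereas the presentation you need is of the form $\bigsqcup_{b\in B_V\setminus\{0\}}(U_V+a_b)/b$ with three \emph{different} scalars $b$ acting on a common $U_V$, compatible with $a_{b_1+b_2}\equiv a_{b_1}+a_{b_2}\pmod{U_V}$. Producing such $B_V,U_V,a_b$ explicitly (equivalently, an $h$-dimensional $\F_2$-subspace $W_V$ with $\B(W_V)$ equal to this point set) is essentially equivalent to proving the theorem; asserting that the absence of $x^{h-1},x^{h-2}$ terms in the minimal polynomial ``permits a $B_V$ of the same shape as $B_{\C}$'' is exactly what must be verified, and no computation is given. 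Step 3 is likewise only asserted; note moreover that its solvability is presentation-dependent: a projectivity mapping the two point sets onto each other need only carry your chosen $W_V$ onto \emph{some} subspace defining $\C$, not necessarily onto $W_{\C}$, so even granting the theorem your particular system $\alpha U_V=U_{\C}$, $\delta B_V=B_{\C}$, $\alpha a_b+\beta b\equiv a^{\C}_{\delta b}\pmod{U_{\C}}$ could fail for an unlucky choice of presentation.

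It is worth comparing with how the paper circumvents exactly this difficulty: instead of the clubs themselves it works with their complements on the line, each augmented by the head. For the Vandendriessche arc this complement is $\S_{D,0}\cup\{(1,0)\}$, whose rank-$(h-1)$ linear structure can be read off directly from Construction \ref{constvdd} (it is an intersection of two affine conditions, hence a single coset, easily parametrised), and for $\C$ the complement's linear structure is supplied by Lemma \ref{newfamilyremaining}. Then three explicit triangular matrices fixing $(1,0)$ map one complement onto the other, which suffices since such a map carries complement to complement and head to head. If you want to salvage your approach, the most economical route is to adopt this reduction: avoid constructing $W_V$ for the club itself and instead run your triple-matching argument on the two rank-$(h-1)$ sets, where the required presentations are already explicit in the paper's statements.
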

\begin{proof}
  Let $\lambda$ be a primitive element of $\F_{q}$, $q=2^{h}$, with minimal polynomial $f$, such that $f(x)=x^{h}+c_{h-3}x^{h-3}+\dots+c_{1}x+c_{0}$, with $c_{i}\in\F_{2}\subset\F_{q}$ for all $i$, as used in Construction \ref{constvdd}.
  \par The set of all points not in the $(h-2)$-club of rank $h$ corresponding to $\A_{0}$ is $\S_{D,0}$, and is equivalent to the set $\{(\mu,1)\mid\mu_{h-1}+\mu_{h-2}=1,\sum^{h-3}_{i=0}\mu_{i}=0\}$. By adding the point $(1,0)$ we find the linear set
  \begin{multline*}
    \left\{\left(\sum^{h-3}_{i=1}t_{i}+t_{1}\lambda+t_{2}\lambda^{2}+\dots+t_{h-3}\lambda^{h-3}+t_{h-2}\lambda^{h-2}+(t_{h-2}+t_{h})\lambda^{h-1},t_{h}\right)\mid\right.\\\left. t_i\in\F_2,(t_{1},t_{2},\dots,t_{h-2},t_{h})\neq(0,\dots,0)\right\}\;,
  \end{multline*}
  which is an $(h-2)$-club of rank $h-1$ with head $(1,0)$. By looking at the collineation induced by $\left(\begin{smallmatrix}1&\lambda^{h-1}\\0&1\end{smallmatrix}\right)$, we can see that this linear set is $\PGL$-equivalent to the linear set
  \begin{multline*}
    \L=\left\{\left(\sum^{h-3}_{i=1}t_{i}+t_{1}\lambda+t_{2}\lambda^{2}+\dots+t_{h-3}\lambda^{h-3}+t_{h-2}\lambda^{h-2}+t_{h-2}\lambda^{h-1},t_{h}\right)\mid\right.\\\left. t_i\in\F_2,(t_{1},t_{2},\dots,t_{h-2},t_{h})\neq(0,\dots,0)\right\}\;.
  \end{multline*}
  By Lemma \ref{newfamilyremaining} and regarding the assumptions on the primive polynomial $f$, the set $\C'=(\ell\setminus\C)\cup\{(1,0\}$ is given by
  \begin{multline*}
    \C'=\{(v_{0}+v_1\lambda+v_2\lambda^2+\ldots+v_{h-3}\lambda^{h-3}+v_{0}\lambda^{h-2}+v_{h-1}\lambda^{h-1}, v_{0})\mid\\ v_i\in\F_2,(v_{0},\dots,v_{h-1})\neq(0,\dots,0)\}\;,
  \end{multline*}
  which is also an $(h-2)$-club of rank $h-1$ with head $(1,0)$. This set is $\PGL$-equivalent to the linear set 
  \begin{multline*}
    \L'=\{(v_1\lambda+v_2\lambda^2+\dots+v_{h-3}\lambda^{h-3}+v_{h-1}\lambda^{h-1}, v_{0})\mid\\ v_i\in\F_2,(v_{0},\dots,v_{h-1})\neq(0,\dots,0)\}\;
  \end{multline*}
  through the collineation $\left(\begin{smallmatrix}1&1+\lambda^{h-2}\\0&1\end{smallmatrix}\right)$. It is sufficient to prove that $\L$ and $\L'$ are $\PGL$-equivalent to conclude that the $(h-2)$-clubs $\ell\setminus\S_{D,0}$ and $\C$ are $\PGL$-equivalent. We note that the collineation induced by $\left(\begin{smallmatrix}1+1/\lambda&0\\0&1\end{smallmatrix}\right)$, maps $\L'$ onto $\L$, which proves the theorem.
\end{proof}

\begin{remark}
  The translation KM-arcs that arise from the $(h-2)$-club of rank $h$ described in Section \ref{newfamily} are thus the KM-arcs that were previously described by Vandendriessche in \cite{vdd}. However, the description presented in Lemma \ref{lshmin3ruimte}  needs no restrictions on the primitive polynomial of the underlying field $\F_{q}$ of the projective plane $\PG(2,q)$.
\end{remark}

\section{A new class of KM-arcs of type \texorpdfstring{$\frac{q}{4}$}{q/4}}\label{genervdd}

We now introduce two properties of KM-arcs of type $\frac{q}{4}$, that are satisfied by the KM-arc of Construction \ref{constvdd}.

\comments{
\begin{definition}\label{starproperty}
  Let $\A$ be a KM-arc of type $\frac{q}{4}$ in $\PG(2,q)$, let $N$ be its nucleus and let $\ell_{0},\dots,\ell_{4}$ be its five $\frac{q}{4}$-secants. Denote $\ell_{i}\cap\A$ by $\S_{i}$, with $i=0,\dots,4$, and denote
  \[
    \D_{ij}=\{\left\langle P,Q\right\rangle\cap\ell_{0}\mid P\in\S_{i},Q\in\S_{j}\}\qquad 1\leq i<j\leq 4\;.
  \]
  The KM-arc $\A$ is said to have property (*) with respect to $\ell_{0}$ if the following requirements are fulfilled.
  \begin{enumerate}
    \item The following equalities are valid:
    \[
      \D_{12}=\D_{34}\qquad\D_{13}=\D_{24}\qquad\D_{14}=\D_{23}\;.
    \]
    %\begin{align*}
      %\{\left\langle P,Q\right\rangle\cap\ell_{0}\mid P\in\S_{1},Q\in\S_{2}\}&=\{\left\langle P,Q\right\rangle\cap\ell_{0}\mid P\in\S_{3},Q\in\S_{4}\}=:\D_{12|34}\;,\\
      %\{\left\langle P,Q\right\rangle\cap\ell_{0}\mid P\in\S_{1},Q\in\S_{3}\}&=\{\left\langle P,Q\right\rangle\cap\ell_{0}\mid P\in\S_{2},Q\in\S_{4}\}=:\D_{13|24}\text{ and }\\
      %\{\left\langle P,Q\right\rangle\cap\ell_{0}\mid P\in\S_{1},Q\in\S_{4}\}&=\{\left\langle P,Q\right\rangle\cap\ell_{0}\mid P\in\S_{2},Q\in\S_{3}\}=:\D_{14|23}\;.
    %\end{align*}
    \item $\D_{12}\cup\{N\}$, $\D_{13}\cup\{N\}$ and $\D_{14}\cup\{N\}$ are $(h-1)$-clubs of rank $h$ with head $N$.
    \item $\left|\D_{12}\cap\D_{13}\right|=\left|\D_{12}\cap\D_{14}\right|=\left|\D_{13}\cap\D_{14}\right|=\frac{q}{4}$.
    \item $\D_{12}\cap\D_{13}\cap\D_{14}=\emptyset$.
  \end{enumerate}
\end{definition}
}

\begin{definition}\label{starproperty}
  Let $\A$ be a KM-arc of type $\frac{q}{4}$ in $\PG(2,q)$, let $N$ be its nucleus and let $\ell_{0},\dots,\ell_{4}$ be its five $\frac{q}{4}$-secants. Denote $\ell_{i}\cap\A$ by $\S_{i}$, with $i=0,\dots,4$, and denote
  \[
    \D_{ij}=\{\left\langle P,Q\right\rangle\cap\ell_{0}\mid P\in\S_{i},Q\in\S_{j}\}\qquad 1\leq i<j\leq 4\;.
  \]
  The KM-arc $\A$ is said to have property (I) with respect to $\ell_{0}$ if the following requirements are fulfilled.
  \begin{enumerate}
    \item The sets $\S_{i}\cup\{N\}$ are $(h-2)$-clubs of rank $h-1$, for all $i=1,\dots,4$.
    \item The following equalities are valid:
    \[
      \D_{12}=\D_{34}\qquad\D_{13}=\D_{24}\qquad\D_{14}=\D_{23}\;.
    \]
    \item $\D_{12}\cup\{N\}$, $\D_{13}\cup\{N\}$ and $\D_{14}\cup\{N\}$ are $(h-2)$-clubs of rank $h-1$ with head $N$.
    %\item $\left|\D_{12}\cap\D_{13}\right|=\left|\D_{12}\cap\D_{14}\right|=\left|\D_{13}\cap\D_{14}\right|=\frac{q}{4}$.
    \item $\D_{12}\cap\D_{13}=\emptyset$, $\D_{12}\cap\D_{14}=\emptyset$ and $\D_{13}\cap\D_{14}=\emptyset$.
  \end{enumerate}
  The KM-arc $\A$ is said to have property (II) with respect to $\ell_{0}$ if the following requirements are fulfilled.
  \begin{enumerate}
    \item The sets $\S_{i}\cup\{N\}$ are $(h-2)$-clubs of rank $h-1$, for all $i=1,\dots,4$.
    \item The following equalities are valid:
    \[
      \D_{12}=\D_{34}\qquad\D_{13}=\D_{24}\qquad\D_{14}=\D_{23}\;.
    \]
    \item $\D_{12}\cup\{N\}$, $\D_{13}\cup\{N\}$ and $\D_{14}\cup\{N\}$ are $(h-1)$-clubs of rank $h$ with head $N$.
    \item $\left|\D_{12}\cap\D_{13}\right|=\left|\D_{12}\cap\D_{14}\right|=\left|\D_{13}\cap\D_{14}\right|=\frac{q}{4}$.
    \item $\D_{12}\cap\D_{13}\cap\D_{14}=\emptyset$.
  \end{enumerate}
\end{definition}

The first requirement in both properties in the previous definition asks that for any partition of the set $\{\S_{1},\S_{2},\S_{3},\S_{4}\}$ in two pairs, these two pairs of point sets determine the same points on $\ell_{0}$. From the other requirements it follows that $\ell_{0}\setminus\{N\}$ is partitioned into four point sets of size $\frac{q}{4}$, namely $\S_{0}$, $\D_{12}\cap\D_{13}$, $\D_{12}\cap\D_{14}$ and $\D_{13}\cap\D_{14}$ in case of property (I), and $\S_{0}$, $\D_{12}$, $\D_{13}$ and $\D_{14}$ in case of property (II). So, the properties (I) and (II) look like two flavours of the same property.

\begin{lemma}\label{translationprop1}
  A translation KM-arc of type $\frac{q}{4}$ in $\PG(2,q)$, has property (I) with respect to its translation line.
\end{lemma}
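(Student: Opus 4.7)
The plan is to work in the field-reduction picture of Theorem~\ref{translationinverse}, which writes any translation KM-arc of type $q/4$ with translation line $\ell_0$ as
\[
  \A = (\B(\pi)\setminus\C)\cup(\ell_0\setminus\C),
\]
where $\C=\B(\mu)$ is an $(h-2)$-club of rank $h$ in $\ell_0$ with head $N$ (the $q/4$-nucleus), $\mu\subset H$ is the $(h-1)$-space defining it (with $\B(H)=\ell_0$), $\rho\in\D$ is the spread element with $\B(\rho)=N$ and $\mu\cap\rho$ has projective dimension $h-3$, and $\pi$ is an $h$-space with $\pi\cap H=\mu$. Let $\ell_1,\dots,\ell_4$ be the remaining four $q/4$-secants (all through $N$), let $L_j$ be the $(2h-1)$-space with $\B(L_j)=\ell_j$, and let $G$ be the translation group of $\A$.

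For Requirement~1, I would identify $\S_j\cup\{N\}=\B(L_j\cap\pi)$, where $L_j\cap\pi$ is the projective $(h-2)$-space picked out in the proof of Theorem~\ref{translation} (applied with $i=h-2$ in the case $L\cap H=\rho$). A direct weight computation then gives $N$ the weight $\dim(\mu\cap\rho)+1=h-2$, while each spread element $\sigma\neq\rho$ contained in $L_j$ is disjoint from $H$ (and hence from $\mu$), so by Grassmann inside $\pi$ it meets $L_j\cap\pi$ in at most one point, giving weight~$1$.

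Requirements~2 and~4 follow from the abstract action of $G$ on the four secants $\ell_1,\dots,\ell_4$. The induced homomorphism $\varphi\colon G\to\mathrm{Sym}\{\S_1,\S_2,\S_3,\S_4\}$ has kernel $G_N$ (the elations in $G$ with centre $N$), of order $q/4$; the quotient $G/G_N$ has order $4$, acts faithfully, and acts semiregularly on the four $\S_j$'s because any non-identity element fixing some $\S_j$ would have centre on $\ell_j\cap\ell_0=\{N\}$. Hence the image is regular and consists of involutions, forcing it to be the Klein four group $V_4=\{e,(12)(34),(13)(24),(14)(23)\}$. The elation $\gamma\in G$ with $\gamma(P)=Q$, $P\in\S_i$, $Q\in\S_j$, has centre $\langle P,Q\rangle\cap\ell_0$ and $\varphi$-image $(ij)(kl)$; so $\D_{ij}$ is precisely the centre-set of the non-identity elements of the coset of $G_N$ with $\varphi$-image $(ij)(kl)$. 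The three equalities of~(2) and the pairwise disjointness of~(4) follow at once from the coset decomposition.

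The main obstacle is Requirement~3, which requires upgrading the coset picture to a linear-set statement. The key ingredient is the $\F_2$-linear identification of $G$ with $\mu$: the elation group with axis $H$ in the ambient $\PG(2h,2)=\PG(\langle H,\pi\rangle)$ is canonically an $\F_2$-vector space isomorphic to the $\F_2$-vector space underlying $H$, and under this identification $G$ corresponds to the $\F_2$-subspace $\mu$, $G_N$ corresponds to $\mu\cap\rho$, and the centre map $g\mapsto\tilde c(g)$ becomes an $\F_2$-linear isomorphism $G\to\mu$. Fix $g_{12}\in G\setminus G_N$ with $\varphi(g_{12})=(12)(34)$; then the coset $g_{12}G_N$ maps to the affine subspace $\tilde c(g_{12})+(\mu\cap\rho)$, and since $\tilde c(g_{12})\notin\rho$ the set
\[
  \tilde\mu_{12}\;:=\;\F_2\,\tilde c(g_{12})+(\mu\cap\rho)
\]
is an $\F_2$-subspace of $\mu$ of dimension $h-1$ (projective dimension $h-2$). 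A weight count using $\tilde\mu_{12}\cap\rho=\mu\cap\rho$ then gives $\B(\tilde\mu_{12})=\D_{12}\cup\{N\}$, with $N$ of weight $h-2$ and the remaining $q/4$ points of weight~$1$, so $\D_{12}\cup\{N\}$ is an $(h-2)$-club of rank $h-1$ with head $N$. The analogous subspaces $\tilde\mu_{13}$ and $\tilde\mu_{14}$ handle the other two $\D_{ij}$'s, completing Requirement~3.
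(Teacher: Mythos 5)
Your Requirements 1 and 3 are essentially correct, and your route is in substance a group-theoretic reformulation of the paper's own argument: the paper works directly with the four $(h-2)$-spaces $\sigma_i=L_i\cap\pi$ through $\sigma=\mu\cap\rho$ and reads off everything in the Fano-plane quotient of $\sigma$ in $\pi$, identifying $\D_{ij}\cup\{N\}$ with $\B$ of the three hyperplanes of $\mu$ through $\sigma$; your subspaces $\tilde\mu_{12},\tilde\mu_{13},\tilde\mu_{14}$ are exactly those hyperplanes, reached via the translation group and the linearised centre map instead of via the quotient plane. One small unproved assertion along the way: you state $|G_N|=q/4$ with no argument. It is true, but needs a line -- e.g.\ $G_N$ acts freely on the $q/4$ points of $\S_1$, so $|G_N|$ divides $q/4$, while $G/G_N$ acts semiregularly on the four secants, so $|G/G_N|$ divides $4$; since $|G|=q$ both are forced -- or it falls out of your own later identification of $G_N$ with the $(h-2)$-dimensional vector space underlying $\mu\cap\rho$.

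The genuine gap is Requirement 4. Pairwise disjointness of $\D_{12},\D_{13},\D_{14}$ does \emph{not} ``follow at once from the coset decomposition'': disjoint cosets give disjoint sets of group elements, but nothing in the group structure prevents an elation in the coset $C_{12}$ and one in $C_{13}$ from sharing a centre $X\neq N$. Indeed, if $\gamma_{12}\in C_{12}$ and $\gamma_{13}\in C_{13}$ had the same centre $X$, their product would be a non-identity element with centre $X$ lying in $C_{14}$, so $X\in\D_{12}\cap\D_{13}\cap\D_{14}$ -- perfectly consistent with the coset picture, which by itself only shows that a point of $\ell_0\setminus\{N\}$ lies in one of the $\D_{ij}$ or in all three. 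Ruling this out needs an extra input that you never invoke: for instance, since $\tilde c$ maps $C_{12},C_{13},C_{14}$ onto the three disjoint nontrivial cosets of the vector space underlying $\mu\cap\rho$ in $\mu$, and every spread element other than $\rho$ meets $\mu$ in at most one point (because $\B(\mu)$ is an $(h-2)$-club with head $N$), the planar centre map $g\mapsto\B(\tilde c(g))$ is injective on $G\setminus G_N$, and disjointness follows. Alternatively, a shared centre $X$ would put the four distinct affine points $P,P^{\gamma_{12}},P^{\gamma_{13}},P^{\gamma_{14}}$ of $\A$ on one line through $X$ missing $N$, contradicting the $(0,2,q/4)$ intersection pattern. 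Either fix is one line and uses only machinery you already set up in Requirement 3, but as written Requirement 4 is not proved.
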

\begin{proof}
  Let $\A$ be a translation KM-arc of type $\frac{q}{4}$ in $\PG(2,q)$, with $\ell$ its translation line. Denote its nucleus by $N$. Let $\D$ be the Desarguesian $(h-1)$-spread in $\PG(3h-1,2)$ corresponding to $\PG(2,2^h)$, let $H$ be the $(2h-1)$-space such that $\ell=\B(H)$, and let $\rho\in\D$ be the spread element such that $N=\B(\rho)$. By Theorem \ref{translationinverse} we know that there is an $(h-1)$-space $\mu\subset H$ and an $h$-space $\pi$ meeting $H$ exactly in $\mu$ such that $\B(\mu)=\ell\setminus\A$ and $\B(\pi)=\A\setminus\ell$. Moreover $\mu\cap\rho=\sigma$ is an $(h-3)$-space, and $\mu$ meets all other spread elements in either a point or an empty space.
  \par There are four $(h-2)$-spaces through $\sigma$ that are not contained in $H$. We denote them by $\sigma_{i}$, $i=1,\dots,4$. The point set $\B(\sigma_{i})=\A_{i}\cup\{N\}$ is clearly an $(h-2)$-club of rank $h-1$, $i=1,\dots,4$.%contains $\frac{q}{4}$ points of $\A$ and is contained in the $\frac{q}{4}$-secant $\ell_{i}=\B(\left\langle\mu,\sigma_{i}\right\rangle)$, $i=1,\dots,4$.
  \par Using the notation from Definition \ref{starproperty} we see that $\D_{i,j}$ equals $\B(\left\langle\sigma_{i},\sigma_{j}\right\rangle\cap H)$, $1\leq i<j\leq4$.
  \par Consider the quotient geometry of $\sigma$ in $\pi$; this is a Fano plane $\mathcal{F}$ in which $H\cap \pi$ is a line $\ell_{H}$. The $(h-2)$-spaces $\sigma_{1},\dots,\sigma_{4}$ correspond to the points of $\mathcal{F}$ not on $\ell_{H}$. It is clear that $\D_{ij}$ corresponds to the intersection of a line in $\mathcal{F}$ with $\ell_H$. This implies that $\D_{12}=\D_{34}$, $\D_{13}=\D_{24}$ and $\D_{14}=\D_{23}$. Moreover, $\D_{12}$, $\D_{13}$ and $\D_{14}$ correspond to the three $(h-2)$-spaces through $\sigma$ in $\mu$. The two other requirements for property (I) follow immediately.
%
%  
%  
%    We look at the $(h-1)$-spaces $\left\langle\sigma_{i},\sigma_{j}\right\rangle$ through the quotient geometry of $\sigma$ in $\pi$; this is a Fano plane $\mathcal{F}$ in which $H$ is a line $\ell_{H}$. The $(h-2)$-spaces $\sigma_{1},\dots,\sigma_{4}$ are the points of $\mathcal{F}$ not on $\ell_{H}$. Using this quotient geometry we see immediately that $\D_{12}=\D_{34}$, $\D_{13}=\D_{24}$ and $\D_{14}=\D_{23}$. Moreover, $\D_{12}$, $\D_{13}$ and $\D_{14}$ are the three $(h-2)$-spaces through $\sigma$ in $\mu$. The two other requirements for property (I) follow immediately.
\end{proof}

By Lemma \ref{vddtranslation} we know that the KM-arc of type $\frac{q}{4}$ given in Construction \ref{constvdd} is a translation KM-arc, so it has property (I) with respect to its translation line. We show that is has property (II) with respect to another $\frac{q}{4}$-secant.

\begin{lemma}
  Let $\A_{0}\subset\PG(2,q)$ be the KM-arc of type $\frac{q}{4}$ described in Construction \ref{constvdd}, let $N$ be its nucleus, and let $\ell$ be the line $Z=0$, which is a $\frac{q}{4}$-secant of $\A$. Then $\A$ has property (II) with respect to $\ell$. % so $\S_{A}\subset\ell$. Then,
  %\begin{align*}
    %\{\left\langle P,Q\right\rangle\cap\ell\mid P\in\S_{B},Q\in\S_{C,0}\}&=\{\left\langle P,Q\right\rangle\cap\ell\mid P\in\S_{D,0},Q\in\S_{E,0}\}=:\D_{BC|DE}\;,\\
    %\{\left\langle P,Q\right\rangle\cap\ell\mid P\in\S_{B},Q\in\S_{D,0}\}&=\{\left\langle P,Q\right\rangle\cap\ell\mid P\in\S_{C,0},Q\in\S_{E,0}\}=:\D_{BD|CE}\;,\\
    %\{\left\langle P,Q\right\rangle\cap\ell\mid P\in\S_{B},Q\in\S_{E,0}\}&=\{\left\langle P,Q\right\rangle\cap\ell\mid P\in\S_{C,0},Q\in\S_{D,0}\}=:\D_{BE|CD}\;,
  %\end{align*}
  %and each of these three sets contains $\frac{q}{2}$ points. The union of any of these three sets and $N$ is a linear set. Moreover,
  %\[
    %\left|\D_{BC|DE}\cap\D_{BD|CE}\right|=\left|\D_{BC|DE}\cap\D_{BE|CD}\right|=\left|\D_{BD|CE}\cap\D_{BE|CD}\right|=\frac{q}{4}\;,
  %\]
  %and $\D_{BC|DE}\cap\D_{BD|CE}\cap\D_{BE|CD}=\emptyset$
  %Description of the behaviour of the points determined by the affine points on two lines.
\end{lemma}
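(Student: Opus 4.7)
The plan is to verify each of the five requirements of property (II) for $\A_0$ with respect to $\ell_0:Z=0$. The five $\frac{q}{4}$-secants are $\ell_0$, $\ell_1:Y=0$, $\ell_2:Y=Z$, $\ell_3:Y=\lambda Z$ and $\ell_4:Y=\lambda^2 Z$, all concurrent at the nucleus $N=(1,0,0)$; their intersections with $\A_0$ are $\S_0=\S_A$, $\S_1=\S_B$, $\S_2=\S_{C,0}$, $\S_3=\S_{D,0}$, $\S_4=\S_{E,0}$, and $\ell_3$ is the translation line of $\A_0$ by Lemma~\ref{vddtranslation}. I identify $\ell_0\setminus\{N\}$ with $\F_q$ via $\delta\leftrightarrow(\delta,1,0)$.

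Requirement~1 splits into two cases. For $\S_B\cup\{N\}$, $\S_{C,0}\cup\{N\}$ and $\S_{E,0}\cup\{N\}$---the intersections of $\A_0$ with the non-translation, non-$\ell_0$ secants---Lemma~\ref{translationprop1} applied at the translation line $\ell_3$ directly gives that each is an $(h-2)$-club of rank $h-1$. For the translation-line intersection $\S_{D,0}\cup\{N\}$, the preceding theorem (on the $\PGL$-equivalence between the $(h-2)$-club $\ell_3\setminus\S_{D,0}$ and the club $\C$ of Lemma~\ref{lshmin3ruimte}), combined with Lemma~\ref{newfamilyremaining} and the explicit form of $\C'$ appearing in its proof, shows that $\S_{D,0}\cup\{N\}$ is also an $\F_2$-linear $(h-2)$-club of rank $h-1$.

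For requirements~2--5, I compute every $\D_{ij}$ ($1\leq i<j\leq 4$) directly. Given $P=(\alpha,y_i,1)\in\S_i$ and $Q=(\beta,y_j,1)\in\S_j$, the line $\langle P,Q\rangle$ meets $\ell_0$ at $\delta=(\alpha+\beta)/(y_i+y_j)$. Since the constraints on each $\S_i$ are $\F_2$-linear (affine) conditions on the coordinates $(\alpha_0,\dots,\alpha_{h-1})$ in the basis $\{1,\lambda,\dots,\lambda^{h-1}\}$, the image $\{\alpha+\beta\}$ is an affine $\F_2$-subspace of $\F_q$, and a dimension count shows it is an affine hyperplane. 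Multiplication by $(y_i+y_j)^{-1}$ is an $\F_2$-linear automorphism of $\F_q$, and tracking how it transforms the characterising linear functional I obtain
\[
\D_{12}=\D_{34}=\{\delta:\delta_{h-2}=1\},\quad\D_{13}=\D_{24}=\{\delta:\delta_{h-2}=\delta_{h-3}\},\quad\D_{14}=\D_{23}=\{\delta:\delta_{h-3}=0\}.
\]
Requirement~2 is then immediate. For requirement~3, each $\D_{ij}\cup\{N\}$ is the image under field reduction of an explicit $\F_2$-subspace $V_{ij}\subseteq\F_q^2$ of dimension $h$ in which the first-coordinate subspace $V_{ij}\cap(\F_q\times\{0\})$ has dimension $h-1$, making $N=(1,0)$ a head of weight $h-1$ and exhibiting $\D_{ij}\cup\{N\}$ as an $(h-1)$-club of rank $h$. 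Requirements~4 and~5 reduce to elementary intersections of the three hyperplane equations: each pairwise intersection has $2^{h-2}=q/4$ solutions, while the triple intersection simultaneously forces $\delta_{h-3}=0$ and $\delta_{h-3}=1$ and is therefore empty.

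The main obstacle is computing those $\D_{ij}$ whose denominator $y_i+y_j$ lies outside $\F_2$, namely $\D_{13},\D_{14},\D_{23},\D_{24},\D_{34}$. The computation is streamlined by the identity $\sum_{j=0}^{h-3}c_j=0$, which follows from $c_0=1$ (since $\lambda$ is primitive), the fact that $f$ has no $x^{h-1}$ or $x^{h-2}$ term, and $f(1)\neq0$. This identity ensures that the linear functional $T(x)=\sum_i x_i$ and its partial sums transform cleanly under multiplication by $\lambda$ or $\lambda^2$, yielding the uniform descriptions of the $\D_{ij}$ above.
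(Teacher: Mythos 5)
Your proposal is correct and follows essentially the same route as the paper: you compute the sets $\D_{ij}$ explicitly and arrive at exactly the same three conditions $\delta_{h-2}=1$, $\delta_{h-2}+\delta_{h-3}=0$ and $\delta_{h-3}=0$ (so requirements 2--5 follow exactly as in the paper's proof), and your identity $\sum_{j=0}^{h-3}c_j=0$ is precisely the ``information on the primitive polynomial'' the paper invokes in its calculations. The only difference is requirement 1, where the paper reads the $(h-2)$-club property of each $\S_i\cup\{N\}$ directly off the defining $\F_2$-linear conditions, whereas you route it through Lemma \ref{translationprop1} and, for $\S_{D,0}\cup\{N\}$, through the equivalence theorem and Lemma \ref{newfamilyremaining} -- valid (provided one tracks the head under the equivalence), but more indirect than necessary.
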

\begin{proof}
  Note that the line $\ell$ contains the point set $\S_{A}$. First, it is a direct observation that $\S_{B}\cup\{N\}$, $\S_{C,0}\cup\{N\}$, $\S_{D,0}\cup\{N\}$ and $\S_{E,0}\cup\{N\}$ are $(h-2)$-clubs of rank $h-1$, given the form in which they are presented. Now, it is a straighforward calculation to see that
  \begin{align*}
    \{\left\langle P,Q\right\rangle\cap\ell\mid P\in\S_{B},Q\in\S_{C,0}\}&=\{(x,1,0)\mid x_{h-2}=1\}\text{ and }\\
    \{\left\langle P,Q\right\rangle\cap\ell\mid P\in\S_{D,0},Q\in\S_{E,0}\}&=\{(y,\lambda^{2}+\lambda,0)\mid \sum_{i=0}^{h-1} y_{i}=1\}\;.
  \end{align*}
  Another calculation learns that these two point sets are equal. Hereby, we used the information on the primitive polynomial of $\lambda$ given in Construction \ref{constvdd}. We denote this point set by $\D_{BC|DE}$. Analogously we find that
  \begin{align*}
    \D_{BD|CE}&=\{(x,1,0)\mid x_{h-3}+x_{h-2}=0\}=\{(y,\lambda,0)\mid y_{h-2}+y_{h-1}=0\}\\
    &=\{(z,\lambda^{2}+1,0)\mid \sum^{h-2}_{i=0}z_{i}=0\}\;,\\
    \D_{BE|CD}&=\{(x,1,0)\mid x_{h-3}=0\}=\{(y,\lambda^{2},0)\mid y_{h-1}=0\}=\{(z,\lambda+1,0)\mid \sum^{h-3}_{i=0}z_{i}=0\}\;.
  \end{align*}
  It is immediate that $\D_{BC|DE}\cup\{N\}$, $\D_{BD|CE}\cup\{N\}$ and $\D_{BE|CD}\cup\{N\}$ are $(h-1)$-clubs of rank $h$.
  \par Now, we look at the equations $x_{h-2}=1$, $x_{h-3}+x_{h-2}=0$ and $x_{h-3}=0$. Any two of them are independent, but no element of $\F_{q}$ can satisfy all three. So, the intersections $\D_{BC|DE}\cap\D_{BD|CE}$, $\D_{BC|DE}\cap\D_{BE|CD}$ and $\D_{BD|CE}\cap\D_{BE|CD}$ have size $\frac{q}{4}$ and the intersection $\D_{BC|DE}\cap\D_{BD|CE}\cap\D_{BE|CD}$ is empty. It follows that $\A_{0}$ has property (II) with respect to $\ell$.
\end{proof}

\begin{remark}
  The line $\ell$ is not necessarily the unique $\frac{q}{4}$-secant such that $\A_{0}$ has property (II) with respect to it. E.g. for $q=32$, the KM-arc $\A_{0}$ has property (II) with respect to each of the four $\frac{q}{4}$-secants, different from the translation line.
  %\par HOE ZIT HET BIJ KM en GW?
\end{remark}

We now present an easy to prove lemma on systems of equations with trace functions. We will need this lemma throughout the next theorems.
\comments{
\begin{lemma}\label{tracesysteem}
  Let $\T$ be the absolute trace function $\F_{q}\to\F_{2}$, $q$ even. Let $k_{1},k_{2},k_{3}$ be different elements of $\F_{q}$ and let $c_{1},c_{2},c_{3}$ be elements in $\F_{2}$.
  \begin{itemize}
    \item The system
    \[
      \begin{cases}
        \T(k_{1}x)=c_{1}\\
        \T(k_{2}x)=c_{2}
      \end{cases}
    \]
    has $\frac{q}{4}$ solutions.
    \item If $k_{1}+k_{2}+k_{3}\neq0$, then the system
    \[
      \begin{cases}
        \T(k_{1}x)=c_{1}\\
        \T(k_{2}x)=c_{2}\\
        \T(k_{3}x)=c_{3}
      \end{cases}
    \]
    has $\frac{q}{8}$ solutions.
  \end{itemize}
\end{lemma}

OF:
}
\begin{lemma}\label{tracesysteemalg}
  %Let $\T$ be the trace function $\F_{q}\to\F_{2}$, $q$ even. Let $k_{1},\dots,k_{m}$ be $m$ elements of $\F_{q}$ such that $\{k_{1},\dots,k_{m'}\}$ is a basis of the $\F_{2}$-subvector space of $\F_{q}$, generated by $k_{1},\dots,k_{m}$, and let $a_{i,j}\in\F_{2}$ be such that $k_{i}=\sum^{m'}_{j=1}a_{i,j}k_{j}$ for all $i=m'+1,\dots,m$ and $j=1,\dots,m'$. Let $c_{1},\dots,c_{m}$ be elements in $\F_{2}$. We consider the system of equations
  Let $\T$ be the absolute trace function $\F_{q}\to\F_{2}$, $q$ even. Let $k_{1},\dots,k_{m}$ be $m$ elements of $\F_{q}$ and let $c_{1},\dots,c_{m}$ be elements in $\F_{2}$. We consider the system of equations
  \[
    \begin{cases}
      \T(k_{1}x)=c_{1}\\
      \vdots\\
      \T(k_{m}x)=c_{m}
    \end{cases}\;.
  \]
  Up to the ordering of the equations, we can assume that $\{k_{1},\dots,k_{m'}\}$ is a basis of the $\F_{2}$-subvector space of $\F_{q}$, generated by $k_{1},\dots,k_{m}$. Let $a_{i,j}\in\F_{2}$ be such that $k_{i}=\sum^{m'}_{j=1}a_{i,j}k_{j}$ for all $i=m'+1,\dots,m$ and $j=1,\dots,m'$.
  \par If $c_{i}=\sum^{m'}_{j=1}a_{i,j}c_{j}$ for all $i=m'+1,\dots,m$, then this system has $\frac{q}{2^{m'}}$ solutions, otherwise is has no solutions.
  \par In particular, the system has $\frac{q}{2^{m}}$ solutions if $\{k_{1},\dots,k_{m}\}$ is an $\F_{2}$-linear independent set.
\end{lemma}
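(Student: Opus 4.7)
The plan is to recognise this as a purely linear algebraic statement once we view $\F_q$ as an $h$-dimensional $\F_2$-vector space and exploit the non-degeneracy of the trace form. Consider the $\F_2$-linear map
\[
\Phi:\F_q\to\F_2^m:\ x\mapsto\bigl(\T(k_1 x),\dots,\T(k_m x)\bigr).
\]
The system has a solution iff $(c_1,\dots,c_m)\in\mathrm{Im}(\Phi)$, and when it does, the solution set is a coset of $\ker\Phi$. So the lemma reduces to identifying $\mathrm{Im}(\Phi)$ and computing $|\ker\Phi|$.

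First I would determine $\mathrm{Im}(\Phi)$. Since $k_i=\sum_{j=1}^{m'}a_{i,j}k_j$ for $i>m'$ and $\T$ is $\F_2$-linear, we have $\T(k_i x)=\sum_{j=1}^{m'}a_{i,j}\T(k_j x)$ for every $x\in\F_q$. Hence $\mathrm{Im}(\Phi)$ is contained in the $\F_2$-subspace
\[
U=\Bigl\{(y_1,\dots,y_m)\in\F_2^m\ \Big|\ y_i=\sum_{j=1}^{m'}a_{i,j}y_j\text{ for all }i>m'\Bigr\},
\]
which has $\F_2$-dimension exactly $m'$. Thus the compatibility condition $c_i=\sum_j a_{i,j}c_j$ for $i>m'$ is clearly necessary.

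Next I would compute $\ker\Phi$. The same linear relations show that $\ker\Phi$ coincides with $\{x\in\F_q:\T(k_j x)=0,\ j=1,\dots,m'\}$, i.e.\ the orthogonal complement (with respect to the trace form $(a,b)\mapsto\T(ab)$) of the $\F_2$-span $V=\langle k_1,\dots,k_{m'}\rangle_{\F_2}$. Since the trace form is a non-degenerate symmetric $\F_2$-bilinear form on $\F_q$, the orthogonal complement of an $m'$-dimensional subspace has $\F_2$-dimension $h-m'$, so $|\ker\Phi|=q/2^{m'}$.

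Finally, rank-nullity gives $|\mathrm{Im}(\Phi)|=q/|\ker\Phi|=2^{m'}=|U|$, so $\mathrm{Im}(\Phi)=U$. This proves sufficiency of the compatibility condition and, when it holds, the solution set is a coset of $\ker\Phi$ of size $q/2^{m'}$. The particular case in which $k_1,\dots,k_m$ are $\F_2$-linearly independent is $m'=m$, in which $U=\F_2^m$ and the system has $q/2^m$ solutions for every choice of $(c_1,\dots,c_m)$. The only subtlety worth stating carefully is the non-degeneracy of the trace form (equivalently, that $\T(kx)=0$ for all $x$ forces $k=0$), which is the classical fact $\T\not\equiv 0$ together with $\F_2$-linearity; everything else is bookkeeping.
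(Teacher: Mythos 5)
Your proof is correct and complete. The paper states this lemma without proof (calling it ``easy to prove''), and your argument---viewing $x\mapsto(\T(k_1x),\dots,\T(k_mx))$ as an $\F_2$-linear map, identifying its kernel as the orthogonal complement of $\langle k_1,\dots,k_{m'}\rangle_{\F_2}$ via the non-degeneracy of the trace form $(a,b)\mapsto\T(ab)$, and using rank--nullity to see the image is exactly the compatibility subspace---is precisely the standard argument the authors implicitly rely on, with the one genuinely needed ingredient (non-degeneracy) correctly singled out.
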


We now describe a new family of KM-arcs of type $\frac{q}{4}$ that have property (I) or (II) with respect to one of its $\frac{q}{4}$-secants.

\begin{theorem}\label{ourconstruction}
  Let $\T$ be the absolute trace function from $\F_{q}$ to $\F_{2}$, $q=2^{h}$. Choose $\alpha,\beta\in\F_{q}\setminus\{0,1\}$ such that $\alpha\beta\neq1$ and define
  \[
    \gamma=\frac{\beta+1}{\alpha\beta+1}\;,\quad\xi=\alpha\beta\gamma\;.
  \]
  Now choose $a,b\in\F_{2}\subset\F_{q}$, and define the following sets
  \begin{align*}
    \S_{0}&:=\left\{(z,1,0)\mid z\in \F_q,\T(z)=0,\T\left(\frac{z}{\alpha}\right)=a\right\}\;,\\
    \S_{1}&:=\left\{(z,0,1)\mid z\in \F_q,\T(z)=0,\T\left(\frac{z}{\alpha\gamma}\right)=0\right\}\;,\\
    \S_{2}&:=\left\{(z,1,1)\mid z\in \F_q,\T(z)=1,\T\left(\frac{z}{\alpha\beta}\right)=b\right\}\;,\\
    \S_{3}&:=\left\{(z,\gamma,1)\mid z\in \F_q,\T\left(\frac{z}{\alpha\gamma}\right)=a+1,\T\left(\frac{z}{\xi}\right)=b+1\right\}\;,\\
    \S_{4}&:=\left\{(z,\beta+1,1)\mid z\in \F_q,\T\left(\frac{z}{\alpha\beta}\right)=a+b+1,\T\left(\frac{z}{\xi}\right)=b\right\}\;.
  \end{align*}
  Then, $\A=\cup^{4}_{i=0}\S_{i}$ is a KM-arc of type $\frac{q}{4}$ in $\PG(2,q)$, and it has property (I) or (II) with respect to $Z=0$.
\end{theorem}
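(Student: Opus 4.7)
The proof splits into three parts. First, each $\S_{i}$ is cut from its affine line by two $\F_{2}$-linear trace conditions of the form $\T(kz)=c$, so by Lemma~\ref{tracesysteemalg} we have $|\S_{i}|=q/4$ as soon as the two coefficient elements of $\F_{q}$ are $\F_{2}$-linearly independent. The hypotheses $\alpha,\beta\in\F_{q}\setminus\{0,1\}$ and $\alpha\beta\neq1$ give all five such independencies quickly (e.g.\ $\alpha\gamma=1$ forces $\alpha=1$, and $\gamma\in\{0,1\}$ forces $\beta\in\{0,1\}$ or $\alpha=1$), so $|\A|=5\cdot q/4=q+q/4$.

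Every $\S_{i}$ lies on a line through $N=(1,0,0)$, and any line through $N$ other than $\ell_{0},\dots,\ell_{4}$ meets each $\ell_{i}$ only in $N\notin\A$; hence such lines miss $\A$. For a line $m$ not through $N$, write $m:X+uY+vZ=0$ with $(u,v)\in\F_{q}^{2}$; then $P_{i}:=m\cap\ell_{i}$ has first coordinate $u$, $v$, $u+v$, $u\gamma+v$, $u(\beta+1)+v$ for $i=0,\dots,4$, so the conditions $P_{i}\in\S_{i}$ become ten affine $\F_{2}$-equations in $u$ and $v$ whose trace functionals have coefficients drawn from $\{1,\,1/\alpha,\,1/(\alpha\beta),\,1/(\alpha\gamma),\,1/\xi\}$. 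The crucial algebraic identity, obtained by clearing $\xi$ and using $\gamma(\alpha\beta+1)=\beta+1$ to get $\xi=\beta+\gamma+1$, is
\[
1+\frac{1}{\alpha\beta}+\frac{1}{\alpha\gamma}+\frac{1}{\xi}=0,
\]
which yields $\T(z)+\T(z/(\alpha\beta))+\T(z/(\alpha\gamma))+\T(z/\xi)=0$ for every $z\in\F_{q}$. Using this relation I would check, for each of the $\binom{5}{3}=10$ index-triples, that a short $\F_{2}$-combination of the six equations involved has vanishing left-hand side but right-hand side $1$; hence $N(u,v):=\#\{i:P_{i}\in\S_{i}\}\leq 2$. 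A moment computation then pins $N$ to $\{0,2\}$: each point of $\S_{i}$ lies on exactly $q$ lines not through $N$, so $\sum N=5q^{2}/4$; each pair $(P,Q)\in\S_{i}\times\S_{j}$ with $i\neq j$ determines a unique such line, giving $\sum N^{2}=5q^{2}/4+2\binom{5}{2}(q/4)^{2}=5q^{2}/2$; combined with $N\leq 2$, the linear system for $(n_{0},n_{1},n_{2})$ forces $n_{1}=0$, confirming that $\A$ is a KM-arc of type $q/4$ with nucleus $N$ and $\frac{q}{4}$-secants $\ell_{0},\dots,\ell_{4}$.

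For the $\ell_{0}$-structure, field reduction realizes each $\S_{i}\cup\{N\}$ as $\B(\mu_{i})$, with $\mu_{i}:=\langle(v,0):v\in V_{i}\rangle_{\F_{2}}\oplus\F_{2}\cdot(z_{i}^{*},1)$, where $V_{i}\subset\F_{q}$ is the $(h-2)$-dimensional linearization of the two trace conditions defining $\S_{i}$ and $z_{i}^{*}$ a coset representative; this makes $\S_{i}\cup\{N\}$ an $(h-2)$-club of rank $h-1$. Each $\D_{ij}$ is then computed by intersecting lines $PQ$, $P\in\S_{i}$, $Q\in\S_{j}$, with $\ell_{0}$, and using $\xi=\beta+\gamma+1$ to pair up the second coordinates yields $\D_{12}=\D_{34}$, $\D_{13}=\D_{24}$, $\D_{14}=\D_{23}$. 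The remaining dichotomy hinges on whether $\alpha=1/\beta^{2}$: in that degenerate case $\gamma=\beta$ and $\xi=1$, forcing $1/(\alpha\gamma)=1/(\alpha\beta)$ and $1/\xi=1$, so all four subspaces $V_{i}$ coincide and $|\D_{ij}|=q/4$ gives property~(I); otherwise the identity forces each sum $V_{i}+V_{j}$ to have $\F_{2}$-codimension $1$ in $\F_{q}$, so $|\D_{ij}|=q/2$ and the pairwise- and triple-intersection requirements of property~(II) fall out from one more application of the identity. The main obstacle is the triple-exclusion step in the second paragraph: its only essential content is the displayed identity, but applying it requires a separate short calculation for each of the ten triples, after which everything, including the (I)/(II) dichotomy, follows by routine bookkeeping.
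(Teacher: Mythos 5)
Your proposal is correct and follows essentially the same route as the paper: the ten triple-exclusion checks you defer (all driven by $\xi=\beta+\gamma+1$, i.e.\ $1+\tfrac{1}{\alpha\beta}+\tfrac{1}{\alpha\gamma}+\tfrac{1}{\xi}=0$) are exactly the trace computations the paper carries out, and your treatment of the $\D_{ij}$ with the dichotomy $\alpha\beta^{2}=1$ (equivalently $\beta=\gamma$) versus $\beta\neq\gamma$, resting on Lemma~\ref{tracesysteemalg}, is the paper's case split for properties (I) and (II). The only minor difference is that you make explicit, via the moment count forcing $n_{1}=0$, why excluding three collinear points suffices, whereas the paper simply asserts that it is enough to check that lines containing at least two points of $\A$ contain exactly two.
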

\begin{proof}
  First note that $\xi+\beta+\gamma=1$. This observation is useful during the proof.
  \par Clearly the lines $\ell_{0}:Z=0$, $\ell_{1}:Y=0$, $\ell_{2}:Y=Z$, $\ell_{3}:Y=\gamma Z$ and $\ell_{4}:Y=(\beta+1)Z$ each contain $\frac{q}{4}$ points of $\A$. For $\A$ to be a KM-arc it is sufficient to check whether all other lines that contain at least two points of $\A$ contain precisely two points of $\A$.
  \par The line $\left\langle(x,1,0),(y,0,1)\right\rangle$ where $x,y\in \F_q$, with $\T(x)=0$, $\T\left(\frac{x}{\alpha}\right)=a$, $\T(y)=0$ and $\T\left(\frac{y}{\alpha\gamma}\right)=0$ contains a point of both $\S_{0}$ and $\S_{1}$, and contains the points $(x+y,1,1)$, $(x\gamma+y,\gamma,1)$ and $(x(\beta+1)+y,\beta+1,1)$ on $\ell_{2},\ell_{3},\ell_{4}$ respectively. Now,
  \begin{align*}
    \T(x+y)&=\T(x)+\T(y)=0+0=0\neq1\;,\\
    \T\left(\frac{x\gamma+y}{\alpha\gamma}\right)&=\T\left(\frac{x}{\alpha}\right)+\T\left(\frac{y}{\alpha\gamma}\right)=a+0=a\neq a+1\;,\\
    \T\left(\frac{x(\beta+1)+y}{\alpha\beta}\right)+\T\left(\frac{x(\beta+1)+y}{\xi}\right)&=\T\left(\frac{x}{\alpha}\right)+\T\left(\frac{x\gamma}{\alpha\beta\gamma}\right)\\&\qquad+\T\left(\frac{(1+\beta)x}{\alpha\beta\gamma}\right)+\T\left(\frac{y(\gamma+1))}{\alpha\beta\gamma}\right)\\
    &=a+\T\left(\frac{(1+\beta+\gamma)x}{\alpha\beta\gamma}\right)+\T\left(\frac{y(\alpha+1))}{\alpha\gamma(\alpha\beta+1)}\right)\\
    &=a+\T(x)+\T(y)+\T\left(\frac{y}{\alpha\gamma}\right)\\
    &=a+0+0+0=a\neq a+1=(a+b+1)+b\;.
  \end{align*}  
  \par So, $(x+y,1,1),(x\gamma+y,\gamma,1),(x(\beta+1)+y,\beta+1,1)\notin\A$. Analogously, on the line through $(x,1,0)\in\S_{0}$ and $(y,1,1)\in\S_{2}$ the points $(x(\gamma+1)+y,\gamma,1)$ and $(x\beta+y,\beta+1,1)$ are not contained in $\S_{3}$ and $\S_{4}$, respectively, since
  \begin{align*}
    \T\left(\frac{x(\gamma+1)+y}{\alpha\gamma}\right)+\T\left(\frac{x(\gamma+1)+y}{\xi}\right)&=\T\left(\frac{(x(\gamma+1)+y)(\beta+1)}{\alpha\beta\gamma}\right)\\
    &=\T\left(\frac{x}{\alpha}\right)+\T\left(\frac{x(1+\beta+\gamma)}{\alpha\beta\gamma}\right)\\
    &\qquad+\T\left(\frac{y(\alpha\beta+1)}{\alpha\beta}\right)\\
    &=a+\T(x)+\T(y)+\T\left(\frac{y}{\alpha\beta}\right)\\
    &=a+b+1\neq a+b=(a+1)+(b+1)\;,\\
    \T\left(\frac{x\beta+y}{\alpha\beta}\right)&=\T\left(\frac{x}{\alpha}\right)+\T\left(\frac{y}{\alpha\beta}\right)=a+b\neq a+b+1\;.
  \end{align*}
  On the line through $(x,1,0)\in\S_{0}$ and $(y,\gamma,1)\in\S_{3}$ the point $(x(\beta+\gamma+1)+y,\beta+1,1)$ is not contained in $\S_{4}$ since
  \[
    \T\left(\frac{x(\beta+\gamma+1)+y}{\xi}\right)=\T\left(\frac{x(\beta+1)\alpha \beta}{(\alpha\beta+1)\alpha\beta\gamma}\right)+\T\left(\frac{y}{\xi}\right)=\T(x)+b+1=b+1\neq b\;.
  \]
  On the line through $(x,0,1)\in\S_{1}$ and $(y,1,1)\in\S_{2}$ the points $(x(\gamma+1)+y\gamma,\gamma,1)$ and $(x\beta+y(\beta+1),\beta+1,1)$ are not contained in $\S_{3}$ and $\S_{4}$, respectively, since
  \begin{align*}
    \T\left(\frac{x(\gamma+1)+y\gamma}{\xi}\right)&=\T\left(\frac{x(\alpha+1)}{\alpha\gamma(\alpha\beta+1)}\right)+\T\left(\frac{y}{\alpha\beta}\right)=\T\left(\frac{x(\alpha\gamma+1)}{\alpha\gamma}\right)+b\\&=\T(x)+\T\left(\frac{x}{\alpha\gamma}\right)+b=b\neq b+1\;,\\
    \T\left(\frac{x\beta+y(\beta+1)}{\xi}\right)&=\T\left(\frac{x}{\alpha\gamma}\right)+\T\left(\frac{y(\alpha\beta+1)}{\alpha\beta}\right)=\T\left(y\right)+\T\left(\frac{y}{\alpha\beta}\right)=b+1\neq b\;.
  \end{align*}
  On the line through $(x,0,1)\in\S_{1}$ and $(y,\gamma,1)\in\S_{3}$ the point $(x\frac{\beta+\gamma+1}{\gamma}+y\frac{\beta+1}{\gamma},\beta+1,1)$ is not contained in $\S_{4}$ since
  \begin{align*}
    \T\left(\frac{\frac{x(\beta+\gamma+1)}{\gamma}+\frac{y(\beta+1)}{\gamma}}{\alpha\beta}\right)&=\T\left(\frac{x\xi}{\xi}\right)+\T\left(\frac{y(\beta+1)}{\alpha\beta\gamma}\right)=\T(x)+\T\left(\frac{y}{\alpha\beta}\right)+\T\left(\frac{y}{\xi}\right)\\
    &=a+b\neq a+b+1\;.
  \end{align*}
  Finally, on the line through $(x,1,1)\in\S_{2}$ and $(y,\gamma,1)\in\S_{3}$ the point $(x(1+\frac{\beta}{\gamma+1})+y\frac{\beta}{\gamma+1},\beta+1,1)$ is not contained in $\S_{4}$ since
  \begin{align*}
    \T\left(\frac{x(1+\frac{\beta}{\gamma+1})+y\frac{\beta}{\gamma+1}}{\alpha\beta}\right)+\T\left(\frac{x(1+\frac{\beta}{\gamma+1})+y\frac{\beta}{\gamma+1}}{\xi}\right)&=\T\left(\frac{x(1+\beta+\gamma)+y\beta}{\xi}\right)\\
    &=\T\left(\frac{x(\beta+1)}{\gamma(\alpha\beta+1)}\right)+\T\left(\frac{y}{\alpha\gamma}\right)\\
    &=\T(x)+a+1=a\neq a+1\;.
  \end{align*}
  This finishes the proof that $\A$ is a KM-arc of type $\frac{q}{4}$. We now show that it has property (I) or (II) with respect to the line $\ell_{0}$. The first requirement, which is the same for both property (I) and (II), is fulfilled: since it follows directly from the definition that $\S_{i}\cap\{N\}$ is an $(h-2)$-club of rank $h-1$. Now, we need to distinguish between two cases.
  \par First we assume that $\beta=\gamma$, which is equivalent to $\alpha\beta^{2}=1$. Using the notation from Definition \ref{starproperty}, we see that
  \begin{align*}
    \D_{12}&=\{(z,1,0)\mid z\in \F_q,\T(z)=1,\T\left(\frac{z}{\alpha\beta}\right)=b\}\text{ and}\\
    \D_{34}&=\{(z,1+\beta+\gamma,0)\mid z\in \F_q,\T\left(\frac{z}{\alpha\beta}\right)=b,\T\left(\frac{z}{\xi}\right)=1\}\\
    &=\{(z,1,0)\mid z\in \F_q,\T\left(\frac{z}{\alpha\beta}\right)=b,\T(z)=1\}\;,\\
    \D_{13}&=\{(z,\gamma,0)\mid z\in \F_q,\T(z)=b+1,\T\left(\frac{z}{\alpha\beta}\right)=a+1\}\text{ and}\\
    \D_{24}&=\{(z,\beta,0)\mid z\in \F_q,\T(z)=b+1,\T\left(\frac{z}{\alpha\beta}\right)=a+1\}\;,\\
    \D_{14}&=\{(z,\beta+1,0)\mid z\in \F_q,\T(z)=b,\T\left(\frac{z}{\alpha\beta}\right)=a+b+1\}\text{ and}\\%=\{(z,1,0)\mid\T\left(\frac{z(a+1)}{\alpha}\right)=a+1\}
    \D_{23}&=\{(z,\gamma+1,0)\mid z\in \F_q,\T(z)=b,\T\left(\frac{z}{\alpha\beta}\right)=a+b+1\}\;.%=\{(z,1,0)\mid\T\left(\frac{z(a+1)}{\alpha}\right)=a+1\}
  \end{align*}
  It follows that $\D_{12}=\D_{34}$, $\D_{13}=\D_{24}$ and $\D_{14}=\D_{23}$ since $\beta=\gamma$ and that $\D_{12}\cup\{N\}$, $\D_{13}\cup\{N\}$ and $\D_{14}\cup\{N\}$ are $(h-2)$-clubs of rank $h-1$ with head $N$. So, the second and the third requirement of property (I) are fulfilled. Using $\a\beta^{2}=1$, we find that
  \begin{align*}
    \D_{12}&=\{(z,1,0)\mid z\in \F_q,\T(z)=1,\T\left(\frac{z}{\alpha\beta}\right)=b\}=\{(z,1,0)\mid\T(z)=1,\T\left(\beta z\right)=b\}\\
    \D_{13}&=\{(z,\beta,0)\mid z\in \F_q,\T(z)=b+1,\T\left(\frac{z}{\alpha\beta}\right)=a+1\}\\
    &=\{(z,1,0)\mid z\in \F_q,\T(\beta z)=b+1,\T\left(\frac{z}{\alpha}\right)=a+1\}\\
    \D_{14}&=\{(z,\beta+1,0)\mid z\in \F_q,\T(z)=b,\T\left(\frac{z}{\alpha\beta}\right)=a+b+1\}\\
    &=\{(z,1,0)\mid z\in \F_q,\T((\beta+1)z)=b,\T\left(\beta(\beta+1)z\right)=a+b+1\}\\
    &=\{(z,1,0)\mid z\in \F_q,\T(\beta z)+\T(z)=b,\T\left(\beta z\right)+\T\left(\frac{z}{\alpha}\right)=a+b+1\}
  \end{align*}
  and from this observation the final requirement of property (I), namely that $\D_{12}\cap\D_{13}=\D_{12}\cap \D_{14}=\D_{13}\cap \D_{14}=\emptyset$ follows. So, in this case $\A$ has property (I) with respect to $\ell_{0}$.
  \par Now, we assume that $\beta\neq\gamma$. We find
  \begin{align*}
    \D_{12}&=\{(z,1,0)\mid z\in \F_q,\T(z)=1\}\text{ and}\\
    \D_{34}&=\{(z,1+\beta+\gamma,0)\mid z\in \F_q,\T\left(\frac{z}{1+\beta+\gamma}\right)=1\}=\{(z,1,0)\mid\T(z)=1\}\;,\\
    \D_{13}&=\{(z,\gamma,0)\mid z\in \F_q,\T\left(\frac{z}{\alpha\gamma}\right)=a+1\}=\{(z,1,0)\mid\T\left(\frac{z}{\alpha}\right)=a+1\}\text{ and}\\
    \D_{24}&=\{(z,\beta,0)\mid z\in \F_q,\T\left(\frac{z}{\alpha\beta}\right)=a+1\}=\{(z,1,0)\mid\T\left(\frac{z}{\alpha}\right)=a+1\}\;,\\
    \D_{14}&=\{(z,\beta+1,0)\mid z\in \F_q,\T\left(\frac{z(\alpha+1)}{\alpha(\beta+1)}\right)=a+1\}\\
    &=\{(z,1,0)\mid\T\left(\frac{z(\alpha+1)}{\alpha}\right)=a+1\}\text{ and}\\
    \D_{23}&=\{(z,\gamma+1,0)\mid z\in \F_q,\T\left(\frac{z(\alpha+1)}{\alpha(\gamma+1)}\right)=a+1\}\\
    &=\{(z,1,0)\mid\T\left(\frac{z(\alpha+1)}{\alpha}\right)=a+1\}\;.
  \end{align*}
  We observe that $\D_{12}=\D_{34}$, $\D_{13}=\D_{24}$ and $\D_{14}=\D_{23}$, so the second requirement of property (II) is fulfilled. It is clear that the sets $\D_{12}\cup\{N\}$, $\D_{13}\cup\{N\}$ and $\D_{14}\cup\{N\}$, with $N(1,0,0)$ the nucleus of $\A$, are $(h-1)$-clubs of rank $h$, so also the third requirement is fulfilled. By Lemma \ref{tracesysteemalg}, the intersections $\D_{12}\cap\D_{13}$, $\D_{12}\cap\D_{14}$ and $\D_{13}\cap\D_{14}$ each contain $\frac{q}{4}$ points. The final requirement, that $\D_{12}\cap\D_{13}\cap\D_{14}=\emptyset$ follows from the observation that 
  \[
    z+\frac{z}{\alpha}=\frac{(\alpha+1)z}{\alpha}\;.\qedhere
  \]
\end{proof}

%\begin{theorem}\label{ourconstruction} $a,b\alpha,\beta,\gamma,\delta$ as follows:
 %$$\alpha+\beta+\gamma=1$$
 %$$\delta=\beta+1$$
 %$$\alpha=\frac{a\beta(\beta+1)}{a\beta+1}$$
 %$$\alpha,\beta,\gamma,\delta\neq 1$$
%Consider the following sets of points
%\begin{itemize}
%\item[A] $(z,1,0)$ with $\T(z)=0$ and $\T(\frac{z}{a})+\T(\frac{b}{a})=0$.
%\item[B]
%$(z,0,1)$ with  $\T(z)=0$ and $\T(\frac{z}{a\gamma})=0$
%\item[C] $(z,1,1)$ with $\T(z)=1$, $\T(\frac{z}{a\beta})=par$
%\item[D] $(z,\alpha+\delta,1)$ with $\T(\frac{z}{a\gamma})=0$ and $\T(\frac{z}{\alpha})=\T(\frac{b}{a})+1$
%\item[E] $(z,\delta,1)$ with $\T(\frac{z}{a\beta})=par+\T(\frac{b}{a})+1$ and $\T(\frac{z}{\alpha})=par$
%\end{itemize}
%Then the union of the sets $A,B,C,D,E$ is a $(0,2,q/4)$-arc of size $q+q/4$ in $\PG(2,q)$.
%\end{theorem}

\begin{remark}
  Recall that the construction of Theorem \ref{ourconstruction} depends on four parameters $\a,\beta\in\F_{q}\setminus\{0,1\}$ and $a,b\in\F_{2}$, $\a\beta\neq1$. Therefore, we denote this KM-arc of type $\frac{q}{4}$ by $\A_{\alpha,\beta,a,b}$.
\end{remark}

\begin{theorem}
  Let $\a,\beta\in\F_{q}\setminus\{0,1\}$, with $\a\beta\neq1$, and $a,a',b,b'\in\F_{2}$. The KM-arcs $\A_{\alpha,\beta,a,b}$ and $\A_{\alpha,\beta,a',b'}$ are $\PGL$-equivalent.
  %The example with $par=0$ is projectively equivalent to the one with $par=1$. ($par=c$) WAT MET $b$?
\end{theorem}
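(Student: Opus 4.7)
The plan is to realise the $\PGL$-equivalence by a single elation with centre the common nucleus $N=(1,0,0)$. All five $\frac{q}{4}$-secants $\ell_{0},\dots,\ell_{4}$ of Theorem \ref{ourconstruction} pass through $N$, so every collineation of the form
\[
  \varphi_{u,v}\colon (x,y,z)\mapsto (x+uy+vz,\,y,\,z),\qquad u,v\in\F_{q},
\]
is an elation with centre $N$ stabilising each $\ell_{i}$. I would seek $u,v$, depending on $a+a'$ and $b+b'$, such that $\varphi_{u,v}(\S_{i})=\S'_{i}$ for every $i=0,\dots,4$, where $\S'_{i}$ is the analogue of $\S_{i}$ with $(a,b)$ replaced by $(a',b')$.

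Applying $\varphi_{u,v}$ to each $\S_{i}$ and using $\F_{2}$-linearity of $\T$, one rewrites the defining conditions of $\S'_{i}$ in terms of the new first coordinate. From $\S_{0}$ and $\S_{1}$ the four equations
\[
  \T(u)=0,\quad \T(u/\alpha)=a+a',\quad \T(v)=0,\quad \T(v/(\alpha\gamma))=0
\]
drop out immediately. From $\S_{2},\S_{3},\S_{4}$ six further equations appear, involving $\T((u+v)/(\alpha\beta))$, $\T((u\gamma+v)/(\alpha\gamma))$, $\T((u\gamma+v)/\xi)$, $\T((u(\beta+1)+v)/(\alpha\beta))$ and $\T((u(\beta+1)+v)/\xi)$. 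Combining the identity $\xi=1+\beta+\gamma$ recorded at the start of the proof of Theorem \ref{ourconstruction} with the reformulations $\gamma/\xi=1/(\alpha\beta)$ and $(\beta+1)/\xi=1+1/(\alpha\beta)$ of $\gamma(\alpha\beta+1)=\beta+1$, each of these six equations collapses either to a consequence of the above or to one of
\[
  \T(u/(\alpha\beta))+\T(v/(\alpha\beta))=b+b'\quad\text{and}\quad \T(u/(\alpha\beta))+\T(v/\xi)=b+b'\;.
\]

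To finish, I would invoke Lemma \ref{tracesysteemalg}. Since $\alpha\notin\F_{2}$, the two conditions on $u$ admit $q/4$ solutions; setting $c:=\T(u/(\alpha\beta))$, the equations on $v$ read $\T(v)=0$, $\T(v/(\alpha\gamma))=0$, $\T(v/(\alpha\beta))=b+b'+c$ and $\T(v/\xi)=b+b'+c$. Dividing $\xi=1+\beta+\gamma$ by $\xi=\alpha\beta\gamma$ gives the key identity $1/(\alpha\gamma)+1/(\alpha\beta)+1/\xi=1$, from which $\T(v)=0$ follows automatically, and in the generic case $\beta\neq\gamma$ the triple $\{1/(\alpha\gamma),1/(\alpha\beta),1/\xi\}$ is $\F_{2}$-independent (any dependence otherwise would force $\xi=0$, $\beta=1$ or $\gamma=1$, all excluded), so the lemma yields $q/8$ admissible $v$. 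The principal obstacle is the middle step: the six awkward trace equations coming from $\S_{2},\S_{3},\S_{4}$ must be rewritten using the above identities for $\gamma$ and $\xi$ before one sees that they reduce to the two displayed equations; once this collapse is carried out, the existence of $u$ and $v$ is immediate from Lemma \ref{tracesysteemalg}.
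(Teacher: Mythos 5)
Your strategy is the same as the paper's: the paper also realises the equivalence by an elation with centre $N=(1,0,0)$, induced by the matrix $\left(\begin{smallmatrix}1&\nu&\rho\\0&1&0\\0&0&1\end{smallmatrix}\right)$, reduces everything to trace conditions on the two parameters, and settles existence with Lemma \ref{tracesysteemalg}. Your middle step is carried out correctly: the ten conditions do collapse, via $\gamma/\xi=1/(\alpha\beta)$, $(\beta+1)/\xi=1+1/(\alpha\beta)$ and $1/(\alpha\gamma)+1/(\alpha\beta)+1/\xi=1$, to $\T(u)=0$, $\T(u/\alpha)=a+a'$, $\T(v)=\T(v/(\alpha\gamma))=0$ and the two displayed equations, exactly the system the paper solves with $(u,v)=(\nu,\rho)$.

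The genuine gap is your restriction to ``the generic case $\beta\neq\gamma$''. The theorem allows $\alpha\beta^{2}=1$, i.e.\ $\beta=\gamma$, and this is not a throwaway corner: it is precisely the translation case on which Theorem \ref{case2} later relies. When $\beta=\gamma$ one has $\alpha\gamma=\alpha\beta$ and $\xi=1$, so your triple $\{1/(\alpha\gamma),1/(\alpha\beta),1/\xi\}$ is dependent, the conditions $\T(v/(\alpha\gamma))=0$ and $\T(v/(\alpha\beta))=b+b'+c$ collide, and solvability forces $c=\T(u/(\alpha\beta))=b+b'$. So you cannot treat $c$ as an unconstrained by-product of $u$: you must add $\T(u/(\alpha\beta))=b+b'$ as a third condition on $u$, which Lemma \ref{tracesysteemalg} grants exactly when $1,1/\alpha,1/(\alpha\beta)$ are $\F_{2}$-independent, i.e.\ unless $\alpha\beta+\beta+1=0$; combined with $\alpha\beta^{2}=1$ this exceptional situation means $\beta^{2}+\beta+1=0$ (so $\alpha=\beta\in\F_{4}\setminus\F_{2}$), where an elation with centre $N$ exists only if $a+a'=b+b'$. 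In fairness, the paper's own existence argument for $\rho$ (it only checks $1+1/(\alpha\gamma)+1/(\alpha\beta)\neq0$) silently skips the same degenerate case, so your route is no worse than the printed one; but as written your proposal explicitly excludes $\beta=\gamma$ and therefore does not prove the statement for all admissible $\alpha,\beta$. A small further slip: a dependence of your triple never forces $\xi=0$; the possible degenerations are $\beta=\gamma$, $\beta=1$ and $\gamma=1$ (equivalently $\alpha=1$), while the full sum equals $1$.
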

\begin{proof}
  Let $\nu\in\F_{q}$ be an element such that $\T(\nu)=0$ and $\T(\frac{\nu}{\alpha})=a+a'$. By Lemma \ref{tracesysteemalg} we know such an element exists since $\a\neq1$. Denote $\T(\frac{\nu}{\alpha\beta})$ by $k$.
  \par Let $\gamma$ be as in the construction of $\A_{\alpha,\beta,a,b}$ in Theorem \ref{ourconstruction}. Now, let $\rho$ be an element in $\F_{q}$ such that $\T(\rho)=0$, $\T(\frac{\rho}{\alpha\gamma})=0$ and $\T(\frac{\rho}{\alpha\beta})=k+b'+b$. Such an element exists by Lemma \ref{tracesysteemalg} since
  \[
    1+\frac{1}{\alpha\gamma}+\frac{1}{\alpha\beta}=\frac{\alpha\beta\gamma+\gamma+\beta}{\alpha\beta\gamma}=\frac{\alpha\beta\gamma+1+\alpha\beta\gamma}{\alpha\beta\gamma}=\frac{1}{\alpha\beta\gamma}\neq0\;.
  \]
  Now, the collineation induced by the matrix $\left(\begin{smallmatrix}1&\nu&\rho\\0&1&0\\0&0&1\end{smallmatrix}\right)$ maps $\A_{\alpha,\beta,a,b}$ on $\A_{\alpha,\beta,a',b'}$. To check that the sets $\S_{3}$ and $\S_{4}$ are indeed mapped onto points of $\A_{\alpha,\beta,a',b'}$ it is useful to note that
  \begin{align*}
    \T\left(\frac{\rho}{\xi}\right)&=\T\left(\rho\right)+\left(\frac{\rho}{\alpha\beta}\right)+\T\left(\frac{\rho}{\alpha\gamma}\right)=k+b+b'\\
    \T\left(\frac{\nu(\beta+1)}{\alpha\beta}\right)&=\T\left(\frac{\nu}{\alpha}\right)+\T\left(\frac{\nu}{\alpha\beta}\right)=k+a+a'\\
    \T\left(\frac{\nu(\beta+1)}{\xi}\right)&=\T\left(\frac{\nu(\alpha\beta+1)}{\alpha\beta}\right)=k\;.  \qedhere
  \end{align*}
\end{proof}

\begin{theorem}\label{transliff}
  Let $\a,\beta\in\F_{q}\setminus\{0,1\}$, with $\a\beta\neq1$. The KM-arc $\A_{\alpha,\beta,0,0}$ is a translation KM-arc if and only if $\a\in\{\frac{1}{\beta^{2}},1+\frac{1}{\beta},\beta,\frac{1}{\sqrt{\beta}},\frac{1}{\beta+1}\}$.
\end{theorem}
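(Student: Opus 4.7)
The proof proceeds by examining each of the five $\frac{q}{4}$-secants of $\mathcal{A}_{\alpha,\beta,0,0}$ as a candidate translation line. By \cite[Proposition 6.2]{km} the translation line of a translation KM-arc is itself a $\frac{q}{4}$-secant, so the translation line, if it exists, must be one of $\ell_0,\ell_1,\ell_2,\ell_3,\ell_4$. By Lemma \ref{translationprop1}, a necessary condition for $\ell_i$ to be the translation line is that $\mathcal{A}_{\alpha,\beta,0,0}$ has property (I) with respect to $\ell_i$.

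For $\ell_0$ the relevant computation is essentially done during the proof of Theorem \ref{ourconstruction}: property (I) holds with respect to $\ell_0$ precisely when $\beta = \gamma$, that is, when $\alpha\beta^2 = 1$, equivalently $\alpha = 1/\beta^2$. Conversely, when $\alpha = 1/\beta^2$, one checks (using the explicit descriptions of $\D_{12},\D_{13},\D_{14}$ given in that proof) that the set $(\ell_0 \setminus \mathcal{S}_0) \cup \{N\}$ is an $\F_2$-linear $(h-2)$-club of rank $h$ with head $N$, so by Theorem \ref{translationinverse} combined with Theorem \ref{translation} the arc is a translation KM-arc with translation line $\ell_0$. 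Hence $\alpha = 1/\beta^2$ is exactly the value for which $\ell_0$ can serve as translation line.

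For each $i \in \{1,2,3,4\}$ the plan is to exhibit a collineation $\varphi_i \in \PGL(3,q)$ fixing the nucleus $N$ and sending $\ell_i$ to $\ell_0$, so that $\varphi_i(\mathcal{A}_{\alpha,\beta,0,0})$ is a KM-arc of the same type in which $\ell_0$ plays the role originally played by $\ell_i$. After a suitable relabelling of the sets $\mathcal{S}_j$ and a renaming of parameters, $\varphi_i(\mathcal{A})$ can be identified with some arc $\mathcal{A}_{\alpha',\beta',a',b'}$ of the same family, where $\alpha',\beta'$ are concrete rational expressions in $\alpha,\beta$ (through the auxiliary quantities $\gamma$ and $\xi$). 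The condition that $\ell_0$ is the translation line of this new arc is $\alpha'(\beta')^2 = 1$, by the case $i=0$; translating this back into a relation in $(\alpha,\beta)$ yields one of the four remaining values in the statement.

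The main obstacle is the bookkeeping for the four coordinate changes: finding the correct $\varphi_i$, correctly reading off $\alpha'$ and $\beta'$ after relabelling, and simplifying the resulting identity. The relations $\gamma = (\beta+1)/(\alpha\beta+1)$, $\xi = \alpha\beta\gamma$ and $\xi + \beta + \gamma = 1$ should make the simplifications tractable, and the appearance of $\sqrt{\beta}$ in one of the four values signals that one of the coordinate changes interacts nontrivially with the Frobenius automorphism (so that the image parameters depend on $\sqrt{\alpha}$ or $\sqrt{\beta}$ rather than on $\alpha,\beta$ themselves).
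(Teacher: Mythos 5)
The sufficiency direction of your argument has a genuine gap, and it is the heart of the matter. The step ``the set $(\ell_0\setminus\S_0)\cup\{N\}$ is an $\F_2$-linear $(h-2)$-club of rank $h$, so by Theorems \ref{translationinverse} and \ref{translation} the arc is a translation KM-arc with translation line $\ell_0$'' is not a valid implication. Theorem \ref{translation} needs the \emph{affine} points to be of the form $\B(\pi)\setminus\C$ for an $h$-space $\pi$ meeting $H$ exactly in $\mu$, and Theorem \ref{translationinverse} only says that translation arcs have this shape; neither allows you to conclude translation from information about the trace of the arc at infinity alone. In the present situation the criterion is demonstrably insufficient: for every member $\A_{\a,\beta,a,b}$ of the family one computes $(\ell_0\setminus\S_0)\cup\{N\}=\{N\}\cup\{(z,1,0)\mid \T(z)=1\ \lor\ \T(z/\a)=a+1\}$, a set that does not depend on $\beta$ at all and is an $(h-2)$-club of rank $h$ (it is precisely the club attached to the translation member with $\beta=1/\sqrt{\a}$); yet, by the very theorem you are proving, the arc is translation with respect to $\ell_0$ only when $\a\beta^{2}=1$. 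So linearity at infinity cannot detect the translation property, and something must be verified about the affine points. The paper does exactly that: for $\a=1/\beta^{2}$ it writes down the four families of elation matrices with axis $Z=0$ mapping $(0,0,1)$ to an arbitrary affine point of $\A_{\a,\beta,0,0}$ and checks they fix the arc (and argues analogously for the other four secants). Note also that you cannot repair this by invoking ``property (I) implies translation'': that is Theorem \ref{case2}, whose translation conclusion itself rests on Theorem \ref{transliff}, so the argument would be circular.

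Your necessity argument for $\ell_0$ (translation $\Rightarrow$ property (I) by Lemma \ref{translationprop1}, while Theorem \ref{ourconstruction} gives property (II) when $\beta\neq\gamma$, and the two are incompatible since $\D_{12}\cup\{N\}$ cannot have both sizes) is sound and is a legitimate alternative to the paper's computation for that one line. But for $\ell_1,\dots,\ell_4$ you only state a plan: the collineations $\varphi_i$, the identification of $\varphi_i(\A)$ with some $\A_{\a',\beta',a',b'}$ (which requires renormalising all the trace conditions, not merely relabelling secants), and the resulting relations in $(\a,\beta)$ are never produced, and these are exactly where the remaining four values, including $\frac{1}{\sqrt{\beta}}$, come from. (Incidentally, the square root arises from solving a quadratic relation among the slopes in characteristic $2$, not from any interaction with Frobenius: $\PGL$ contains no field automorphisms.) The paper obtains necessity for all five lines more cheaply, via a group-theoretic observation: an element of the translation group acts on the four non-axis $\frac{q}{4}$-secants $Y=x_iZ$ without order-$3$ elements and with $\varphi^{4}$ trivial, which forces $x_1+x_2+x_3+x_4=0$; applying this after moving each candidate secant to $Z=0$ yields the five conditions on $\a$ directly.
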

\begin{proof}
  First we prove that this condition on $\a$ and $\beta$ is necessary. We denote the nucleus $(1,0,0)$ of $\A_{\alpha,\beta,0,0}$ by $N$. Assume that the $\frac{q}{4}$-secant $\ell$ is a translation line of $\A_{\alpha,\beta,0,0}$. Denote the other four $\frac{q}{4}$-secants by $\ell_{1},\dots,\ell_{4}$. Let $G\leq\PGL(3,q)$ be the group containing all elations of $\PG(2,q)$ with axis $\ell$ that fix $\A_{\alpha,\beta,0,0}$. We consider the natural action of $G$ on the set $\mathcal{L}$ of lines through $N$. The elation $\varphi$ acts trivially on $\mathcal{L}$ if and only if $N$ is the center of $\varphi$. Let $H\leq G$ be the subgroup of elations with center $N$. If $N$ is not the center of $\varphi$, then $\varphi$ fixes precisely one element of $\mathcal{L}$, namely $\ell$. 
  \par The set $\mathcal{L}'=\{\ell_{1},\ell_{2},\ell_{3},\ell_{4}\}\subset\mathcal{L}$ is stabilised by the elements of $G$ since $G$ maps the $\frac{q}{4}$-secants necessarily onto each other. We know that an element $\varphi\in G\setminus H$ cannot have order $3$, for if $\varphi^{3}=1$, then the unique line in $\mathcal{L}'\setminus\{\ell_{1},\ell^{\varphi}_{1},\ell^{\varphi^{2}}_{1}\}$ has to be fixed, a contradiction. Consequently, $\varphi^{4}$ acts trivially on $\mathcal{L}$ for all $\varphi\in G$.
  \par Now, we assume that $\ell$ is the line $Z=0$. Let $Y=x_{i}Z$ be the equation of the line $\ell_{i}$. Any elation $\varphi$ with axis $\ell$ is given by a matrix $A=\left(\begin{smallmatrix}1&0&u\\0&1&v\\0&0&w\end{smallmatrix}\right)$, with $u,v,w\in\F_{q}$. If $\varphi\in G$, then $w^{4}=1\Leftrightarrow w=1$ since $\ell^{\varphi^{4}}_{i}=\ell_{i}$ for all $i=1,\dots,4$. We can find a $\varphi\in G$ such that $\ell^{\varphi}_{1}=\ell_{2}$ since $\ell$ is a translation line. We find that $x_{2}=x_{1}+v$. Moreover, it also follows that $\ell^{\varphi}_{3}=\ell_{4}$ (also $\ell^{\varphi}_{2}=\ell_{1}$ and $\ell^{\varphi}_{4}=\ell_{3}$). So, $x_{4}=x_{3}+v$. Consequently, $x_{1}+x_{2}+x_{3}+x_{4}=0$. Applying this to $\A_{\alpha,\beta,0,0}$, we find that if $Z=0$ is a translation line then
  \[
    0+1+\gamma+(1+\beta)=0\quad\Leftrightarrow\quad \beta=\gamma\quad\Leftrightarrow\quad \a=\frac{1}{\beta^{2}}\;.
  \]
  Using a collineation in $\PGL(3,q)$ that stabilises $N$ and that maps $\ell_{i}$ onto the line $Z=0$, we can find the necessary conditions for the other $\frac{q}{4}$-secants to be the translation line. The lines $\ell_{1}$, $\ell_{2}$, $\ell_{3}$ and $\ell_{4}$ are translation lines if
  \begin{align*}
    0+1+\frac{1}{\gamma}+\frac{1}{1+\beta}=0\quad &\Leftrightarrow\quad \gamma=1+\frac{1}{\beta}&&\Leftrightarrow\quad \a=1+\frac{1}{\beta}\\
    1+0+\frac{\gamma}{\gamma+1}+\frac{1+\beta}{\beta}=0\quad &\Leftrightarrow\quad \gamma=\frac{1}{\beta+1}&&\Leftrightarrow\quad \a=\beta\\
    \frac{1}{\gamma}+\frac{1}{\gamma+1}+\frac{1}{\beta+\gamma+1}+0=0\quad&\Leftrightarrow\quad\gamma=\sqrt{\beta+1}&&\Leftrightarrow\quad \a=\frac{1}{\sqrt{\beta}}\\
    0+\frac{1}{\beta+1}+\frac{1}{\beta}+\frac{1}{\beta+\gamma+1}=0\quad&\Leftrightarrow\quad\gamma=\beta^{2}+1&&\Leftrightarrow\quad \a=\frac{1}{\beta+1}\;,
  \end{align*}
  respectively. This concludes the first part of the proof.
  \par Now, we need to prove that the condition $\a\in\{\frac{1}{\beta^{2}},1+\frac{1}{\beta},\beta,\frac{1}{\sqrt{\beta}},\frac{1}{\beta+1}\}$ is sufficient for $\A_{\alpha,\beta,0,0}$ to be a translation KM-arc. We show that $\ell:Z=0$ is a translation line if $\a=\frac{1}{\beta^{2}}$. We consider the point $P(0,0,1)\in\A_{\alpha,\beta,0,0}\setminus\ell$. It is sufficient to prove that for any point $R\in\A_{\alpha,\beta,0,0}\setminus\ell$ the unique elation with axis $\ell$ mapping $P$ onto $R$ fixes $\A_{\alpha,\beta,0,0}$.
  \par Any point in $\A_{\alpha,\beta,0,0}\setminus\ell$ can be written in one of the following ways: 
  \begin{align*}
    &(\mu_{1},0,1)\text{, with } \T(\mu_{1})=0,\T(\beta\mu_{1})=0\;,&(\mu_{2},1,1)\text{, with } \T(\mu_{2})=1,\T(\beta\mu_{2})=0\;,\\
    &(\mu_{3},\beta,1)\text{, with } \T(\mu_{3})=1,\T(\beta\mu_{3})=1\;,&(\mu_{4},\beta+1,1)\text{, with } \T(\mu_{4})=0,\T(\beta\mu_{4})=1\;.
  \end{align*}
  It can be calculated that the elations with axis $\ell$ given by the matrices
  \[
    \begin{pmatrix}
      1&0&\mu_{1}\\
      0&1&0\\
      0&0&1
    \end{pmatrix}\;,
    \begin{pmatrix}
      1&0&\mu_{2}\\
      0&1&1\\
      0&0&1
    \end{pmatrix}\;,
    \begin{pmatrix}
      1&0&\mu_{3}\\
      0&1&\beta\\
      0&0&1
    \end{pmatrix}\text{ and }
    \begin{pmatrix}
      1&0&\mu_{4}\\
      0&1&\beta+1\\
      0&0&1
    \end{pmatrix}
  \]
  fix the KM-arc $\A_{\alpha,\beta,0,0}$, and map $P$ onto $(\mu_{1},0,1)$, $(\mu_{2},1,1)$, $(\mu_{3},\beta,1)$ and $(\mu_{4},\beta+1,1)$, respectively. Consequently, $Z=0$ is a translation line.
  \par The arguments for the other $\frac{q}{4}$-secants are analogous.
  %\par VOORWAARDES NOG EENS GOED NAKIJKEN BIJ OPSCHRIJVEN BEWIJS
\end{proof}

\begin{remark}
  For a given $\beta\in\F_{q}\setminus\{0,1\}$, $q$ even, we consider the set $\Theta=\{\frac{1}{\beta^{2}},1+\frac{1}{\beta},\beta,\frac{1}{\sqrt{\beta}},\frac{1}{\beta+1}\}$. If two of the elements in $\Theta$ coincide, then they all coincide, and $\Theta$ contains precisely one element. In this case, $\beta^{3}=1$. So, $\a=\beta$ is necessarily contained in the subfield $\F_{4}\subseteq\F_{q}$, and hence $q$ must be a square. So, if $q$ is a square, and $\beta$ is contained in $\F_{4}\subseteq\F_{q}$, then there is only one value for $\a$ such that $\A_{\alpha,\beta,0,0}$ is a translation arc, but in this case it is translation with respect to all of its $\frac{q}{4}$-secants. If $q$ is a square, but $\beta$ is not contained in $\F_{4}\subseteq\F_{q}$, or if $q$ is not a square, then there are precisely five values for $\a$ such that $\A_{\alpha,\beta,0,0}$ is a translation arc. In this case it is a translation arc with respect to only one of its $\frac{q}{4}$-secants.
\end{remark}

\begin{remark} It is not hard to check that $\A_{\frac{1}{\beta^{2}},\beta,0,0}\cong\A_{\frac{1}{\beta^{2}+1},\beta+1,0,0}$, so by choosing different parameters $\alpha,\beta$, we may end up with $\PGL$-equivalent KM-arcs of the form $\A_{\alpha,\beta,0,0}$. However, as follows from Theorem \ref{transliff}, not all KM-arcs of the form $\A_{\alpha,\beta,0,0}$ are translation KM-arcs, so the family $\A_{\a,\beta,0,0}$ where $\a$ and $\beta$ vary, certainly contains KM-arcs which are not $\PGL$-equivalent.
\end{remark}

In Theorems \ref{case1} and \ref{case2}, we will prove that KM-arcs that satisfy property (I) or property (II) arise from the construction of Theorem \ref{ourconstruction}.
Before we give the proof, we intruduce some notation to make a distinction between points of $\PG(2,2^h)$ and points of $\PG(3h-1,2)$. We will denote a point of the latter space by a vector determining this point, using the subscript $\F_2$. So for a fixed $x\in \F_{q^h}^\ast$, $(x,\T(x),0)_{\F_2}$ is a point of $\PG(3h-1,2)$. Note that $\{(x,\T(x),0)_{\F_2}\mid x\in\F_{2^h}^\ast\}$ and $\{(\zeta x,\zeta\T(x),0)_{\F_2}\mid x\in\F_{2^h}^\ast\}$, with $\zeta\in\F_{q}\setminus\{0,1\}$ are different projective subspaces of $\PG(3h-1,2)$, but they determine the same (linear) point set in $\PG(2,2^h)$.

%We will present the subspaces in $\PG(3h-1,2)$ by their corresponding linear sets in $\PG(2,q)$, using a subscript $_{2}$ to indicate that they should be considered in $\PG(3h-1,2)$, e.g. $\{(x,\T(x),0)_{2}\mid x\in\F_{q}\}$ and $\{(\zeta x,\zeta\T(x),0)_{2}\mid x\in\F_{q}\}$, with $\zeta\in\F_{q}\setminus\{0,1\}$ are different subspaces in $\PG(3h-1,2)$, but they give rise to the same linear set in $\PG(2,q)$.

\begin{theorem}\label{case1}
  Let $\A$ be a KM-arc of type $\frac{q}{4}$ in $\PG(2,q)$, with $q=2^{h}$. If $\ell$ is a $\frac{q}{4}$-secant of $\A$ such that $\A$ has property (II) with respect to $\ell$, then $\A$ is $\PGL$-equivalent to an arc given in Theorem \ref{ourconstruction} with $\a\beta^{2}\neq1$.
\end{theorem}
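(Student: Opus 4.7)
The plan is to normalise the arc by $\PGL$, exploit the three club structures guaranteed by property (II) to give concrete algebraic descriptions of $\S_1,\dots,\S_4$ and $\D_{ij}$, and then match the outcome with the five point sets of Theorem~\ref{ourconstruction}.

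First, using $\PGL(3,q)$, I would assume $\ell_0:Z=0$, $N=(1,0,0)$, $\ell_1:Y=0$, $\ell_2:Y=Z$, $\ell_3:Y=\gamma Z$ and $\ell_4:Y=(\beta+1)Z$ for some distinct $\gamma,\beta+1\in\F_q\setminus\{0,1\}$; the residual $\PGL$-freedom preserving this normalisation is the group of matrices $\left(\begin{smallmatrix}a&d&e\\0&1&0\\0&0&1\end{smallmatrix}\right)$, which acts on the affine coordinate $z$ of $\ell_i$ by $z\mapsto az+dx_i+e$. By condition (II.1) and the argument of Theorem~\ref{clueq} (adapted to a weight-$(h-2)$ head, where the hyperplane of $\F_2$-codimension two is the common kernel of two independent trace functionals), each set takes the form
\[
  \S_i=\{(z,x_i,1)\mid \T(c_iz)=k_i,\ \T(d_iz)=k'_i\}
\]
for some $\F_2$-linearly independent $c_i,d_i\in\F_q$ and $k_i,k'_i\in\F_2$. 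By condition (II.3) and Theorem~\ref{clueq} itself, $\D_{ij}=\{(y,1,0)\mid \T(c_{ij}y)=k_{ij}\}$ for some $c_{ij}\in\F_q^{\ast}$, $k_{ij}\in\F_2$.

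Next, a direct computation shows that the line through $(z_i,x_i,1)\in\S_i$ and $(z_j,x_j,1)\in\S_j$ meets $\ell_0$ in the point $\bigl((z_i+z_j)/(x_i+x_j),1,0\bigr)$. Hence the affine hyperplane on $\ell_0$ describing $\D_{ij}$ is the rescaled sumset $(T_i+T_j)/(x_i+x_j)$, where $T_i\subseteq\F_q$ is the affine $\F_2$-subspace defining $\S_i$. Comparing with the trace description forces the linear parts of $T_i$ and $T_j$ to have joint $\F_2$-dimension $h-1$, and pins down $c_{ij}$ (up to $\F_2$-scaling) as the element orthogonal to both. Then conditions (II.2), (II.4), (II.5) together with Lemma~\ref{tracesysteemalg} force that $c_{12},c_{13}$ are $\F_2$-independent, $c_{14}=c_{12}+c_{13}$, and $k_{12}+k_{13}+k_{14}=1$. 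Tracing this back through the identification from Step~2, one deduces that, up to $\F_2$-scaling and relabelling, $(c_1,d_1)=(c_{13},c_{14})$, $(c_2,d_2)=(c_{12},c_{14})$, $(c_3,d_3)=(c_{13},c_{14})$ scaled by $\gamma^{-1}$, and $(c_4,d_4)$ similarly scaled by $(\beta+1)^{-1}$.

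I would then use the residual $\PGL$-freedom from Step~1 to normalise $c_{12}=1$ and to absorb the shifts $dx_i+e$; writing $c_{13}=1/\alpha$ gives $c_{14}=(\alpha+1)/\alpha$ for a unique $\alpha\in\F_q\setminus\{0,1\}$. Working out which affine hyperplanes appear on $\ell_3,\ell_4$ using the equalities $\D_{13}=\D_{24}$ and $\D_{14}=\D_{23}$ yields $c_{13}/\gamma=1/(\alpha\gamma)$ and forces the identity $\gamma(\alpha\beta+1)=\beta+1$, i.e.\ $\gamma=(\beta+1)/(\alpha\beta+1)$ and hence $\xi=\alpha\beta\gamma$ with $\xi+\beta+\gamma=1$; this is exactly the defining relation in Theorem~\ref{ourconstruction}. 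The remaining binary parameters $k_i,k'_i$ collapse to two independent bits which can be identified with the $a,b\in\F_2$ of that construction (the others being forced by the algebraic relations already derived). Finally, $\alpha\beta^2\neq 1$ is automatic: if $\gamma=\beta$ then, as in Theorem~\ref{ourconstruction}, $\D_{12}\cup\{N\}$ would be an $(h-2)$-club of rank $h-1$ rather than an $(h-1)$-club of rank $h$, violating (II.3).

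The main obstacle is the combinatorial matching in the third paragraph: tracking the various $\F_2$-linear relations among the $c_i,d_i,c_{ij}$ simultaneously, and identifying precisely the scalar that should play the role of $\alpha$, is delicate. In particular one must check that the relation $\xi+\beta+\gamma=1$ arises from the three equalities $\D_{ij}=\D_{kl}$ of (II.2) rather than being assumed, and that the residual $\PGL$-freedom is exactly rich enough to bring the parameters into the canonical form without overspending it.
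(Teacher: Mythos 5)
Your overall strategy is essentially a coordinatised (dual) version of the paper's argument: normalise with Theorem \ref{clueq}, write each $\S_i$ as a coset of a codimension-two intersection of trace kernels and each $\D_{1j}$ as a coset of a trace hyperplane, and force the parameters via Lemma \ref{tracesysteemalg} (the paper tracks the same data through the subspaces $\sigma_i,\mu_{ij}$ in $\PG(3h-1,2)$ and the scalars $\beta,\gamma,\delta,\varepsilon,\xi$ relating them). Your second paragraph is correct: (II.4) makes $c_{12},c_{13},c_{14}$ pairwise distinct and (II.5) then forces $c_{14}=c_{12}+c_{13}$ together with $k_{12}+k_{13}+k_{14}=1$.

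The genuine gap is in your third paragraph, which is where the proof actually lives. The passage from the pairs $(c_i,d_i)$ to the functionals $c_{1j}$ must carry the slope factors: if $e_{ij}$ spans $\langle c_i,d_i\rangle\cap\langle c_j,d_j\rangle$, then the projection computation gives $c_{ij}=(x_i+x_j)e_{ij}$, so for instance $\langle c_1,d_1\rangle=\langle c_{12},\,c_{13}/\gamma\rangle$, \emph{not} $\langle c_{13},c_{14}\rangle$. You can test your claimed identification against the construction itself: for $\A_{\alpha,\beta,a,b}$ one has $\langle c_1,d_1\rangle=\langle 1,\tfrac{1}{\alpha\gamma}\rangle$ while $\langle c_{13},c_{14}\rangle=\langle 1,\tfrac{1}{\alpha}\rangle$, and these coincide only when $\gamma\in\{1,\tfrac{1}{\alpha+1}\}$; the analogous claims for $(c_2,d_2)$, $(c_3,d_3)$, $(c_4,d_4)$ fail similarly. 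It is exactly this slope bookkeeping, applied to the equalities $\D_{13}=\D_{24}$, $\D_{14}=\D_{23}$, $\D_{12}=\D_{34}$ (together with non-degeneracy facts such as $e_{12}\neq e_{13}$, which also needs an argument), that in the paper yields $\gamma=\frac{\beta+1}{\alpha\beta+1}$, $\xi=\alpha\beta\gamma$ and $\delta=\beta+1$ -- and this is precisely the step you leave as an ``obstacle'' rather than carry out. Likewise, the collapse of the eight constants $k_i,k_i'$ to the two bits $a,b$, and the recovery of $\A\cap\ell_0$ as the complement of $\D_{12}\cup\D_{13}\cup\D_{14}$ (which uses (II.4), (II.5) and the KM-arc property), are asserted without proof; in the paper these occupy most of the computation (the constants $b,d,d',e,e'$). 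So the plan is viable and parallel to the paper's, but with the central identification misstated and the decisive derivations missing, what you have is an outline rather than a proof.
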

\begin{proof}
  Recall that $\T$ is the absolute trace function from $\F_{q}$ to $\F_{2}$. Let $N$ be the nucleus of $\A$. Denote the four $\frac{q}{4}$-secants of $\A$ different from $\ell$ by $\ell_{1},\ell_{2},\ell_{3},\ell_{4}$, and denote $\A\cap\ell_{i}$ by $\A_{i}$ and $\A_{i}\cup\{N\}$ by $\A^{N}_{i}$, $i=1,\dots,4$. We also use the notations
  \[
    \D_{ij}=\{\left\langle P,Q\right\rangle\cap\ell\mid P\in\A_{i},Q\in\A_{j}\}\text{ and }\D^{N}_{ij}=\D_{ij}\cup\{N\}\qquad 1\leq i<j\leq 4\;.
  \]
  %First, we assume that $\A$ has property (II) with respect to $\ell$. 
  Since $\A$ has property (II) we know that $\D_{12}=\D_{34}$ and that $\D^{N}_{12}$ is an $(h-1)$-club of rank $h$ with head $N$ in $\ell$. By Theorem \ref{clueq} we can choose a frame of $\PG(2,q)$ such that $\D^{N}_{12}$ is the set $\{(x,\T(x),0)\mid x\in\F_{q}^\ast\}$, such that $(0,0,1)\in\A_{1}$ and $(1,1,1)\in\ell_{2}$. It follows that $(1,0,0)$ is the nucleus $N$ of $\A$ and that $\D_{12}=\{(x,1,0)\mid x\in\F_{q},\T(x)=1\}$. As all $(h-1)$-clubs of rank $h$ are $\PGL$-equivalent and as $\D^{N}_{13}$ has head $N$, there exists an $\a\in\F_{q}\setminus\{0,1\}$ and an $\a'\in\F_{q}$ such that $\D^{N}_{13}=\{(\a x+\a'\T(x),\T(x),0)\mid x\in\F_{q}^\ast\}$. Denote $\T\left(\frac{\a'}{\alpha}\right)=a$. It follows that the set $\D_{13}=\D_{24}$ equals $\{(\a x+\a',1,0)\mid x\in\F_{q},\T(x)=1\}$. Now, $\D^{N}_{14}$ is the unique $(h-1)$-club of rank $h$ with head $N$ that has $\frac{q}{4}$ points in common with both $\D_{12}$ and $\D_{13}$, but none with $\D_{12}\cap\D_{13}$. Hence, $\D^{N}_{14}$ equals $\{\left(\frac{\alpha}{\alpha+1}x+\frac{\a'}{\alpha+1}\T(x),\T(x),0\right)\mid x\in\F_{q}^\ast\}$ and consequently $\D_{14}=\D_{23}$ equals $\{\left(\frac{\alpha}{\alpha+1}x+\frac{\a'}{\alpha+1},1,0\right)\mid x\in\F_{q},\T(x)=1\}$.
  \par Let $\D$ be a Desarguesian spread in $\PG(3h-1,2)$, let $H$ be the $(2h-1)$-space such that $\ell=\B(H)$, and let $\rho\in\D$ be the spread element such that $\B(\rho)=N$. 
  \par We define $\sigma$ as the $(2h)$-space in $\PG(3h-1,2)$ spanned by $H$ and $(0,0,1)_{\F_2}$. Then, there is a unique $(h-2)$-space $ \sigma_{i}\subset\sigma$ such that $\B(\sigma_{i})=\A^{N}_{i}$, $i=1,\dots,4$. For every $i=1,\ldots,4$, the subspace $\sigma_{i}$ meets $\rho$ in an $(h-3)$-space; the existence of these subspaces $\sigma_{i}$ follows from the first condition for KM-arcs of property (II). Let $\mu_{ij}$ be the subspace $H\cap\left\langle\sigma_{i},\sigma_{j}\right\rangle$, for $1\leq i<j\leq 4$. Then, $\D^{N}_{ij}=\B(\mu_{ij})$, for $1\leq i<j\leq 4$.
  \par The Desarguesian spread $\D$ is fixed, and we have that $\B(\mu_{12})=\{(x,\T(x),0)\mid x\in \F_{q}^\ast\}$. This leaves us the freedom to choose coordinates of $H=\PG(2h-1,2)$ in such a way that $\mu_{12}=\{(x,\T(x),0)_{\F_2}\mid x\in\F_{q}^\ast\}$. Now we can find $\xi,\beta,\gamma,\delta,\varepsilon\in\F^{*}_{q}$ such that
  \begin{align*}
    %\mu_{12}&=\{(x,\T(x),0)_{2}\mid x\in\F_{q}\}\\
    \mu_{34}&=\{(\xi x,\xi\T(x),0)_{\F_2}\mid x\in\F_{q}^\ast\}\\
    \mu_{13}&=\{(\gamma(\alpha x+\alpha'\T(x)),\gamma\T(x),0)_{\F_2}\mid x\in\F_{q}^\ast\}\\
    \mu_{24}&=\{(\beta(\alpha x+\alpha'\T(x)),\beta\T(x),0)_{\F_2}\mid x\in\F_{q}^\ast\}\\
    \mu_{14}&=\left\{\left(\delta\left(\frac{\alpha}{\alpha+1}x+\frac{\alpha'}{\alpha+1}\T(x)\right),\delta\T(x),0\right)_{\F_2}\mid x\in\F_{q}^\ast\right\}\\
    \mu_{23}&=\left\{\left(\varepsilon\left(\frac{\alpha}{\alpha+1}x+\frac{\alpha'}{\alpha+1}\T(x)\right),\varepsilon\T(x),0\right)_{\F_2}\mid x\in\F_{q}^\ast\right\}\;.
  \end{align*}
  The $(h-2)$-space $\sigma_{1}$ must necessarily meet $\rho$ in the $(h-3)$-space $\tau_{1}=\mu_{12}\cap\mu_{13}=\{(x,0,0)\mid x\in \F_{q}^\ast, \T(x)=\T(\frac{x}{\alpha\gamma})=0\}$. By Lemma \ref{tracesysteemalg} it follows that $\a\gamma\neq1$. We know that $\mu_{14}$ must pass through $\mu_{12}\cap\mu_{13}$. Again by Lemma \ref{tracesysteemalg}, we find the necessary condition that
  \[
    1+\frac{1}{\alpha\gamma}+\frac{\alpha+1}{\alpha\delta}=0\quad\Leftrightarrow\quad\delta=\frac{(\alpha+1)\gamma}{\alpha\gamma+1}\;.
  \]
  Analogously, we know that $\tau_{2}=\mu_{12}\cap\mu_{23}\cap\mu_{24}$, $\tau_{3}=\mu_{13}\cap\mu_{23}\cap\mu_{34}$ and $\tau_{4}=\mu_{14}\cap\mu_{24}\cap\mu_{34}$ are $(h-3)$-spaces in $\rho$. It follows that $\a\beta\neq1$, $\gamma(\alpha+1)\neq\varepsilon$ and $\beta(\alpha+1)\neq\delta$, and moreover
  \begin{align*}
    1+\frac{1}{\alpha\beta}+\frac{\alpha+1}{\alpha\varepsilon}=0\quad&\Leftrightarrow\quad\varepsilon=\frac{(\alpha+1)\beta}{\alpha\beta+1}\;,\\
    \frac{1}{\alpha\gamma}+\frac{\alpha+1}{\alpha\varepsilon}+\frac{1}{\xi}=0\quad&\Leftrightarrow\quad\frac{1}{\beta}+\frac{1}{\gamma}=\a\left(1+\frac{1}{\xi}\right)\;.
  \end{align*}
  Note that the equation $\frac{\alpha+1}{\alpha\delta}+\frac{1}{\alpha\beta}+\frac{1}{\xi}=0$ derived from $\tau_{4}$ is redundant.
  \par If the $(h-3)$-spaces $\tau_{i}$ and $\tau_{j}$ coincide, then $\left\langle\sigma_{i},\sigma_{j}\right\rangle\cap H$ cannot be an $(h-1)$-space. Hence, no two of the $(h-3)$-spaces $\tau_{1},\dots,\tau_{4}$ may coincide. Consequently, $\beta\neq\gamma$ and $\xi\neq1$.
  \par Since the set $\A^{N}_{1}$ contains $(0,0,1)$ and since $\sigma_{1}\subset\sigma$, the subspace $\sigma_{1}$ must contain $(0,0,1)_{\F_2}$. Hence, the points of $\sigma_{1}\setminus H$ are the points $\{(x,0,1)_{\F_2}\mid x\in \F_q, \T(x)=\T(\frac{x}{\alpha\gamma})=0\}$ and consequently $\A_{1}=\{(x,0,1)\mid x\in \F_q, \T(x)=\T(\frac{x}{\alpha\gamma})=0\}$.
  \par Let $\rho'\in\D$ be the spread element such that $\B(\rho')$ is the point $(1,1,1)$. Then the intersection $\rho'\cap\sigma$ only contains one point, namely $(1,1,1)_{\F_{2}}$. The $(h-2)$-space $\sigma_{2}$ is contained in $\left\langle \rho,\rho'\right\rangle$ since $(1,1,1)\in\ell_{2}$. Hence, it is contained in the $h$-space $\left\langle\rho,(1,1,1)_{\F_2}\right\rangle=\left\langle\rho,(0,1,1)_{\F_2}\right\rangle$. Furthermore, the $(h-2)$-space $\sigma_{2}$ passes through the $(h-3)$-space $\mu_{12}\cap\mu_{24}=\{(x,0,0)_{\F_2}\mid x\in \F_{q}^\ast, \T(x)=\T(\frac{x}{\alpha\beta})=0\}\subset\rho$. So, there is a $t'\in\F_{q}$ such that $\sigma_{2}=\left\langle(t',1,1)_{\F_2},\mu_{12}\cap\mu_{24}\right\rangle$. Note that $\T(t')=1$ since $\left\langle(t',1,1),(0,0,1)\right\rangle\cap\ell=(t',1,0)$ has to be a point of $\D_{12}$. Denote $\T(\frac{t'}{\alpha\beta})$ by $b$, then, $\A_{2}$ is given by $\{(x,1,1)\mid x\in \F_q, \T(x)=1,\T(\frac{x}{\alpha\beta})=b\}$.
  %\par The $(h-2)$-space $\sigma_{2}$ passes through the $(h-3)$-space $\mu_{12}\cap\mu_{24}=\{(x,0,0)_{\F_2}\mid x\in \F_{q}^\ast, \T(x)=\T(\frac{x}{\alpha\beta})=0\}\subset\rho$, and is contained in the $h$-space $\left\langle\rho,(0,1,1)_{\F_2}\right\rangle$ since $(1,1,1)\in\ell_{2}$. It is necessarily also contained in $\left\langle\mu_{12},(0,0,1)_{\F_2}\right\rangle$. Hence, there is a $t'\in\F_{q}$ with $\T(t')=1$ such that $\sigma_{2}=\left\langle(t',1,1)_{\F_2},\mu_{12}\cap\mu_{24}\right\rangle$. Denote $\T(\frac{t'}{\alpha\beta})$ by $b$. Then, $\A_{2}$ is given by $\{(x,1,1)\mid x\in \F_q, \T(x)=1,\T(\frac{x}{\alpha\beta})=b\}$.
  \par The $(h-2)$-space $\sigma_{3}$ passes through the $(h-3)$-space $\mu_{13}\cap\mu_{34}=\{(x,0,0)_{\F_2}\mid x\in \F_{q}^\ast, \T(\frac{x}{\xi})=\T(\frac{x}{\alpha\gamma})=0\}\subset\rho$, and is contained in the $h$-spaces $\left\langle\mu_{13},(0,0,1)_{\F_2}\right\rangle$ and $\left\langle\mu_{23},(t',1,1)_{\F_2}\right\rangle$. So, the points of $\sigma_{3}\setminus H$ are the points given by the following coordinates for both an $x$ and a $y$ in $\F_{q}$:
  \[
    \left(\gamma \alpha x+\gamma \alpha'\T(x),\gamma\T(x),1\right)_{\F_2}=\left(\frac{\varepsilon\alpha}{\alpha+1}y+\frac{\varepsilon \alpha'}{\alpha+1}\T(y)+t',\varepsilon\T(y)+1,1\right)_{\F_2}\;.
  \]
  Since $\A_{3}$ is not on the lines $\ell_{1}$ or $\ell_{2}$, necessarily $\gamma\T(x)=\varepsilon\T(y)+1\notin\{0,1\}$. Thus $\T(x)=\T(y)=1$, and moreover also $\gamma=\varepsilon+1$. Combining this with previous results $\varepsilon=\frac{(\alpha+1)\beta}{\alpha\beta+1}$, $\delta=\frac{(\alpha+1)\gamma}{\alpha\gamma+1}$ and $\frac{1}{\xi}=1+\frac{1}{\alpha\beta}+\frac{1}{\alpha\gamma}$, we find
  \[
    \gamma=\frac{\beta+1}{\alpha\beta+1}\quad\text{and also}\quad\xi=\a\beta\gamma\quad\text{and}\quad\delta=\beta+1\;.
  \]
  Now, as $\sigma_{3}$ passes through $\mu_{13}\cap\mu_{34}$ we know that the points of $\sigma_{3}\setminus H$ are given by
  \begin{align*}
    &\left\{(z,\gamma,1)_{\F_2}\mid z \in \F_q, \T\left(\frac{z}{\xi}\right)=d,\T\left(\frac{z}{\alpha\gamma}\right)=d'\right\}\\&=\left\{(z,\gamma,1)_{\F_2}\mid z\in \F_q, \T\left(\frac{z}{\xi}\right)=d,\T\left(\frac{z}{\xi}\right)+\T\left(\frac{z}{\alpha\gamma}\right)=d+d'\right\}
  \end{align*}
  for some $d,d'\in\F_{2}$. We find that
  \begin{align*}
    d'&=\T\left(\frac{\gamma \a x+\gamma \alpha'\T(x)}{\alpha\gamma}\right)=\T(x)+\T\left(\frac{\alpha'}{\alpha}\right)=a+1\\
    d+d'&=\T\left(\frac{\alpha+1}{\alpha\varepsilon}\left(\frac{\alpha\varepsilon y+\varepsilon \alpha'}{\alpha+1}+t'\right)\right)=\T(y)+\T\left(\frac{\alpha'}{\alpha}\right)+\T(t')+\T\left(\frac{t'}{\alpha\beta}\right)\\
    &=1+a+1+b=a+b\;.
  \end{align*}
  Hence, $d=b+1$ and the points of $\A_{3}$ are given by $\{(x,\gamma,1)\mid x\in \F_q,\T(\frac{x}{\alpha\gamma})=a+1,\T(\frac{x}{\xi})=b+1\}$.
  \par The $(h-2)$-space $\sigma_{4}$ passes through $\mu_{24}\cap\mu_{34}=\{(x,0,0)_{\F_2}\mid x\in \F_q^\ast, \T(\frac{x}{\xi})=\T(\frac{x}{\alpha\beta})=0\}\subset\rho$ and is contained in the $h$-spaces $\left\langle\mu_{14},(0,0,1)_{\F_2}\right\rangle$ and $\left\langle\mu_{24},(t',1,1)_{\F_2}\right\rangle$. We proceed in the same way as for $\sigma_{3}$. The points of $\sigma_{4}\setminus H$ are the points given by the following coordinates for both an $x$ and a $y$ in $\F_{q}$:
  \[
    \left(\frac{\delta\alpha}{\alpha+1}x+\frac{\delta\alpha'}{\alpha+1}\T(x),(\beta+1)\T(x),1\right)_{\F_2}=\left(\beta \alpha y+\beta \alpha'\T(y)+t',\beta\T(y)+1,1\right)_{\F_2}\;.
  \]
  Since $(\beta+1)\T(x)=\beta\T(y)+1\notin\{0,1\}$, necessarily $\T(x)=\T(y)=1$. Now, as $\sigma_{4}$ passes through $\mu_{24}\cap\mu_{34}$ we know that the points of $\sigma_{4}\setminus H$ are given by
  \begin{align*}
    &\left\{(z,\beta+1,1)_{\F_2}\mid z \in \F_q, \T\left(\frac{z}{\xi}\right)=e,\T\left(\frac{z}{\alpha\beta}\right)=e'\right\}\\&=\left\{(z,\beta+1,1)_{\F_2}\mid z\in \F_q, \T\left(\frac{z}{\alpha\beta}\right)=e',\T\left(\frac{z}{\xi}\right)+\T\left(\frac{z}{\alpha\beta}\right)=e+e'\right\}
  \end{align*}
  for some $e,e'\in\F_{2}$. We find that
  \begin{align*}
    e'&=\T\left(\frac{\beta \alpha y+\beta \alpha'\T(y)+t'}{\alpha\beta}\right)=\T(y)+\T\left(\frac{\alpha'}{\alpha}\right)+\T\left(\frac{t'}{\alpha\beta}\right)=1+a+b\\
    e+e'&=\T\left(\frac{\alpha+1}{\alpha\delta}\left(\frac{\alpha\delta x+\delta \alpha'}{\alpha+1}\right)\right)=\T(x)+\T\left(\frac{\alpha'}{\alpha}\right)=1+a\;.
  \end{align*}  
  We conclude that the points of $\A_{4}$ are given by $\{(x,\beta+1,1)\mid x\in \F_q,\T(\frac{x}{\xi})=b,\T(\frac{x}{\alpha\beta})=a+b+1\}$.
  \par The points of $\A\cap\ell$ are the $\frac{q}{4}$ points of $\ell\setminus\{N\}$ that are not contained in $\D_{12}\cup\D_{13}\cup\D_{14}$. Recall that $\D_{12}=\{(x,1,0)\mid\T(x)=1\}$, $\D_{13}=\{(x,1,0)\mid x\in \F_q, \T(\frac{x}{\alpha})=a+1\}$ and $\D_{14}=\{(x,1,0)\mid x\in\F_q,\T(x)+\T(\frac{x}{\alpha})=a+1\}$. So the points of $\A\cap\ell$ are the points $\{(x,1,0)\mid x\in \F_q, \T(x)=0,\T(\frac{x}{\alpha})=a\}$.
  \par Note that the parameters $\xi$, $\gamma$, $\delta$ and $\varepsilon$ are written in function of $\a$ and $\beta$ in the following way:
  \[
    \gamma=\frac{\beta+1}{\alpha\beta+1},\quad\delta=\beta+1,\quad\varepsilon=\frac{(\alpha+1)\beta}{\alpha\beta+1},\quad\xi=\alpha\beta\gamma\;.
  \]
  The conditions on these parameters that we found during the proof, namely
  \[
    \a,\xi,\beta,\gamma,\delta,\varepsilon\neq0\qquad \a\beta,\a\gamma\neq1\qquad \varepsilon\neq(\alpha+1)\gamma,\;\delta\neq(\alpha+1)\beta \qquad\beta\neq\gamma,\;\xi\neq1\;,
  \]
  can thus be rewritten as conditions on $\a$ and $\beta$. We find that $\a,\beta\notin\{0,1\}$, $\a\beta\neq1$ and $\a\beta^{2}\neq1$. So, indeed $\A$ is $\PGL$-equivalent to a KM-arc given in Theorem \ref{ourconstruction}. In fact, all examples in Theorem \ref{ourconstruction} are described except the ones with $\a\beta^{2}=1$. This finishes the proof.% in case $\A$ has property (II).
\end{proof}

\begin{theorem}\label{case2}
  Let $\A$ be a KM-arc of type $\frac{q}{4}$ in $\PG(2,q)$, with $q=2^{h}$. If $\ell$ is a $\frac{q}{4}$-secant of $\A$ such that $\A$ has property (I) with respect to $\ell$, then $\A$ is $\PGL$-equivalent to an arc given in Theorem \ref{ourconstruction} with $\a\beta^{2}=1$, and hence, is a translation arc with translation line $\ell$.
\end{theorem}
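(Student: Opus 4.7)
The plan is to mimic the proof of Theorem~\ref{case1} step by step, with the modifications dictated by property (I): the $\D_{ij}\cup\{N\}$ are now $(h-2)$-clubs of rank $h-1$ rather than $(h-1)$-clubs of rank $h$, and the intersection condition is tightened to pairwise disjointness. The upshot is that $\A$ will be $\PGL$-equivalent to some $\A_{\a,\beta,a,b}$ of Theorem~\ref{ourconstruction} with $\a\beta^{2}=1$ -- precisely the case excluded from Theorem~\ref{case1} -- after which Theorem~\ref{transliff} immediately gives that $\ell$ is the translation line.

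Concretely, I would set up the same frame as in Theorem~\ref{case1} -- $N=(1,0,0)$, $(0,0,1)\in\S_{1}$, $\ell_{2}:Y=Z$ -- using an analogue of Theorem~\ref{clueq} for $(h-2)$-clubs of rank $h-1$ to put $\D_{12}\cup\{N\}$ in canonical form. Parametrising $\D_{13}\cup\{N\}$ by $\a\in\F_{q}\setminus\{0,1\}$ and $\a'\in\F_{q}$, I would carry out the same sequence of deductions as in the proof of Theorem~\ref{case1}, now working in $\PG(3h-1,2)$ with the $(h-2)$-spaces $\sigma_{i}$ (where $\B(\sigma_{i})=\S_{i}\cup\{N\}$) and the $(h-2)$-spaces $\mu_{ij}=H\cap\left\langle\sigma_{i},\sigma_{j}\right\rangle$. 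The parameter constraints $\gamma=\frac{\beta+1}{\a\beta+1}$, $\delta=\beta+1$, $\xi=\a\beta\gamma$, etc.\ carry over unchanged, and via a trace-identity computation the pairwise disjointness of the $\D_{ij}$'s forces the additional relation $\beta=\gamma$, equivalently $\a\beta^{2}=1$. This identifies $\A$ with the desired subfamily of Theorem~\ref{ourconstruction}, and Theorem~\ref{transliff} at $\a=\frac{1}{\beta^{2}}$ then delivers the translation conclusion.

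The main obstacle is the careful bookkeeping through the dimension-counts of Theorem~\ref{case1} with the smaller $(h-2)$-dimensional subspaces $\mu_{ij}$ in place of $(h-1)$-dimensional ones, and handling the possible non-uniqueness of the $\mu_{ij}$ as subspaces of $H$ representing the $(h-2)$-clubs $\D_{ij}\cup\{N\}$ (a subtlety not arising in Theorem~\ref{case1}, where Lemma~\ref{vers} pins down $(h-1)$-spaces up to the hyperplane condition). The Fano-plane configuration of Lemma~\ref{translationprop1} will reappear naturally at the end as the geometric reflection of the forced identity $\beta=\gamma$, confirming that the four $\sigma_{i}$ jointly span the $h$-space required for $\ell$ to be the translation line.
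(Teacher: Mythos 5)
There is a genuine gap at the very first step of your plan. You propose to normalise $\D_{12}\cup\{N\}$ via ``an analogue of Theorem \ref{clueq} for $(h-2)$-clubs of rank $h-1$'', but no such analogue exists: unlike $(h-1)$-clubs of rank $h$, the $(h-2)$-clubs of rank $h-1$ in $\PG(1,2^h)$ are \emph{not} all $\PGL$-equivalent. Indeed they have the shape $\{(x,1,0)\mid \T(x)=1,\ \T(\beta x)=b\}\cup\{N\}$, and the parameter $\beta$ (up to a six-element orbit) is a genuine invariant --- it is exactly where the $\beta$ of Theorem \ref{ourconstruction} comes from. The paper circumvents this by writing $\D^{N}_{12}=\C\cap\C'$ as the intersection of two $(h-1)$-clubs of rank $h$ with head $N$, normalising only $\C$ by Theorem \ref{clueq} and keeping $\beta,\beta'$ (hence $b$) as free parameters from $\C'$; your normalisation would silently discard the parameter you later need.

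The second claim, that ``the parameter constraints $\gamma=\frac{\beta+1}{\a\beta+1}$, $\delta=\beta+1$, $\xi=\a\beta\gamma$, etc.\ carry over unchanged'', also fails. Under property (I) each $\mu_{ij}=H\cap\langle\sigma_i,\sigma_j\rangle$ is only $(h-2)$-dimensional, so $\mu_{ij}\cap\rho$ is an $(h-3)$-space containing both $\sigma_i\cap\rho$ and $\sigma_j\cap\rho$; this forces \emph{all four} $\sigma_i$ to meet $\rho$ in one common $(h-3)$-space $\tau$, which is the opposite of the case (II) configuration with four distinct spaces $\tau_1,\dots,\tau_4$, and the whole chain of relations you want to import has no counterpart. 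In the paper the constraints come instead from the KM-arc condition on lines joining $\A_1$ and $\A_2$ (unsolvable trace systems, via Lemma \ref{tracesysteemalg}, restricting the slopes $\eta,\theta$ to $\{\beta,\beta+1,\tfrac{1}{\beta},\tfrac{1}{\beta}+1,\tfrac{1}{\beta+1},\tfrac{\beta}{\beta+1}\}$), a separate claim analysing when two distinct $(h-2)$-spaces through $\tau$ induce the same linear set (the $\beta^2+\beta+1=0$ possibility, needed to exclude $\mu_{12}\neq\mu_{34}$ --- the non-uniqueness issue you flag but do not resolve), and a final three-case analysis $\eta\in\{\beta,\tfrac{1}{\beta},\tfrac{1}{\beta+1}\}$ with explicit normalising collineations reducing everything to the form with $\a=\tfrac{1}{\beta^2}$. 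So the conclusion is right, but the route you sketch would not get there without essentially reinventing these steps.
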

\begin{proof}
  Recall that $\T$ is the absolute trace function from $\F_{q}$ to $\F_{2}$. Let $N$ be the nucleus of $\A$. Denote the four $\frac{q}{4}$-secants of $\A$ different from $\ell$ by $\ell_{1},\ell_{2},\ell_{3},\ell_{4}$, and denote $\A\cap\ell_{i}$ by $\A_{i}$ and $\A_{i}\cup\{N\}$ by $\A^{N}_{i}$, $i=1,\dots,4$. We also use the notations
  \[
    \D_{ij}=\{\left\langle P,Q\right\rangle\cap\ell\mid P\in\A_{i},Q\in\A_{j}\}\text{ and }\D^{N}_{ij}=\D_{ij}\cup\{N\}\qquad 1\leq i<j\leq 4\;.
  \]
  Since $\A$ has property (I) we know that $\D_{12}=\D_{34}$ and that $\D^{N}_{12}$ is an $(h-2)$-club of rank $h-1$ with head $N$ in $\ell$. Then we can find two $(h-1)$-clubs of rank $h$ with head $N$, say $\C$ and $\C'$, such that $\D^{N}_{12}=\C\cap\C'$. By Theorem \ref{clueq} we can choose a frame of $\PG(2,q)$ such that $\C$ is the set $\{(x,\T(x),0)\mid x\in\F^{*}_{q}\}$ and such that $(0,0,1)\in\A_{1}$. It follows that $N=(1,0,0)$. By Theorem \ref{clueq} all $(h-1)$-clubs of rank $h$ are equivalent, so $\C'$ is given by $\{(\frac{x}{\beta}+\beta'\T(x),\T(x),0)\mid x\in\F^{*}_{q}\}$ for some $\beta\in\F_{q}\setminus\{0,1\}$ and some $\beta'\in\F_{q}$. Denote $\T(\beta \beta')=b$. Hence, $\D_{12}$ is given by $\{(x,1,0)\mid x\in \F_q, \T(x)=1,\T(\beta x)=b\}$.
  \par Now, let $\D$ be a Desarguesian spread in $\PG(3h-1,2)$, let $H$ be the $(2h-1)$-space such that $\ell=\B(H)$, and let $\rho\in\D$ be the spread element such that $\B(\rho)=N$.
  \par We define $\sigma$ as the $(2h)$-space in $\PG(3h-1,2)$ spanned by $H$ and $(0,0,1)_{\F_2}$. Then there is a unique $(h-2)$-space $ \sigma_{i}\subset\sigma$ such that $\B(\sigma_{i})=\A^{N}_{i}$, $i=1,\dots,4$. For every $i=1,\ldots,4$, the subspace $\sigma_{i}$ meets $\rho$ in an $(h-3)$-space; the existence of these subspaces $\sigma_{i}$ follows from the first condition for KM-arcs of property (I). Let $\mu_{ij}$ be the subspace $H\cap\left\langle\sigma_{i},\sigma_{j}\right\rangle$, for $1\leq i<j\leq 4$. Then, $\D^{N}_{ij}=\B(\mu_{ij})$, for $1\leq i<j\leq 4$ and $\mu_{ij}\subset H$ is an $(h-2)$-space meeting $\rho$ in an $(h-3)$ space, $1\leq i<j\leq4$. However, $\mu_{ij}\cap\rho$ must contain the $(h-3)$-spaces $\sigma_{i}\cap\rho$ and $\sigma_{j}\cap\rho$, so necessarily $\mu_{ij}\cap\rho=\sigma_{i}\cap\rho=\sigma_{j}\cap\rho$. We find that there is an $(h-3)$-space $\tau\subset\rho$ such that $\sigma_{i}\cap\rho=\tau$ for $i=1,\dots,4$, and such that $\tau=\mu_{ij}\cap\rho$ for $1\leq i<j\leq4$.
  \par Let $\mu$ and $\mu'$ be the two $(h-1)$-spaces in $H$ such that $\mu_{12}=\mu\cap\mu'$ and such that $\B(\mu)=\C$ and $\B(\mu')=\C'$. Now, after having fixed the spread $\D$, we still can choose coordinates for $H$ in such a way that $\mu=\{(x,\T(x),0)_{\F_2}\mid x\in\F^{*}_{q}\}$. Then, the space $\mu'$ is given by $\{(\frac{x}{\beta}+\beta'\T(x),\T(x),0)_{\F_2}\mid x\in\F^{*}_{q}\}$. Hence, the points of $\mu_{12}\setminus\tau$ are given by $\{(x,1,0)_{\F_2}\mid x\in \F_q,\T(x)=1,\T(\beta x)=b\}$, and $\tau$ is given by $\{(x,0,0)_{\F_2}\mid x\in \F^{*}_q,\T(x)=\T(\beta x)=0\}$.
  \par The $(h-2)$-space $\sigma_{1}$ is the space $\left\langle\tau,(0,0,1)_{\F_2}\right\rangle$, and hence the points of $\A_{1}$ are given by $\{(x,0,1)\mid\T(x)=\T(\beta x)=0\}$. The subspace $\sigma_{2}$ is the unique $(h-2)$-space through $\tau$ in the $(h-1)$-space $\left\langle\mu_{12},\sigma_{1}\right\rangle$, different from $\mu_{12}$ and $\sigma_{1}$. It follows immediately that $\A_{2}=\{(x,1,1)\mid x\in \F_q,\T(x)=1,\T(\beta x)=b\}$.
  \par For all $(h-2)$-spaces in $H$ meeting $\rho$ in the $(h-3)$-space $\tau$ the points of this space outside $\tau$ are given by $\{(x,\zeta,0)_{\F_2}\mid x\in \F_q, \T(x)=e,\T(\beta x)=e'\}$ for a $\zeta\in\F^{*}_{q}$ and $e,e'\in\F_{2}$. %can be described as $\{(x,\zeta(\T(x)+e+1)(\T(\beta x)+e'+1),0)_{2}\mid x\in\F^{*}_{q}\}$ for a $\zeta\in\F^{*}_{q}$ and $e,e'\in\F_{2}$. Alternatively, we can say that 
  So, there exist $\eta,\theta\in\F^{*}_{q}$ and $d_{3},d'_{3},d_{4},d'_{4}\in\F_{2}$ such that $\mu_{13}=\{(x,\eta,0)_{\F_2}\mid x\in \F_q, \T(x)=d_{3},\T(\beta x)=d'_{3}\}$ and $\mu_{14}=\{(x,\theta,0)_{\F_2}\mid x\in \F_q,\T(x)=d_{4},\T(\beta x)=d'_{4}\}$. It follows that the points of $\sigma_{3}\setminus\tau$ are the points $\{(x,\eta,1)_{\F_2}\mid x\in \F_q,\T(x)=d_{3},\T(\beta x)=d'_{3}\}$ and thus $\A_{3}=\{(x,\eta,1)\mid x\in \F_q,\T(x)=d_{3},\T(\beta x)=d'_{3}\}$. Analogously, $\sigma_{4}\setminus\tau=\{(x,\theta,1)_{\F_2}\mid x\in \F_q,\T(x)=d_{4},\T(\beta x)=d'_{4}\}$ and $\A_{4}=\{(x,\theta,1)\mid x\in \F_q,\T(x)=d_{4},\T(\beta x)=d'_{4}\}$. Necessarily, $1\neq\eta\neq\theta\neq1$.
  \par We consider a line passing through an arbitrary point of $\A_{1}$ and an arbitrary point of $\A_{2}$, the line $\ell_{12}:\left\langle(x,0,1),(y,1,1)\right\rangle$ with $\T(x)=0=\T(\beta x)$ and $\T(y)=1,\T(\beta y)=b$. This line cannot contain a point of $\A_{3}$ since $\A$ is a KM-arc. Hence, the point $((\eta+1)x+\eta y,\eta,1)\in\ell_{12}$ is not a point of $\A_{3}$. So, $\T(\eta(x+y))\neq d_{3}$ or $\T(\eta\beta(x+y))\neq d'_{3}$. Analogously, considering $\A_{4}$ we find that $\T(\theta(x+y))\neq d_{4}$ or $\T(\theta\beta(x+y))\neq d'_{4}$ Consequently, in general the systems of equations
  \begin{align}\label{eq1}
    \begin{cases}
      \T(z)=1\\
      \T(\beta z)=b\\
      \T(\eta z)=d_{3}\\
      \T(\eta \beta z)=d'_{3}
    \end{cases}\quad\text{ and }\quad
    \begin{cases}
      \T(z)=1\\
      \T(\beta z)=b\\
      \T(\theta z)=d_{4}\\
      \T(\theta \beta z)=d'_{4}
    \end{cases}
  \end{align}
  cannot have a solution. We express that this systems of equations cannot have a solution, using Lemma \ref{tracesysteemalg}. We find that $\eta,\theta\in\{\beta,\beta+1,\frac{1}{\beta},\frac{1}{\beta}+1,\frac{1}{\beta+1},\frac{\beta}{\beta+1}\}$ and for each of these choices for $\eta$ or $\theta$ there is a corresponding relation on $d_{3}$ or $d'_{3}$, or on $d_{4}$ or $d'_{4}$, respectively.
  \par \textbf{Claim:} we claim that two different $(h-2)$-spaces in $H$ meeting $\rho$ in $\tau$ cannot give rise to the same linear set in $\B(H)$ unless $\beta^{2}+\beta+1=0$. We look at two arbitrary $(h-2)$-spaces $\nu_{1}$ and $\nu_{2}$ in $H$ meeting $\rho$ in $\tau$, whose points outside $\tau$ are given by $\{(x,\zeta_{1},0)_{\F_2}\mid x\in \F_q,\T(x)=e_{1},\T(\beta x)=e'_{1}\}$ and $\{(x,\zeta_{2},0)_{\F_2}\mid x\in \F_q,\T(x)=e_{2},\T(\beta x)=e'_{2}\}$, respectively. The sets $\B(\nu_{1})\setminus\{N\}$ and $\B(\nu_{2})\setminus\{N\}$ are given by $\{(x,1,0)\mid x\in \F_q,\T(\zeta_{1}x)=e_{1},\T(\zeta_{1}\beta x)=e'_{1}\}$ and $\{(x,1,0)\mid x\in \F_q,\T(\zeta_{2}x)=e_{2},\T(\zeta_{2}\beta x)=e'_{2}\}$, respectively. By Lemma \ref{tracesysteemalg} the sets $\B(\nu_{1})$ and $\B(\nu_{2})$ are equal if and only if either $(\zeta_{1},e_{1},e'_{1})=(\zeta_{2},e_{2},e'_{2})$ or else $\beta^{2}+\beta+1=0$ and $(\zeta_{1},e_{1},e'_{1})\in\{(\beta\zeta_{2},e'_{2},e_{2}+e'_{2}),((\beta+1)\zeta_{2},e_{2}+e'_{2},e_{2})\}$. In the first case we find $\nu_{1}=\nu_{2}$. We conclude that if different $\nu_{1}$ and $\nu_{2}$ determine the same linear set on $\ell$ then $\beta^{2}+\beta+1=0$. We note that such an element $\beta$ only can exist if $\F_{4}$ is a subfield of $\F_{q}$, equivalently if $h$ is even. This finishes the proof of the claim.
  \par  We consider the $(h-2)$-space $\mu_{34}=\left\langle\sigma_{3},\sigma_{4}\right\rangle\cap H$, which is given by $\tau\cup\{(x,\eta+\theta,0)_{\F_2}\mid x\in \F_q,\T(x)=d_{3}+d_{4},\T(\beta x)=d'_{3}+d'_{4}\}$. Now, we distinguish between two cases. First, we assume that $\mu_{12}\neq\mu_{34}$. On the one hand, since $\B(\mu_{12})=\B(\mu_{34})$, we must have by our claim that $\beta^{2}+\beta+1=0$ and $\eta+\theta\in\{\beta,\beta+1\}$ as $\mu_{12}\setminus\tau$ is given by $\{(x,1,0)_{\F_2}\mid x\in \F_q,\T(x)=1,\T(\beta x)=b\}$. On the other hand $\eta,\theta\in\{\beta,\beta+1,\frac{1}{\beta},\frac{1}{\beta}+1,\frac{1}{\beta+1},\frac{\beta}{\beta+1}\}=\{\beta,\beta+1\}$ as solutions of the system of equations in \eqref{eq1}. These two results contradict each other, since it follows from $\eta,\theta\in\{\beta,\beta+1\}$ that $\eta+\theta\in\{0,1\}$.
  \par So, secondly we assume that %$\beta^{2}+\beta+1\neq0$. We consider the $(h-2)$-space $\mu_{34}=\left\langle\sigma_{3},\sigma_{4}\right\rangle\cap H$, which is given by $\tau\cup\{(x,\eta+\theta,0)_{2}\mid\T(x)=d_{3}+d_{4},\T(\beta x)=d'_{3}+d'_{4}\}$. Since $\mu_{12}$ and $\mu_{34}$ determine the same linear set on $\ell$, we know that 
  $\mu_{12}=\mu_{34}$ and hence $\theta=\eta+1$ and $(d_{4},d'_{4})=(d_{3}+1,d'_{3}+b)$. %Now we express that the systems of equations in \eqref{eq1} cannot have a solution, using Lemma \ref{tracesysteemalg}.
  We expressed before that the systems of equations in \eqref{eq1} do not have a solution, and so we find that $\{\eta,\theta\}$ equals $\{\beta,\beta+1\}$, $\{\frac{1}{\beta},\frac{1}{\beta}+1\}$ or $\{\frac{1}{\beta+1},\frac{\beta}{\beta+1}\}$, and for each of these solutions there is a corresponding relation on $d_{3}$ or $d'_{3}$. As $\eta$ en $\theta$ are interchangeable we can choose $\eta\in\{\beta,\frac{1}{\beta},\frac{1}{\beta+1}\}$. We distinguish between these three cases.
  \begin{itemize}
    \item If $\eta=\beta$, then expressing that the system of equations \eqref{eq1} does not have a solution and using Lemma \ref{tracesysteemalg}, it follows that $b=d_{3}+1$. We define $a=d'_{3}+1$ and we find, using $(d_4,d_4')=(d_3+1,d_3'+b)$
    \begin{align}\label{eq2}
      &\A_{1}=\{(x,0,1)\mid x\in \F_q, \T(x)=0,\T(\beta x)=0\}\nonumber\\
      &\A_{2}=\{(x,1,1)\mid x\in \F_q, \T(x)=1,\T(\beta x)=b\}\nonumber\\
      &\A_{3}=\{(x,\beta,1)\mid x\in \F_q,\T(x)=b+1,\T(\beta x)=a+1\}\nonumber\\
      &\A_{4}=\{(x,\beta+1,1)\mid x\in \F_q,\T(x)=b,\T(\beta x)=a+b+1\}\;.
    \end{align}
    \item If $\eta=\frac{1}{\beta}$, then from Lemma \ref{tracesysteemalg} it follows that $d'_{3}=0$. Again using $(d_4,d_4')=(d_3+1,d_3'+b)$, we find
    \begin{align*}
      &\A_{1}=\{(x,0,1)\mid x\in \F_q,\T(x)=0,\T(\beta x)=0\}\\
      &\A_{2}=\{(x,1,1)\mid x\in \F_q,\T(x)=1,\T(\beta x)=b\}\\
      &\A_{3}=\{(x,\frac{1}{\beta},1)\mid x\in \F_q,\T(x)=d_{3},\T(\beta x)=0\}\\
      &\A_{4}=\{(x,\frac{1}{\beta}+1,1)\mid x\in \F_q,\T(x)=d_{3}+1,\T(\beta x)=b\}\;.
    \end{align*}
    Using the transformation matrices $\left(\begin{smallmatrix}\kappa_{1}&0&0\\0&\kappa_{2}&0\\0&0&1\end{smallmatrix}\right)$ with $(\kappa_{1},\kappa_{2})=(1,\beta)$ if $d_{3}=1$, with $(\kappa_{1},\kappa_{2})=(\beta,1)$ if $b=1$, and with $(\kappa_{1},\kappa_{2})=(\beta+1,\frac{\beta}{\beta+1})$ if $b=d_{3}$, and setting $(\beta',a',b')=(\beta,b+1,0)$, $(\beta',a',b')=(\frac{1}{\beta},d_{3}+1,1)$ and $(\beta',a',b')=(\frac{1}{\beta+1},b+1,b+1)$ respectively, we find that the sets $\A_{i}$, $i=1,\dots,4$, are $\PGL$-equivalent to
    \begin{align*}
      &\A'_{1}=\{(x,0,1)\mid x\in \F_q,\T(x)=0,\T(\beta' x)=0\}\\
      &\A'_{2}=\{(x,1,1)\mid x \in \F_q,\T(x)=1,\T(\beta' x)=b'\}\\
      &\A'_{3}=\{(x,\beta',1)\mid x\in \F_q,\T(x)=b'+1,\T(\beta' x)=a'+1\}\\
      &\A'_{4}=\{(x,\beta'+1,1)\mid x\in \F_q,\T(x)=b',\T(\beta' x)=a'+b'+1\}\;.
    \end{align*}
    We find the same sets as in \eqref{eq2} with the parameters $(a,b)$ replaced by $(a',b')$.
    %These sets are clearly $\PGL$-equivalent to the ones given in \eqref{eq2}.
    \item If $\eta=\frac{1}{\beta+1}$, then from Lemma \ref{tracesysteemalg} it follows that $d'_{3}=d_{3}$. Again using $(d_4,d_4')=(d_3+1,d_3'+b)$, we find
    \begin{align*}
      &\A_{1}=\{(x,0,1)\mid x\in \F_q,\T(x)=0,\T(\beta x)=0\}\\
      &\A_{2}=\{(x,1,1)\mid x\in \F_q,\T(x)=1,\T(\beta x)=b\}\\
      &\A_{3}=\{(x,\frac{1}{\beta+1},1)\mid x\in \F_q,\T(x)=d_{3},\T(\beta x)=d_{3}\}\\
      &\A_{4}=\{(x,\frac{\beta}{\beta+1},1)\mid x\in \F_q,\T(x)=d_{3}+1,\T(\beta x)=d_{3}+b\}\;.
    \end{align*}
    Using the transformation matrices $\left(\begin{smallmatrix}\kappa_{1}&0&0\\0&\kappa_{2}&0\\0&0&1\end{smallmatrix}\right)$ with $(\kappa_{1},\kappa_{2})=(1,\beta+1)$ if $d_{3}=1$, with $(\kappa_{1},\kappa_{2})=(\beta,1+\frac{1}{\beta})$ if $d_{3}=b+1$, and with $(\kappa_{1},\kappa_{2})=(\beta+1,1)$ if $b=0$, and setting $(\beta',a',b')=(\beta,b,1)$, $(\beta',a',b')=(\frac{1}{\beta},b,b)$ and $(\beta',a',b')=(\frac{1}{\beta+1},d_{3}+1,1)$ respectively, we find that these sets are $\PGL$-equivalent to
    \begin{align*}
      &\A'_{1}=\{(x,0,1)\mid x\in \F_q,\T(x)=0,\T(\beta' x)=0\}\\
      &\A'_{2}=\{(x,1,1)\mid x\in \F_q,\T(x)=1,\T(\beta' x)=b'\}\\
      &\A'_{3}=\{(x,\beta',1)\mid x\in \F_q,\T(x)=b'+1,\T(\beta' x)=a'+1\}\\
      &\A'_{4}=\{(x,\beta'+1,1)\mid x\in \F_q,\T(x)=b',\T(\beta' x)=a'+b'+1\}\;.
    \end{align*}
    We find the same sets as in \eqref{eq2} with the parameters $(a,b)$ replaced by $(a',b')$.
    %These sets are clearly $\PGL$-equivalent to the ones given in \eqref{eq2}.
  \end{itemize}
  So, in all three cases we may assume $\A_{1}$, $\A_{2}$, $\A_{3}$ and $\A_{4}$ are given by descriptions given in \eqref{eq2}. Now, the sets $\D_{ij}$ are given by:
  \begin{align*}
    \D_{12}=\D_{34}&=\{(x,1,0)\mid x\in \F_q,\T(x)=1,\T(\beta x)=b\}\\
    \D_{13}=\D_{24}&=\{(x,\beta,0)\mid x\in \F_q,\T(x)=b+1,\T(\beta x)=a+1\}\\&=\{(x,1,0)\mid x\in \F_q,\T(\beta x)=b+1,\T(\beta^{2} x)=a+1\}\\
    \D_{14}=\D_{23}&=\{(x,\beta+1,0)\mid x\in \F_q,\T(x)=b,\T(\beta x)=a+b+1\}\\&=\{(x,1,0)\mid x\in \F_q,\T((\beta+1)x)=b,\T((\beta^{2}+\beta)x)=a+b+1\}
  \end{align*}
  The set $\A\cap\ell$ is the set of points on $\ell\setminus\{N\}$, not in $\D_{12}\cap\D_{13}\cap\D_{14}$. By the previous $\A\cap \ell$ is the set $\{(x,1,0)\mid x\in \F_q,\T(x)=0,\T(\beta^{2} x)=a\}$. So, $\A$ is $\PGL$-equivalent to a KM-arc given in Theorem \ref{ourconstruction}, namely with $\a=\frac{1}{\beta^{2}}$, and consequently $\gamma=\beta$. This finishes the proof in case $\A$ has property (I).
\end{proof}

By combining Lemma \ref{translationprop1}, Lemma \ref{vddtranslation} and Theorem \ref{case2} we obtain the following corollary.
\begin{corollary} A KM-arc $\A$ of type $q/4$ is a translation arc with translation line $\ell$ if and only if $\A$ satifies Property (I) with respect to $\ell$. All translation KM-arcs in $\PG(2,q)$ of type $\frac{q}{4}$, including the example of Vandendriessche, can be obtained from Theorem \ref{ourconstruction}.
  %The example of Vandendriessche can be obtained from Theorem \ref{ourconstruction}. idee: clubs nodig, dan ok.
\end{corollary}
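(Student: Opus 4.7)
The plan is to assemble the corollary directly from the three results referenced in the statement, since all the technical work has already been done. Because the corollary is essentially a repackaging of Lemma \ref{translationprop1}, Lemma \ref{vddtranslation} and Theorem \ref{case2}, there is no new obstacle to overcome; the only care needed is to ensure the forward and backward directions of the biconditional are correctly matched to the right cited result.

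First I would prove the biconditional. For the forward direction, I suppose that $\A$ is a translation KM-arc of type $q/4$ with translation line $\ell$; then Lemma \ref{translationprop1} applies verbatim and yields that $\A$ has Property (I) with respect to $\ell$. For the backward direction, I suppose that $\A$ is a KM-arc of type $q/4$ having Property (I) with respect to one of its $q/4$-secants $\ell$. Theorem \ref{case2} then says that $\A$ is $\PGL$-equivalent to a KM-arc $\A_{\alpha,\beta,a,b}$ from Theorem \ref{ourconstruction} with $\alpha\beta^{2}=1$, and the same theorem asserts that such an arc is a translation arc with translation line $\ell$. This closes the equivalence.

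Next I would deduce the second assertion. Let $\A$ be an arbitrary translation KM-arc of type $q/4$ in $\PG(2,q)$, with translation line $\ell$. By the biconditional just established, $\A$ has Property (I) with respect to $\ell$, and hence by Theorem \ref{case2} it is $\PGL$-equivalent to an arc of the form $\A_{\alpha,\beta,a,b}$ produced by Theorem \ref{ourconstruction}. For the Vandendriessche example in particular, Lemma \ref{vddtranslation} shows it is a translation KM-arc, so the same argument places it in the family of Theorem \ref{ourconstruction}.

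The main obstacle is essentially nonexistent at this point, since the hard work lives in Theorem \ref{case2}; the only thing to watch is to invoke the correct direction of the equivalence when identifying the Vandendriessche example, and to note that Theorem \ref{case2} simultaneously gives both the equivalence with the construction of Theorem \ref{ourconstruction} and the fact that such arcs are translation arcs, so no separate verification is needed.
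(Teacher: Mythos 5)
Your proposal is correct and follows exactly the route the paper intends: the paper derives this corollary by combining Lemma \ref{translationprop1} (translation $\Rightarrow$ Property (I)), Theorem \ref{case2} (Property (I) $\Rightarrow$ equivalent to the construction of Theorem \ref{ourconstruction} with $\a\beta^2=1$, hence translation with translation line $\ell$), and Lemma \ref{vddtranslation} for the Vandendriessche example. Your matching of each direction of the biconditional to the corresponding cited result is precisely the intended argument.
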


We now exploit the link between $(h-2)$-clubs of rank $h$ in $\PG(1,q)$, $q=2^h$, and translation KM-arcs of type $q/4$ to obtain a classification of $(h-2)$-clubs of rank $h$.

\begin{corollary} Let $\C$ be an $(h-2)$-club of rank $h$ with head $N$ contained in the line $\ell=\PG(1,q)$, $q=2^h$. Then the set $\ell\setminus \C$ together with the head $N$ is an $(h-2)$-club of rank $h-1$. Moreover, $\C$ is equivalent to the set of points $(1,0)\cup \{(x,1)\mid x\in \F_q,\T(x)=1\lor \T(\varepsilon x)=c \}$ for some $\varepsilon \in \F_q$ and $c\in \F_2$.
\end{corollary}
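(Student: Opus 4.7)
The plan is to exploit the bijection between $(h-2)$-clubs of rank $h$ in $\PG(1,q)$ and translation KM-arcs of type $\frac{q}{4}$ in $\PG(2,q)$ provided by Theorems \ref{translation} and \ref{translationinverse}. Starting from the given club $\C$ with head $N$ on $\ell$, I would embed $\ell$ as the line at infinity of $\PG(2,q)$, identify $\ell = \B(H)$ for a $(2h-1)$-space $H \subset \PG(3h-1,2)$, and let $\mu \subset H$ be the $(h-1)$-space satisfying $\B(\mu) = \C$. Choosing any $h$-space $\pi$ with $\pi \cap H = \mu$, Theorem \ref{translation} yields a translation KM-arc $\A = (\B(\pi)\setminus\C) \cup (\ell\setminus\C)$ of type $\frac{q}{4}$ with translation line $\ell$ and $\frac{q}{4}$-nucleus $N$; in particular $\A \cap \ell = \ell \setminus \C$.

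By Lemma \ref{translationprop1}, $\A$ has property (I) with respect to $\ell$, so Theorem \ref{case2} applies and shows that $\A$ is $\PGL$-equivalent to an arc $\A_{\alpha,\beta,a,b}$ from Theorem \ref{ourconstruction} with $\alpha\beta^{2} = 1$. For such an arc, the intersection with the translation line $Z = 0$ is the set $\S_{0} = \{(z,1,0) \mid z \in \F_q,\ \T(z) = 0,\ \T(z/\alpha) = a\}$. Identifying $\ell$ with $\PG(1,q)$ by forgetting the last coordinate (so that $N$ becomes $(1,0)$), and setting $\varepsilon = 1/\alpha \in \F_q$ and $c = a+1 \in \F_2$, the complement $\ell \setminus \S_{0}$ together with $N$ shows that $\C$ is $\PGL$-equivalent to
\[
  \{(1,0)\} \cup \{(x,1) \mid x \in \F_q,\ \T(x) = 1 \text{ or } \T(\varepsilon x) = c\}\;,
\]
which is the explicit description in the statement.

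For the first assertion, I would verify directly that $(\ell \setminus \C) \cup \{N\}$ is an $(h-2)$-club of rank $h-1$ by producing its defining $\F_2$-subspace in terms of the explicit form above. Note that $\varepsilon \notin \F_{2}$: otherwise the two conditions $\T(x) = 1$ and $\T(\varepsilon x) = c$ would not be $\F_2$-linearly independent and $\C$ would fail to have rank $h$. By Lemma \ref{tracesysteemalg} there exists $x_{0} \in \F_q$ with $\T(x_{0}) = 0$ and $\T(\varepsilon x_{0}) = c+1$, and $V = \{x \in \F_q \mid \T(x) = 0,\ \T(\varepsilon x) = 0\}$ is an $\F_2$-subspace of dimension $h-2$. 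The $(h-1)$-dimensional $\F_2$-subspace
\[
  W = (V \times \{0\}) \oplus \F_{2}\cdot(x_{0},1) \ \subset\ \F_q^{2}
\]
then satisfies $\B(W) = \{N\} \cup (\ell \setminus \C)$: the vectors with second coordinate $0$ form $V \times \{0\}$, contributing the point $N$ with weight $h-2$, while vectors with second coordinate $1$ are $(x_{0}+v,1)$ for $v \in V$, whose first coordinates sweep the affine coset $x_{0}+V = \{x \mid \T(x)=0,\ \T(\varepsilon x)=c+1\}$, so they realise exactly the points of $\ell \setminus \C$, each with weight $1$.

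The principal step is the appeal to Theorem \ref{case2}: once $\A$ is identified with an arc from Theorem \ref{ourconstruction}, the explicit description of $\C$ is simply read off, and the remaining claim that the complement (with $N$) is an $(h-2)$-club of rank $h-1$ reduces to a short $\F_2$-linear algebra check.
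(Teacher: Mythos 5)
Your proposal is correct and follows essentially the same route as the paper: pass from $\C$ to a translation KM-arc via Theorem \ref{translation}, invoke property (I) (Lemma \ref{translationprop1}) and Theorem \ref{case2} to reduce to $\A_{\alpha,\beta,a,b}$ with $\alpha\beta^{2}=1$, and read off the complement on the translation line. Your explicit $\F_2$-subspace $W$ verifying that $(\ell\setminus\C)\cup\{N\}$ is an $(h-2)$-club of rank $h-1$ is a welcome addition of detail that the paper merely asserts, and the observation $\varepsilon\notin\F_2$ (forced by $\varepsilon=\beta^{2}$, $\beta\neq 0,1$, or by the size of $\C$) correctly underpins it.
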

\begin{proof} If $\C$ is an $(h-2)$-club of rank $h$ in $\PG(1,q)$, $q=2^h$, then by Theorem \ref{translation}, $\C$ defines a translation KM-arc $\A$ of type $q/4$ with translation line $\ell$. By Theorem \ref{case2}, $\A$ is $\PGL$-equivalent to a KM-arc obtained by the construction of Theorem \ref{ourconstruction} with $\a=\frac{1}{\beta^{2}}$. This implies that $\A\cap \ell=\{(x,1)\mid x\in\F_q, \T(x)=0,\T(\beta^2 x)=a\}$ for some $\beta\in \F_{2^h}^\ast$ and $a\in \F_2$, which forms together with the point $N$ with coordinates $(1,0)$ an $(h-2)$-club of rank $h-1$.

The set $\A\cap \ell$ is precisely the complement of the set $\C$. Hence, we can describe $\C$ as the set of points $(1,0)\cup \{(x,1)\mid x\in \F_q, \T(x)=1\lor \T(\varepsilon x)=c \}$, for some $\varepsilon \in \F_{2^h}$ and $c\in \F_2$, where we have used that every element of $\F_{2^h}$ is a square.
\end{proof}

%\begin{remark}
%  Voor welke parameters vinden we oude voorbeelden (KM, GW)?
%\end{remark}

\paragraph*{Acknowledgment:} The authors would like to John Sheekey for his help with the computer searches of $i$-clubs and for his help in generalising the example of Theorem \ref{gwthm} to the one of Theorem \ref{iclubgw}. For most of the computer searches we used the GAP package FinInG \cite{fining}.

\end{document}